\newcommand{\ELeq}[2]{\frac{d}{dt}\frac{\partial{#1}}{\partial\dot{#2}}-
	\frac{\partial{#1}}{\partial #2}}
\newcommand{\n}{{\mathbf n}}
\theoremstyle{definition}
\newtheorem{theorem}{Theorem}[section]
\newtheorem{lemma}[theorem]{Lemma}
\newtheorem{proposition}[theorem]{Proposition}
\newtheorem{definition}[theorem]{Definition}
\newtheorem{remark}[theorem]{Remark}
\newtheorem{example}[theorem]{Example}
\DeclareMathOperator{\rank}{rank}
\numberwithin{equation}{section}
\numberwithin{theorem}{section}
\begin{document}
\title{On the Lagrangian Structure of Reduced Dynamics Under Virtual
	Holonomic Constraints}\thanks{A preliminary version of this
	paper has been presented at
	the 9\textsuperscript{th} IFAC Symposium on Nonlinear Control Systems (NOLCOS)~\cite{MohMagCon13_1}.}\thanks{ A. Mohammadi was supported
	in part by the Ontario Graduate Scholarship (OGS) and by the
	Natural Sciences and Engineering Research Council of Canada
	(NSERC).}
\thanks{M. Maggiore was supported by the
	Natural Sciences and Engineering Research Council of Canada
	(NSERC).}
\thanks{M. Maggiore is the corresponding author.}
\author{Alireza Mohammadi}\address{Department of Electrical and Computer
	Engineering, University of Toronto, 10 King's College Road, Toronto,
	ON, M5S 3G4, Canada; alireza.mohammadi@mail.utoronto.ca \& maggiore@ece.utoronto.ca}
\author{Manfredi Maggiore}\sameaddress{1}
\author{Luca Consolini}\address{Dipartimento di Ingegneria
	dell'Informazione, Via Usberti 181/a, 43124 Parma, Italy; lucac@ce.unipr.it}
%
%
\begin{abstract} This paper investigates a class of Lagrangian control systems with $n$
	degrees-of-freedom (DOF) and $n-1$ actuators, assuming that $n-1$
	virtual holonomic constraints have been enforced via feedback, and a
	basic regularity condition holds.  The reduced dynamics of such
	systems are described by a second-order unforced differential
	equation. We present necessary and sufficient conditions under which
	the reduced dynamics are those of a mechanical system with one DOF
	and, more generally, under which they have a Lagrangian structure. In
	both cases, we show that typical solutions satisfying the virtual
	constraints lie in a restricted class which we completely
	characterize. \end{abstract}
%
%
\subjclass{70Q05, 93C10, 93C15, 49Q99}
\keywords{Underactuated Mechanical Systems, Virtual Holonomic Constraints, Inverse Lagrangian Problem}
\maketitle
\section*{Introduction}
A virtual holonomic constraint (VHC) is a relation involving the
configuration variables of a mechanical system that can be made
invariant via feedback control.  VHCs emulate the presence of physical
constraints, and can be used to induce desired behaviours.  An early
manifestation of this idea appeared in the work of Nakanishi {\em et
	al.}~\cite{nakanishi-2000}, where the authors enforced, via feedback
control, a constraint on the angles of an acrobot to induce
pendulum-like dynamics imitating the brachiating motion of an ape.

Over the past decade, the idea of VHC rose to prominence with research
on biped robots by J. Grizzle and collaborators (see,
e.g.,~\cite{PleGriWesAbb03,WesGriKod03,WesGriCheChoMor07,
	CheGriShi08}). In this body of work, VHCs are used to encode
different walking gaits, without requiring the design of
time-dependent reference signals for the robot joints. The authors
show that when a suitable VHC is enforced, the resulting constrained
motion exhibits a stable hybrid limit cycle corresponding to a periodic
walking motion.  A similar idea has been used to make snake robots
follow paths on the plane~\cite{mohammadi2014direction,
	mohammadi2015maneuver}. In this context, the VHC encodes a lateral
undulatory gait whose parameters are dynamically adjusted to control
the velocity vector of the snake in such a way that the centre of mass
converges to a desired path.
In~\cite{ShiPerWit05,ShiRobPerSan06,ShiFreGus10,FreRobShiJoh08}, VHCs
are used to plan repetitive motions in mechanical control systems. In
this context, VHCs are used to aid the selection of closed orbits
corresponding to desired repetitive behaviors, which can then be
stabilized in a variety of ways.

In classical mechanics, a Lagrangian system subject to an ideal
holonomic constraint (one with the property that the constraint forces
do not make work on virtual displacements), gives rise to Lagrangian
reduced dynamics whose Lagrangian function is the restriction of the
unconstrained Lagrangian to the constraint manifold.  It is natural to
ask whether an analogous property holds for Lagrangian {\em control}
systems subject to {\em virtual} holonomic constraints.  This paper
investigates this problem and solves it completely for the specific
setup described below.

{\em Contributions of this paper.}   We consider
Lagrangian control systems with $n$ DOF and $n-1$ controls. We assume
that a regular VHC, $h(q)=0$, of order $n-1$ (the definition will be
given in Section~\ref{sec:Prelim}) has been enforced via feedback
control, and we investigate the resulting reduced dynamics. These are
given by a second-order unforced differential equation of the form
\begin{equation}\label{eq:sys:0}
\ddot s = \Psi_1(s) + \Psi_2(s) \dot s^2,
\end{equation}
where either $(s,\dot s)\in \mathbb{R} \times \mathbb{R}$ or $(s,\dot s) \in \mathbb{S}^1
\times \mathbb{R}$.

This paper presents three main results. In
Theorem~\ref{thm:ILPsolution:part1}, it is shown that when the state
space of the reduced dynamics is $\mathbb{R} \times \mathbb{R}$, the reduced
dynamics {\em always} admit a global mechanical structure, i.e.,
equation~\eqref{eq:sys:0} results from the Euler-Lagrange equation
with a Lagrangian function of the form $L(s,\dot s) = (1/2) M(s) \dot
s^2 - V(s)$, with $M > 0$.  When the state space of the reduced
dynamics is the cylinder $\mathbb{S}^1 \times \mathbb{R}$, a Lagrangian structure
may not exist. In Theorem~\ref{thm:ILPsolution:part2} we give explicit
necessary and sufficient conditions guaranteeing that the reduced
dynamics have a global mechanical structure.  In
Theorem~\ref{thm:ILPsolution:part3} we go one step further, and give
necessary and sufficient conditions under which the reduced dynamics
possess any global Lagrangian structure, possibly not in mechanical
form. A byproduct of Theorems~\ref{thm:ILPsolution:part2}
and~\ref{thm:ILPsolution:part3} is that when the state space
of~\eqref{eq:sys:0} is $\mathbb{S}^1 \times \mathbb{R}$, generically {\em there does
	not exist} a global Lagrangian structure.  In addition to these
results, in Section~\ref{sec:MotionCharac} we characterize the
qualitative properties of trajectories of the reduced dynamics.


{\em Related work.}  The results presented in this paper complement
work in~\cite{Maggiore-2012,Consolini-2012}, in which examples were
given showing that the reduced dynamics may possess stable limit
cycles, therefore ruling out the existence of a Lagrangian structure.
In~\cite{Maggiore-2012} sufficient conditions were provided
guaranteeing the existence of a global mechanical structure, but their
necessity was not investigated and more general Lagrangian structures
were not considered.

The inverse problem of calculus of variations (IPCV) is concerned with
finding conditions under which a system of differential equations can
be derived from a variational principle. Comprehensive historical
surveys regarding this problem can be found in \cite{Santilli-1978,
	TontiSurvey, HandbookGlobA}. We will now give an account of some of the
key findings in this field.  In Section~\ref{sec:InvLagProb} (see
Remark~\ref{rem:lit}) we will comment on the fact that the results of
this paper are not contained in the existing literature.

A special case of IPCV, namely, the inverse problem of Lagrangian
mechanics (IPLM), can be traced back to the seminal work of Sonin in
1886~\cite{sonin1886} and Helmholtz in 1887~\cite{Helmholtz-1887}. The
problem investigated in this paper fits within the IPLM framework.
Helmholtz found necessary conditions (today referred to as the
``Helmholtz conditions,''~\cite{Santilli-1978}) under which a given
system of second-order ordinary differential equations is equivalent
to a set of Euler-Lagrange equations derived from some Lagrangian
function. In 1896, Mayer~\cite{Mayer-1896} showed that the Helmholtz
conditions are sufficient as well for the local existence of a
Lagrangian. The Helmholtz conditions are a mixed set of partial
differential equations and algebraic equations in terms of a set of
unknown functions.  It is noteworthy that if these equations can be
solved for a given system of second-order ODE's, the corresponding
Lagrangian is given by the Tonti-Vainberg integral
formula~\cite{Tonti-1969, Vainberg-1959}. Unfortunately, solving the
equations is a nontrivial task. Indeed, the Helmholtz conditions, as
shown by Henneaux~\cite{Henneaux-1982}, are in general strong and
over-determined in the sense that if these conditions admit a
solution, it will be generally unique. For the case of one DOF systems
(i.e., given by one second-order ODE), Darboux~\cite{Darboux-1894}
solved the IPLM in 1894, showing that such systems are always locally
Lagrangian. In 1941, Douglas~\cite{Douglas-1941} could solve the IPLM
for the case of two DOF.  There was a revival of interest in the IPLM
around the 1980's thanks in part to the monograph by
Santilli~\cite{Santilli-1978}. Using the tools of differential
geometry and global analysis, researchers started to encode the
Helmholtz conditions in geometric framework~\cite{Santilli-1978,
	Tonti-1969, Sarlet-1982,Anderson-1980,Crampin-1981, Crampin-1984,
	Takens-1979,Sarlet-1993,Kru97,Kru15}.  The paper by Saunders
\cite{Saunders-2010} reviews the contributions to IPCV since 1979 to
date.

{\em Relevance of ILP in control of mechanical systems.} The reduced
dynamics studied in this paper describe the behavior of any mechanical
system with $n$ degrees-of-freedom and $n-1$ actuators that is under the
influence of $n-1$ virtual holonomic constraints.  Examples include
the acrobot~\cite{nakanishi-2000}, the
pendubot~\cite{consolini2011swing}, Getz's bicycle
model~\cite{Consolini-2012}, and some planar biped robots in their
swing phase, such as RABBIT with $7$
degrees-of-freedom~\cite{chevallereau2003rabbit}. Solving the ILP for
the reduced dynamics is a crucial building block for later development
of control laws for this class of mechanical systems. Indeed, if the
constrained system is Lagrangian, then as we show in this paper the
generic trajectories of the mechanical system under the influence of
VHCs is a trichotomy of oscillations, rotations, and helices (defined
in Section~\ref{sec:qualitative_properties}). Based on the desired
repetitive behavior that the mechanical system should perform, the
designer can then choose from a plethora of closed orbits resulting
from the Lagrangian structure. In the absence of a Lagrangian
structure, closed orbits may not longer be a generic feature of the
constrained dynamics, making it hard or even impossible to impress a
repetitive behaviour on the mechanical system.

{\em Notation.} We let $\n:=\{1,\ldots, n\}$, and given $x \in \mathbb{R}^n$,
we denote $\|x\|:=(x^\top x)^{1/2}$.  Given $x\in \mathbb{R}$ and $T>0$,
then $[x]_T:= x \mbox{ modulo } T$. The set of real numbers modulo $T$
is denoted by ${[\mathbb{R}]_T}$. Therefore, $[\mathbb{R}]_T=\{[x]_T: x\in \mathbb{R}\}$. The set
$[\mathbb{R}]_T$ can be given the structure of a smooth manifold diffeomorphic to
the unit circle $\mathbb{S}^1 \subset \mathbb{C}$ through the map $[x]_T \mapsto
\exp(i  (2\pi/T) [x]_T)$. Given a function $h:\mathcal{Q}\to \mathbb{R}^{k}$, we define
$h^{-1}(0):=\{q\in\mathcal{Q}: h(q)=0\}$.  Given a smooth manifold
$\mathcal{Q}$, we denote by $T \mathcal{Q}$ its tangent bundle, $T\mathcal{Q}:=\{(p,v_p): p
\in \mathcal{Q}, v_p \in T_p \mathcal{Q}\}$. If $h:\mathcal{Q}_1 \to \mathcal{Q}_2$ is a smooth map
between manifolds, and $p\in \mathcal{Q}_1$, $dh_p: T_p \mathcal{Q}_1 \to T_{h(p)}
\mathcal{Q}_2$ denotes the differential of $h$ at $p$, while $dh: T\mathcal{Q}_1 \to T
\mathcal{Q}_2$ denotes the global differential of $h$, defined as $dh:(p,v_p)
\mapsto (h(p),dh_p(v_p))$. If $h:\mathcal{Q}_1 \to \mathcal{Q}_2$ is a diffeomorphism,
then we say that $\mathcal{Q}_1, \mathcal{Q}_2$ are diffeomorphic, and we write $\mathcal{Q}_1
\simeq \mathcal{Q}_2$. In this case, the global differential $dh: T\mathcal{Q}_1 \to T
\mathcal{Q}_2$ is a diffeomorphism as well (see~\cite[Corollary 3.22]{Lee13}).

\section{Introductory example}\label{sec:introductory_example}

Consider a material particle on a plane with inertial coordinates
$q=[q_1 \ \ q_2]^\top \in \mathbb{R}^2$ and unit mass. Assume the particle
is subject to a planar gravitational central force with centre at
$a=[a_1 \ \ a_2]^\top \in \mathbb{R}^2$. Let the gravitational potential be
given by $P(q)=-1 / \|q - a\|$. Suppose a control force $F= B(q) u$ is
exerted on the particle, with $B(q)= q$, where $u \in \mathbb{R}$ is the
control input. The particle model reads
\begin{equation}\label{eq:particle_model}
\ddot q = -\nabla P(q) + B(q) u.
\end{equation}
This is a Lagrangian control system of the form 
\begin{equation}\label{eq:Lagrangian_control_sys}
\ELeq{\mathcal{L}}{q}=B(q)\tau,
\end{equation}
with $\mathcal{L}(q,\dot q) = (1/2) \|\dot q\|^2 - P(q)$.

Pick $b \in \mathbb{R}^2$ such that $\|b\|<1$, and consider the problem of
constraining the motion of the particle on a unit circle centred at
$b$, which corresponds to enforcing the constraint $h(q) = \|q -
b\|-1=0$ via feedback. Setting $e = h(q)$, we have that, along
trajectories of the particle, 
\[
\ddot e = f(q,\dot q) + \frac{ (q-b)^\top B(q)}{\|q-b\|} u,
\]
where $f$ is a smooth function. On the circle $h^{-1}(0)$, the vectors
$q-b$ and $B(q)=q$ are never orthogonal, so the coefficient of $u$ in
$\ddot e$ is nonzero. In other words, the output function $e = h(q)$
has relative degree two on $h^{-1}(0)$. The input-output linearizing
feedback
\[
u(q,\dot q) = \frac{\|q-b\|}{(q-b)^\top B(q)}[-f(q,\dot q) - k_1
e - k_2 \dot e], \ k_1,k_2>0,
\]
asymptotically stabilizes the zero dynamics manifold $\Gamma =
\{(q,\dot q): h(q)=0, d h_q \dot q =0\}$, therefore enforcing the
constraint $h(q)=0$.

We call the relation $h(q)=0$ a {\em virtual holonomic constraint
	(VHC)}, i.e., a holonomic constraint that does not physically exist,
but which can be enforced via feedback control. We call the zero
dynamics manifold $\Gamma$ the {\em constraint manifold} associated
with the VHC $h(q)=0$, and we call the dynamics of the particle on
$\Gamma$ the {\em reduced dynamics.} In this paper we investigate
conditions under which the reduced dynamics possess a Lagrangian
structure, i.e., there exists a function $L: \Gamma \ni (s,\dot s) \to
\mathbb{R}$ such that the reduced dynamics satisfy the Euler-Lagrange
equation 
\[
\frac{d}{dt} \frac{\partial L}{\partial \dot s} - \frac{\partial
	L}{\partial s} =0.
\]
To derive the reduced dynamics of our particle model subject to the
VHC $h(q)=0$, we multiply both sides of~\eqref{eq:particle_model} by a
left-annihilator of $B$,
\[
B^\perp:= B^\top J, \ \ J=\begin{bmatrix} 0 & 1 \\ -1 & 0
\end{bmatrix},
\]
and evaluate the result on $\Gamma$ by picking a parametrization $q=
\sigma(s)$ of the circle $h^{-1}(0)$ and setting
\[
q = \sigma(s):=b+\begin{bmatrix} \cos s \\ \sin s
\end{bmatrix}, \ \ \dot q = \sigma'
\dot s, \ \ \ddot q =\sigma' \ddot s + \sigma'' \dot s^2.
\]
By so doing, we obtain
\begin{equation}\label{eq:reduced_dyn_particle}
\ddot s = -\frac{B^\top J \,\nabla P}{B^\top J
	\sigma'}\Bigg|_{q = \sigma(s)} -\frac{B^\top J \sigma''}{B^\top J
	\sigma'}\Bigg|_{q = \sigma(s)} \dot s^2.
\end{equation}
For each $s$, the vector $J \sigma'(s)$ is orthogonal to the circle
$h^{-1}(0)$ at $\sigma(s)$, so it is proportional to
$(q-b)|_{q=\sigma(s)}$. Since, on $h^{-1}(0)$, the vectors $B(q)$ and
$q-b$ are never orthogonal, we have that $B^\top J \sigma' \neq 0$,
and so~\eqref{eq:reduced_dyn_particle} has no singularities.

The second-order differential equation~\eqref{eq:reduced_dyn_particle}
describes the reduced dynamics on $\Gamma$. Its state space is the
cylinder ${\cal C}=\{(s,\dot s) \in [\mathbb{R}]_{2\pi} \times \mathbb{R}\}$, which
is diffeomorphic to $\Gamma$ through the diffeomorphism $T : {\cal
	C}\to \Gamma$, $(s,\dot s) \mapsto (\sigma(s), \sigma'(s) \dot
s)$. The results of this paper will show that small variations of the
parameters $a, b$, and of the direction of the vector $B(q)$, have
major effects on the Lagrangian structure of the reduced dynamics, to
the point that the reduced dynamics may not admit a Lagrangian
structure at all. In particular, we distinguish four cases.
\begin{figure}
	\psfrag{1}[c]{\footnotesize $a,b,0$}
	\psfrag{2}[c]{\footnotesize $a,0$}
	\psfrag{3}[c]{\footnotesize $b$}
	\psfrag{4}[c]{\ \footnotesize $0$}
	\psfrag{5}[c]{\footnotesize $b$}
	\psfrag{6}[c]{\footnotesize $a$}
	\psfrag{a}[c]{(a)}
	\psfrag{b}[c]{(b)}
	\psfrag{c}[c]{(c)}
	\psfrag{d}[c]{(d)}
	\psfrag{t}{\small $\theta$}
	\centerline{\includegraphics[width=.99\textwidth]{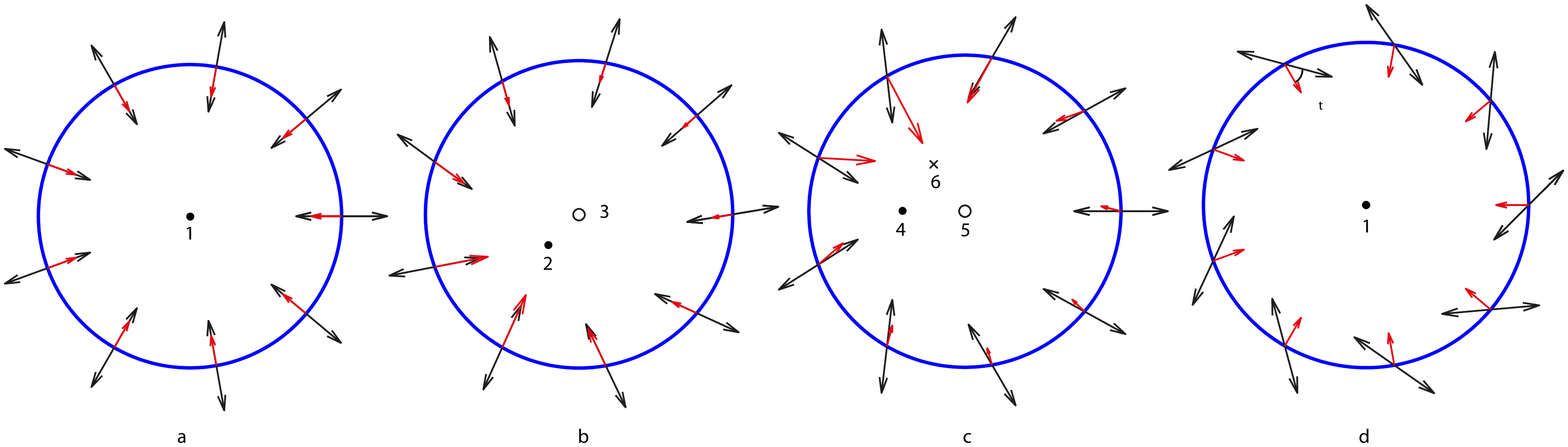}}
	\caption{A material particle immersed in a gravitational field is
		constrained via feedback control to lie on a unit circle. The figure
		depicts four situations corresponding to different values of the
		vectors $a$ and $b$ representing the centre of the gravitational
		field and the centre of the circle.  Black arrows display the
		direction of the control force, while red arrows represent the
		gravitational force. In part (a), the control force is orthogonal to
		the VHC, and the VHC is equivalent to an ideal holonomic
		constraint. The reduced dynamics are Lagrangian and mechanical. In
		part (b), the control force is not orthogonal to the VHC, and the
		VHC is no longer equivalent to an ideal holonomic constraint. Yet,
		the reduced dynamics are still Lagrangian and mechanical. In part
		(c), the reduced dynamics are Lagrangian but not mechanical. In part
		(d), the control force imparts on acceleration on the particle as it
		moves along the circle, and the reduced dynamics are neither
		Lagrangian nor mechanical.}
	\label{fig:point_mass}
\end{figure}

{\bf Case 1:} $a=b=0$. The gravity force and the control force are
parallel to each other, and they are both orthogonal to the circle
$h^{-1}(0)$. See Figure~\ref{fig:point_mass}(a). The gravity force is
compensated by the control force, and it does not affect the reduced
dynamics.  Moreover, the work of the control force $F$ on virtual
displacements $\xi \in T_q h^{-1}(0)$ is identically zero.  Thus, the
VHC $h(q)=0$ is analogous to a holonomic constraint satisfying the
Lagrange-d'Alembert principle of classical mechanics
(see~\cite{ArnoldClassicalMech}). In mechanics, such holonomic
constraint is said to be {\em ideal.} In this setting, we expect the
reduced dynamics to be Lagrangian and, indeed, the reduced
motion~\eqref{eq:reduced_dyn_particle} is $\ddot s=0$, which is a
Lagrangian mechanical system with Lagrangian function $L(s,\dot s) =
(1/2) \dot s^2$. Modulo a constant, this function can be obtained by
restricting the original Lagrangian $\mathcal{L}$ on $\Gamma$, i.e., $L(s,\dot
s) = \mathcal{L}(q,\dot q)\big|_{q=\sigma(s),\dot q=\sigma'(s)\dot s} +
c$. This is precisely what happens in mechanics with ideal holonomic
constraints.

{\bf Case 2:} $a=0$, $b\neq 0$.  The gravity force is parallel
to the control force, but the control force is no longer orthogonal to
the circle $h^{-1}(0)$. See Figure~\ref{fig:point_mass}(b). Now the
work of the control force on virtual displacements $\xi \in T_q
h^{-1}(0)$ is not zero, so one can no longer draw an analogy between
the VHC $h(q)=0$ and an ideal holonomic constraint.  Nonetheless, the
results of this paper will show that the reduced dynamics are a
Lagrangian mechanical system with Lagrangian function $L(s,\dot s) =
(1/2) M(s)\dot s^2$, for a suitable smooth function $M : [\mathbb{R}]_{2\pi}
\to \mathbb{R}$. Since the control force makes work on virtual displacements,
it is no longer true that $L(s,\dot s) = \mathcal{L}(q,\dot
q)\big|_{q=\sigma(s),\dot q=\sigma'(s)\dot s} + c$.

{\bf Case 3:} $a,b \neq 0$. Now the gravity force is no
longer parallel to the control force, and the control force is not
orthogonal to the circle $h^{-1}(0)$.  See
Figure~\ref{fig:point_mass}(c). In this case, the gravity force
affects the reduced dynamics, and the work of the control force on
virtual displacements $\xi \in T_q h^{-1}(0)$ is not zero.  We will
see that for certain values of $a,b$, the reduced dynamics are
Lagrangian, but not mechanical. In other words, the Lagrangian
function of the reduced dynamics cannot be written in the form kinetic
minus potential energy. We will also see that the qualitative
properties of the reduced motion are drastically different than in
cases 1 and 2.

{\bf Case 4:} $a=b=0$, $B(q) = R_\theta\, q$, where $R_\theta$
is a counter-clockwise planar rotation by angle $\theta \in
(-\pi/2,\pi/2), \theta \neq 0$.  See
Figure~\ref{fig:point_mass}(d). In this case, the gravity force is
orthogonal to the circle $h^{-1}(0)$ and it does not affect the
reduced dynamics, while the control force has a constant angle
$\theta$ to the normal vector to the circle.  We shall show that the
reduced dynamics are not Lagrangian.

The example of a material particle on a plane illustrates that the
reduced dynamics induced by VHCs can exhibit very different properties
than the dynamics of a mechanical system subject to a holonomic
constraint.  A number of questions arise in this context:
\begin{enumerate}[Q1]
	\item When are the reduced dynamics Lagrangian and mechanical (i.e.,
	such that the Lagrangian has the form $L=T-V$)?
	
	\item When are the reduced dynamics Lagrangian but not mechanical?
	
	\item Can one expect a Lagrangian structure to exist generically for
	the reduced dynamics, or rather, is it an exceptional property?
	
	\item When a Lagrangian structure exists, what qualitative properties
	can one expect for the reduced dynamics?
\end{enumerate}

This paper will provide answers to these questions. We will return
to the particle example in Section~\ref{sec:examples}.


\section{Preliminaries on Virtual Holonomic Constraints}
\label{sec:Prelim}

In order to generalize the setup of the example in
Section~\ref{sec:introductory_example}, and to introduce the notions
needed to formulate the inverse Lagrangian problem, in this section we
review  basic material taken from~\cite{Maggiore-2012}. Consider a
Lagrangian control system with $n$ DOF and $n-1$ actuators modelled as
\[
\ELeq{\mathcal{L}}{q}=B(q)\tau.
\]
In the above, $q = (q_1, \ldots,q_n) \in \mathcal{Q}$ is the configuration
vector. We assume that each component $q_i$, $i \in \n$, is either a
linear displacement in $\mathbb{R}$, or an angular displacement in
$[\mathbb{R}]_{T_i}$, for some $T_i >0$ (often, $T_i$ is equal to $2 \pi$).
With this assumption, the configuration manifold $\mathcal{Q}$ is a
generalized cylinder, and $T\mathcal{Q}$ is the Cartesian product $T\mathcal{Q} = \mathcal{Q}
\times \mathbb{R}^n$. The term $B(q) \tau$ represents external forces
produced by the control vector $\tau \in \mathbb{R}^{n-1}$. We assume that
$B:\mathcal{Q} \to \mathbb{R}^{n\times (n-1)}$ is smooth and $\rank B(q)=n-1$ for all
$q \in \mathcal{Q}$. Further, the function $\mathcal{L}: T \mathcal{Q} \to \mathbb{R}$ is assumed to
be smooth and to have the special form
$\mathcal{L}(q,\dot{q})=\frac{1}{2}\dot{q}^TD(q)\dot{q}-P(q)$, where $D(q)$,
the generalized mass matrix, is symmetric and positive definite for
all $q\in\mathcal{Q}$. We will assume that there exists a left annihilator of
$B$ on $Q$. That is to say, there exists a smooth function
$B^\perp:\mathcal{Q}\to\mathbb{R}^{1\times n}$ which does not vanish and is such that
$B^\perp(q)B(q)=0$ on $\mathcal{Q}$. With the above mentioned assumptions, the
Lagrangian control system takes on the following standard form
\begin{equation}
\label{eq:ELsystem}
D(q)\ddot{q}+C(q,\dot{q})\dot{q}+\nabla P(q)= B(q)\tau.
\end{equation}
\begin{definition}[\cite{Maggiore-2012}]
	\label{def:VHC}
	A {\bf virtual holonomic constraint (VHC)} of order $n-1$ for system
	\eqref{eq:ELsystem} is a relation $h(q)=0$, where
	$h:\mathcal{Q}\to\mathbb{R}^{n-1}$ is a smooth function which has a regular value at
	$0$, i.e., $\mbox{rank}(dh_q)=n-1$ for all $q\in
	h^{-1}(0)$, and is such that the set
	\begin{equation}
	\label{eq:ConstManifold}
	\Gamma = \{(q,\dot{q}): h(q)=0,\,dh_q\dot{q}=0\}
	\end{equation}
	\noindent is controlled invariant. That is to say, there exists a
	smooth feedback $\tau: \Gamma \to \mathbb{R}^{n-1}$ such that $\Gamma$ is
	positively invariant for the closed-loop system. The set $\Gamma$ is
	called the {\bf constraint manifold} associated with $h(q)=0$. A
	VHC is said to be {\bf stabilizable} if there exists a smooth feedback
	$\tau(q,\dot{q})$ that asymptotically stabilizes $\Gamma$. Such a
	stabilizing feedback is said to {\bf enforce the VHC} $h(q)=0$.
\end{definition}

Since, for each $q \in h^{-1}(0)$, the set of velocities $\{\dot q \in
\mathbb{R}^n : dh_q \dot q=0\}$ is the tangent space $T_q h^{-1}(0)$, it
follows that the constraint manifold $\Gamma$ is the tangent bundle of
$h^{-1}(0)$, $\Gamma = T h^{-1}(0)$. Therefore, the controlled
invariance of $\Gamma$ in Definition~\ref{def:VHC} means that if $q(0)
\in h^{-1}(0)$ and $\dot q(0) \in T_{q(0)} h^{-1}(0)$, then
through the application of a suitable smooth feedback, the
configuration trajectory $q(t)$ can be made to satisfy the VHC
$h(q)=0$ for all $t\geq 0$.

By the preimage theorem~\cite{GuiPol:74}, if $h(q)=0$ is a VHC of
order $n-1$, then the set $h^{-1}(0)$ is a one-dimensional
embedded submanifold of $\mathcal{Q}$. Therefore, $h^{-1}(0)$ is a regular
curve without self-intersections which is diffeomorphic to either the
real line $\mathbb{R}$ or the unit circle $\mathbb{S}^1$.
\begin{definition}[\cite{Maggiore-2012}]
	A relation $h(q)=0$, where $h : \mathcal{Q} \to \mathbb{R}^{n-1}$ is a smooth
	function, is a {\bf regular VHC} of order $n-1$
	for~\eqref{eq:ELsystem} if system~\eqref{eq:ELsystem} with output
	function $e=h(q)$ has well-defined vector relative degree
	$\{2,\dots,2\}$ everywhere on the constraint manifold given
	in~\eqref{eq:ConstManifold}.
\end{definition}

A regular VHC is a VHC. Indeed, the condition that the output function
$e =h(q)$ has vector relative degree $\{2,\ldots,2\}$ implies
(see~\cite{Isi95}) that $\rank(dh_q)=n-1$ for all $q \in
h^{-1}(0)$. Moreover, the zero dynamics manifold exists and it
coincides with $\Gamma$, implying that $\Gamma$ is controlled
invariant.  Regular VHCs enjoy two important properties.  First, under
mild assumptions (see~\cite{Maggiore-2012}), regular VHCs are
stabilizable by input-output feedback linearizing feedback. Indeed, we
have $\ddot e = \mu(q,\dot q) + A(q) u$, where
\[
\mu(q,\dot{q}):= -dh_qD^{-1}(q)[C(q,\dot{q})\dot{q}+\nabla P(q)]+ 
\mathcal{H}h(q,\dot{q}), 
\] 
where $\mathcal{H}h(q,\dot{q})=[\dot{q}^\top
\mbox{Hess}(h_1(q))\dot{q},\dots,\dot{q}^\top
\mbox{Hess}(h_{n-1}(q))\dot{q}]^\top$, and $\mbox{Hess}(h_i(q))$ is
the Hessian matrix of $h_i$ at $q$, and
\[
A(q):= dh_qD^{-1}(q)B(q).
\]
The matrix $A(q)$ is the decoupling matrix associated with the output
function $e =h(q)$. The regularity of the VHC $h(q)=0$ implies that
$A(q)$ is invertible for all $q \in \Gamma$ and therefore, by
continuity, it is also invertible in a neighbourhood of $\Gamma$.  The
input-output feedback linearizing controller
\begin{equation}
\label{eq:feedbackVHC}
\tau(q,\dot{q}) = A^{-1}(q)[-\mu(q,\dot{q})-k_1e-k_2\dot{e}], \ k_1,k_2>0,
\end{equation}
yields $\ddot e + k_2 \dot e + k_1 e =0$, so that $(e,\dot e)=(0,0)$
is an asymptotically stable equilibrium.  Under mild
assumptions~\cite{Maggiore-2012}, this property implies that $\Gamma$
is asymptotically stable.

The second useful property of regular VHCs is that they induce
well-defined reduced dynamics. Specifically, the dynamics on $\Gamma$
(i.e., the zero dynamics associated with the output $e=h(q)$) are
given by a second-order unforced system. In order to find the reduced
dynamics, we follow a procedure presented in~\cite{Dame-2012a}. We
first pick a regular parametrization $\sigma:\Theta \to \mathcal{Q}$ of the
curve $h^{-1}(0)$, where $\Theta = \mathbb{R}$ if $h^{-1}(0) \simeq \mathbb{R}$,
while $\Theta = [\mathbb{R}]_T$, $T>0$, if $h^{-1}(0) \simeq \mathbb{S}^1$. The map
$\sigma:\Theta \to \sigma(\Theta) = h^{-1}(0)$ is a
diffeomorphism. Therefore, the global differential $d\sigma: T \Theta
\to T h^{-1}(0)$, $(s,\dot s) \mapsto (\sigma(s),\sigma'(s)\dot s)$ is
a diffeomorphism as well. Since, as we argued earlier, $Th^{-1}(0) =
\Gamma$, we conclude that $T \Theta \simeq \Gamma$. Next,
multiplying~\eqref{eq:ELsystem} on the left by $B^\perp(q)$ we obtain
\[
B^\perp D \ddot q + B^\perp (C \dot q + \nabla P) =0.
\]
The dynamics on $\Gamma$ are found by restricting the above equation
to $\Gamma$. To this end, we use the fact that $d \sigma : T\Theta \to
\Gamma$ is a diffeomorphism, and we let $q = \sigma(s)$, $\dot
q=\sigma'(s) \dot s$, and $\ddot q = \sigma'(s) \ddot s + \sigma''(s)
\dot s^2$. By so doing, we obtain
\begin{equation}\label{eq:sys}
\ddot s = \Psi_1(s) + \Psi_2(s) \dot s^2,
\end{equation}
where 
\[
\Psi_1(s) = -\left.\frac{B^\perp\nabla P}{B^\perp D\sigma'}\right|_{q=\sigma(s)},
\]
\[
\Psi_2(s) = -\left.\frac{B^\perp
	D\sigma''+\sum_{i=1}^{n}B^\perp_i\sigma'{}^\top Q_i\sigma'}{B^\perp
	D\sigma'}\right|_{q=\sigma(s)},
\]
and where $B^\perp_i$ is the $i$\textsuperscript{th} component of
$B^\perp$ and
$(Q_i)_{jk}=1/2(\partial_{q_k}D_{ij}+\partial_{q_j}D_{ik}-\partial_{q_i}D_{kj})$.

The unforced autonomous system~\eqref{eq:sys} represents the
reduced dynamics of system~\eqref{eq:ELsystem} when the regular VHC of
order $n-1$, $h(q)=0$, is enforced.  The state space
of~\eqref{eq:sys} is $T\Theta = \Theta \times \mathbb{R}$ which, as we
have seen, is diffeomorphic to $\Gamma$. The set $T \Theta$ is a plane
if $h^{-1}(0) \simeq \mathbb{R}$, and a cylinder if $h^{-1}(0)$ is a Jordan
curve.  The reduced dynamics for the material particle example in
Section~\ref{sec:introductory_example} have precisely the
form~\eqref{eq:sys}.

\section{Main Results}
\label{sec:InvLagProb}

In this section we formulate and solve the main problem investigated
in this paper for a two-dimensional system of the form~\eqref{eq:sys},
with state space $\mathcal{X} = T\Theta$, with $\Theta = \mathbb{R}$ or $[\mathbb{R}]_T$,
$T>0$. The functions $\Psi_i : \Theta \to \mathbb{R}$, $i=1,2$, are assumed
to be smooth. We begin by defining precisely the Lagrangian structures
under consideration.
\begin{definition}\label{def:EL} 
	System~\eqref{eq:sys} is said to be:
	
	\begin{enumerate}[(a)]
		
		\item {\bf Euler-Lagrange (EL) with Lagrangian $\mathbf{L}$} if there
		exists a smooth Lagrangian function $L: \mathcal{X} \to \mathbb{R}$ such that the
		following two properties hold:
		\begin{enumerate}[(i)]
			
			\item The Lagrangian $L$ is nondegenerate, i.e., $ \partial^2 L /
			\partial \dot s^2 >0$ for all $(s,\dot s) \in \mathcal{X}$.
			
			\item All solutions $(s(t),\dot s(t))$ of~\eqref{eq:sys} satisfy the
			Euler-Lagrange equation
			\begin{equation}\label{eq:EL}
			\frac{d}{dt} \frac{\partial L}{\partial \dot s}(s(t),\dot s(t)) -
			\frac{\partial L}{\partial s}(s(t),\dot s(t)) =0
			\end{equation}
			for all $t$ in their maximal interval of definition. 
		\end{enumerate}

		\item {\bf Mechanical} if it is EL with Lagrangian $L(s,\dot s) =
		(1/2) M(s) \dot s^2 - V(s)$, where $M : \Theta\to (0,\infty)$, $V
		:\Theta \to \mathbb{R}$ are smooth.
		
		\item {\bf Singular Euler-Lagrange (SEL) with Lagrangian $\mathbf{L}$}
		if there exists a smooth Lagrangian function $L: \mathcal{X} \to \mathbb{R}$ such
		that property~(ii) of part (a) holds. Moreover, if $L$ is any
		function satisfying property~(ii) of part (a) and such that $\partial^2 L / \partial
		\dot s^2$ is not identically zero, then
		\begin{enumerate}[(i)$'$]
			
			\item $L$ is degenerate, i.e., $\partial^2 L / \partial \dot s^2$
			has zeros.
		\end{enumerate}
	\end{enumerate}
\end{definition}
\begin{remark}\label{rem:EL}
	It is well-known that EL systems with Lagrangian $L$ are Hamiltonian
	with Hamiltonian function given by the Legendre transform of $L$ (see,
	e.g.,~\cite{ArnoldClassicalMech}).  On the other hand, while SEL
	systems have a Lagrangian structure, they are generally not
	Hamiltonian because the Legendre transform of $L$ may not be
	well-defined. Moreover, SEL systems are not mechanical since, by
	definition, $\partial^2 L / \partial \dot s^2 = M(s) >0$ for a
	mechanical system.  If $L$ is the Lagrangian of an EL system of the
	form~\eqref{eq:sys}, the Euler-Lagrange equation~\eqref{eq:EL} defines
	a smooth vector field on $\mathcal{X}$ which coincides with~\eqref{eq:sys}.
	Indeed, requirement (i) in Definition~\ref{def:EL}(a) ensures that the
	coefficient of $\ddot s$ in~\eqref{eq:EL} is not zero, and
	therefore~\eqref{eq:EL} defines a smooth vector field on $\mathcal{X}$.
	Moreover, by uniqueness of solutions of~\eqref{eq:sys} and requirement
	(ii) in Definition~\ref{def:EL}(a), the local phase flow of this
	vector field must coincide with the local phase flow
	of~\eqref{eq:sys}. Hence, the vector field arising from~\eqref{eq:EL}
	must coincide with~\eqref{eq:sys}.  On the other hand, we will show in
	the proof of Proposition~\ref{prop:SEL} (see Remark~\ref{rem:SEL})
	that, for a SEL system, the Euler-Lagrange equation~\eqref{eq:EL}
	gives rise to the equation
	\[
	\alpha(s,\dot s) \left[\ddot  s - \Psi_1(s) - \Psi_2(s)\dot s^2\right]=0,
	\]
	where $\alpha$ is a smooth function with zeros. It follows from this
	identity that the Euler-Lagrange equation does not give rise to a
	well-defined vector field on $\mathcal{X}$, and the collection of its
	solutions contains, but is not equal to the
	collection of solutions of~\eqref{eq:sys}. We will illustrate this
	fact with an example in Section~\ref{sec:examples}.  Finally, we
	remark that the requirement, in Definition~\ref{def:EL}(c), that
	$\partial^2 L / \partial \dot s^2$ is not identically zero guarantees
	that the Euler-Lagrange equation~\eqref{eq:EL} gives rise to a
	second-order differential equation.
\end{remark}
%
%
%
%
%
%
%
%
%
%
%
%

{\sc \bf Inverse Lagrangian Problem (ILP).}  {\em Find necessary and
	sufficient conditions under which system~\eqref{eq:sys} is,
	respectively, EL, mechanical, or SEL.}

In order to present the solution of ILP, we let $\tilde \Psi_i: \mathbb{R}
\to \mathbb{R}$, $i=1,2$, be defined as $\tilde \Psi_i(x) :=\Psi_i([x]_T)$,
and we define the {\bf virtual mass} $\tilde M :\mathbb{R} \to (0,\infty)$
and {\bf virtual potential} $\tilde V : \mathbb{R} \to \mathbb{R}$ as
\begin{equation}
\label{eq:M_V}
\begin{aligned}
& \tilde{M}(x) =
\exp\Big(-2\int_0^{x}\tilde{\Psi}_2(\tau)\,d\tau\Big), \\
& \tilde{V}(x)= -\int_0^{x}\tilde{\Psi}_1(\tau)\tilde{M}(\tau)\,d\tau.
\end{aligned}
\end{equation}
We now present the main results of this paper.
\begin{theorem}[{\bf Solution to ILP - Part 1}]\label{thm:ILPsolution:part1}
	If $\Theta = \mathbb{R}$, then system~\eqref{eq:sys} with state space $\mathcal{X} =
	T \Theta$ is mechanical, with $M = \tilde M$ and $V=\tilde V$, where
	$\tilde M,\tilde V$ are defined in~\eqref{eq:M_V}.
\end{theorem}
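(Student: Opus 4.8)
The plan is to verify directly that the mechanical Lagrangian $L(s,\dot s)=\frac12 M(s)\dot s^2 - V(s)$, with $M=\tilde M$ and $V=\tilde V$ as in~\eqref{eq:M_V}, satisfies both requirements of Definition~\ref{def:EL}(b). When $\Theta=\mathbb{R}$ we have $[x]_T = x$, so $\tilde\Psi_i = \Psi_i$ and the integrals in~\eqref{eq:M_V} are genuine globally defined functions on $\mathbb{R}$ (no periodicity obstruction arises). Nondegeneracy is immediate: since $\tilde M(x)=\exp(-2\int_0^x \Psi_2)$ is an exponential, it is everywhere strictly positive, so $\partial^2 L/\partial\dot s^2 = M(s) > 0$ on all of $\mathcal{X}$.

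The core of the proof is to compute the Euler-Lagrange equation~\eqref{eq:EL} for this $L$ and show it reproduces~\eqref{eq:sys}. I would compute
\[
\frac{\partial L}{\partial \dot s} = M(s)\dot s, \qquad
\frac{d}{dt}\frac{\partial L}{\partial \dot s} = M(s)\ddot s + M'(s)\dot s^2,
\]
and
\[
\frac{\partial L}{\partial s} = \tfrac12 M'(s)\dot s^2 - V'(s),
\]
so that~\eqref{eq:EL} reads $M(s)\ddot s + \tfrac12 M'(s)\dot s^2 + V'(s) = 0$. Dividing by $M(s)>0$ gives $\ddot s = -\tfrac12 (M'/M)\dot s^2 - V'/M$. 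The task is then to check that the defining formulas~\eqref{eq:M_V} make these coefficients equal $\Psi_2(s)$ and $\Psi_1(s)$ respectively. Differentiating $\tilde M$ yields $M'(s) = -2\Psi_2(s) M(s)$, hence $-\tfrac12 M'/M = \Psi_2(s)$, matching the $\dot s^2$ coefficient. Differentiating $\tilde V$ yields $V'(s) = -\Psi_1(s) M(s)$, hence $-V'/M = \Psi_1(s)$, matching the remaining term. Thus the Euler-Lagrange equation is exactly~\eqref{eq:sys}.

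It remains to confirm requirement (ii) in full, namely that \emph{every} solution of~\eqref{eq:sys} satisfies~\eqref{eq:EL}. This follows because, as just shown, for $L$ nondegenerate the Euler-Lagrange equation is equivalent (after dividing by $M>0$) to~\eqref{eq:sys} pointwise on $\mathcal{X}$; by the argument recorded in Remark~\ref{rem:EL}, the vector field defined by~\eqref{eq:EL} coincides with~\eqref{eq:sys}, and by uniqueness of solutions the two systems have identical trajectories on their maximal intervals. Since $M$ and $V$ are smooth (being integrals of the smooth $\Psi_i$), $L$ is smooth, completing the verification.

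I do not expect a genuine obstacle in this case: the point of restricting to $\Theta=\mathbb{R}$ is precisely that the antiderivatives defining $\tilde M$ and $\tilde V$ exist globally and single-valuedly, so the construction is an unconditional reverse-engineering of $L$ from the known coefficients. The only place where care is warranted is observing \emph{why} this works on $\mathbb{R}$ but fails on the cylinder: on $[\mathbb{R}]_T$ the functions $\tilde M,\tilde V$ built by integrating from a basepoint need not descend to well-defined periodic functions on $\Theta$, which is exactly the gap that Theorems~\ref{thm:ILPsolution:part2} and~\ref{thm:ILPsolution:part3} address. For the present statement, no periodicity condition is needed, so the verification above suffices.
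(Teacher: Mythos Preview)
Your proposal is correct and follows exactly the paper's approach: the paper's proof consists of a single sentence stating that ``by straightforward computation, the Euler-Lagrange equation with Lagrangian $L(s,\dot s) = (1/2)\tilde M(s)\dot s^2 - \tilde V(s)$ produces equation~\eqref{eq:sys},'' and you have simply written out that computation in full, together with the nondegeneracy check $M>0$.
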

\begin{proof}
	By straightforward computation, the Euler-Lagrange equation with
	Lagrangian $L(s,\dot s) = (1/2) \tilde M(s) \dot s^2- V(s)$ produces
	equation~\eqref{eq:sys}.  \qquad\end{proof}
\begin{remark}
	In~\cite{ShiPerWit05,ShiRobPerSan06}, the authors presented an
	integral of motion for a system of the form~\eqref{eq:sys} which is
	similar to the total energy $E_0(s,\dot s) = (1/2) \tilde M(s) \dot
	s^2 + \tilde V(s)$, but depends on initial conditions.
\end{remark}
\begin{theorem}[{\bf Solution to ILP - Part 2}]\label{thm:ILPsolution:part2}
	If $\Theta = [\mathbb{R}]_T$, then the following statements about
	system~\eqref{eq:sys} with state space $\mathcal{X} = T \Theta$ are
	equivalent:
	
	\begin{enumerate}[(i)]
		
		\item System~\eqref{eq:sys} is EL.
		
		\item System~\eqref{eq:sys} is mechanical.
		
		\item The functions $\tilde M$ and $\tilde V$ in~\eqref{eq:M_V} are
		$T$-periodic. 
	\end{enumerate}
	
	Moreover, if~\eqref{eq:sys} is EL, then the Lagrangian function $L : T
	[\mathbb{R}]_T \to \mathbb{R}$ is given by $L(s,\dot s) = (1/2) M(s) \dot s^2 -
	V(s)$, where $M:[\mathbb{R}]_T \to (0,\infty)$ and $V: [\mathbb{R}]_T \to \mathbb{R}$ are
	the unique smooth functions such that $\tilde M = M \circ \pi$ and
	$\tilde V = V \circ \pi$.
\end{theorem}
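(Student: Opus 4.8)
The plan is to establish the cycle of implications (ii)$\Rightarrow$(i)$\Rightarrow$(iii)$\Rightarrow$(ii), isolating the only substantial step, (i)$\Rightarrow$(iii), for the end. The implication (ii)$\Rightarrow$(i) is immediate: a mechanical Lagrangian $L=(1/2)M(s)\dot s^2-V(s)$ with $M>0$ satisfies $\partial^2 L/\partial\dot s^2=M>0$, so it is nondegenerate and hence EL in the sense of Definition~\ref{def:EL}. For (iii)$\Rightarrow$(ii), I would use that if $\tilde M,\tilde V$ are $T$-periodic then, since the projection $\pi:\mathbb{R}\to[\mathbb{R}]_T$, $x\mapsto[x]_T$, is a smooth covering, there are unique smooth functions $M:[\mathbb{R}]_T\to(0,\infty)$ and $V:[\mathbb{R}]_T\to\mathbb{R}$ with $\tilde M=M\circ\pi$ and $\tilde V=V\circ\pi$. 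Lifting $L=(1/2)M\dot s^2-V$ to $T\mathbb{R}$ and invoking the computation in the proof of Theorem~\ref{thm:ILPsolution:part1} shows that the Euler--Lagrange equation of this $L$ is exactly~\eqref{eq:sys} on the cylinder; since $M=\tilde M>0$ the system is mechanical, and this simultaneously settles the Moreover clause.

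The core of the argument is (i)$\Rightarrow$(iii). Suppose $L$ is a nondegenerate Lagrangian on $\mathcal{X}=T[\mathbb{R}]_T$ whose Euler--Lagrange equation~\eqref{eq:EL} reproduces~\eqref{eq:sys}, and lift it to a Lagrangian $\tilde L$ on $T\mathbb{R}$ that is $T$-periodic in $s$. Writing $F:=\tilde\Psi_1+\tilde\Psi_2\dot s^2$ and expanding~\eqref{eq:EL}, the multiplier $g:=\partial^2\tilde L/\partial\dot s^2>0$ must satisfy the Helmholtz transport equation for a single second-order ODE, namely $Xg+g\,\partial F/\partial\dot s=0$, where $X:=\dot s\,\partial_s+F\,\partial_{\dot s}$ is the reduced vector field. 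Because the mechanical multiplier $\tilde M$ of~\eqref{eq:M_V} is itself a positive solution of this equation, the ratio $\psi:=g/\tilde M$ is a first integral of $X$, i.e.\ $X\psi=0$. The key structural fact I would establish is that, since the lifted mechanical energy $E_0:=(1/2)\tilde M\dot s^2+\tilde V$ is a first integral that is proper in $\dot s$ and whose level sets organize the phase plane, $\psi$ is a function of $E_0$, say $\psi=\Phi(E_0)$ with $\Phi>0$ smooth, so that $g=\tilde M\,\Phi(E_0)$.

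From here the argument becomes a monodromy computation around the cylinder. Setting $\rho:=\exp\bigl(-2\int_0^T\tilde\Psi_2\,d\tau\bigr)$, one has $\tilde M(s+T)=\rho\,\tilde M(s)$, and demanding that $g$ descend to $\mathcal{X}$ (be $T$-periodic in $s$) forces, after the substitution $w=(1/2)\tilde M(s)\dot s^2\ge 0$, the functional relation $\Phi(\rho E+\beta)=\rho^{-1}\Phi(E)$ with $\beta:=\tilde V(s+T)-\rho\tilde V(s)$ constant in $s$. A smooth, strictly positive $\Phi$ can satisfy this only when $\rho=1$: otherwise $\Phi$ would have to blow up like $(E-E^\ast)^{-1}$ at the fixed point $E^\ast=\beta/(1-\rho)$, which is incompatible with smoothness and positivity on the range of $E_0$. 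Thus $\int_0^T\tilde\Psi_2\,d\tau=0$ and $\tilde M$ is $T$-periodic. With $\rho=1$ fixed, I reconstruct $\tilde L=\Lambda+a(s)\dot s+b(s)$ with $\Lambda_{\dot s\dot s}=g$ and $\Lambda(s,0)=\Lambda_{\dot s}(s,0)=0$ (the term $a(s)\dot s$ being pure gauge); matching~\eqref{eq:EL} to~\eqref{eq:sys} pins down $b'(s)=g(s,0)\tilde\Psi_1(s)=-\Phi(\tilde V(s))\,\tilde V'(s)$, and $T$-periodicity of $b$ gives $\int_0^T b'\,ds=-\int_{\tilde V(0)}^{\tilde V(T)}\Phi(u)\,du=0$. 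Since $\Phi>0$ this forces $\tilde V(T)=\tilde V(0)$, i.e.\ $\int_0^T\tilde\Psi_1\tilde M\,d\tau=0$, so $\tilde V$ is $T$-periodic as well; together these two conditions are exactly (iii).

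I expect the main obstacle to be the global characterization of the first integrals of $X$, that is, proving that the multiplier factor $\psi=g/\tilde M$ is genuinely a function of the mechanical energy $E_0$ rather than merely locally constant on level curves. This step requires controlling the phase portrait of~\eqref{eq:sys}---connectedness of the level sets of $E_0$, behaviour near equilibria, and growth for large $|\dot s|$, as well as identifying the range of $E_0$ precisely in the case $\rho\neq 1$. It is also the point where the smoothness and strict positivity of $g$ are indispensable, since they are exactly what rule out the singular solutions of the functional equation and thereby force the trivial monodromy $\rho=1$; the remaining periodicity of the potential then follows from the comparatively routine reconstruction of $\tilde L$ from $g$.
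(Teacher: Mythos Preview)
Your cycle of implications and the lifting to $T\mathbb{R}$ are the same scaffolding the paper uses, and your identification of $g:=\partial^2\tilde L/\partial\dot s^2$ as $\tilde M$ times a first integral is correct (in the paper's notation, $\Phi=F'$ where $F$ is defined below). However, the monodromy step you rely on to force $\rho=1$ has a genuine gap.

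The functional equation $\Phi(\rho E+\beta)=\rho^{-1}\Phi(E)$ does \emph{not} by itself preclude $\rho\neq 1$. Its fixed point $E^\ast=\beta/(1-\rho)$ need not lie in the interior of the range of $E_0$; when $\rho\neq 1$, Lemma~\ref{lem:MVprop} shows that $\tilde V(x)-E^\ast=\rho^{n}\bigl(\tilde V(x_0)-E^\ast\bigr)$ for $x=x_0+nT$, so it can happen that $\tilde V>E^\ast$ on all of $\mathbb{R}$ with $\inf\tilde V=E^\ast$ unattained, and then the range of $E_0$ is exactly $(E^\ast,\infty)$. On that half-line, $\Phi(E)=C/(E-E^\ast)$ is smooth and strictly positive and satisfies the functional equation, and the resulting $g=\tilde M\,\Phi(E_0)$ is smooth, positive, and $T$-periodic in $s$. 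A concrete instance is $\tilde\Psi_1\equiv -k<0$, $\tilde\Psi_2\equiv -c<0$: here $\rho=e^{2cT}>1$, yet $g=2cC/(c\dot s^2+k)$ is $s$-independent. So periodicity of the multiplier $g$ alone cannot force $\rho=1$; what actually fails in this example is the reconstruction of a $T$-periodic $\tilde L$, because $b'(s)=g(s,0)\tilde\Psi_1(s)\equiv -2cC$ is a nonzero constant. In your outline the reconstruction step is invoked only \emph{after} you have concluded $\rho=1$, so the argument as written does not close.

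The paper avoids this detour entirely by working one antiderivative up. Instead of the multiplier $g$, it uses the energy $\tilde E:=\dot x\,\partial\tilde L/\partial\dot x-\tilde L$, which is a $T$-periodic first integral. One then has $\tilde E=F(\tilde E_0)$ with $\partial^2\tilde L/\partial\dot x^2=\tilde M(x)\,F'(\tilde E_0)$; nondegeneracy gives $F'>0$, hence $F$ is strictly increasing. Periodicity of $\tilde E$ together with injectivity of $F$ immediately yields $\tilde E_0(x+T,\dot x)=\tilde E_0(x,\dot x)$ for all $(x,\dot x)$, and since $\dot x$ is arbitrary this forces $\tilde M$ and $\tilde V$ to be $T$-periodic simultaneously. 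No monodromy on $\Phi$, no separate reconstruction for $\tilde V$, and no case analysis on the location of $E^\ast$ are needed. The obstacle you flagged---that every smooth first integral is globally a function of $\tilde E_0$---is shared by both approaches; once it is granted, the energy route is both shorter and gap-free.
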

\begin{remark}
	The sufficiency part of the theorem was proved
	in~\cite{Maggiore-2012,Dame-2012a}, but we present it in
	Section~\ref{subsec:virMass} for completeness.
\end{remark}

\begin{theorem}[{\bf Solution to ILP - Part 3}]\label{thm:ILPsolution:part3}
	If $\Theta = [\mathbb{R}]_T$, then the following statements about
	system~\eqref{eq:sys} with state space $\mathcal{X} = T \Theta$ are equivalent:
	\begin{enumerate}[(i)]
		
		\item System~\eqref{eq:sys} is SEL.
		
		\item The function $\tilde M$ is $T$-periodic, while $\tilde V$ is not
		$T$-periodic.
	\end{enumerate}
	
	Moreover, if~\eqref{eq:sys} is SEL, then the Lagrangian function $L :
	T[\mathbb{R}]_T \to \mathbb{R}$ is the unique smooth function such that
	$L(\pi(x),\dot x) = \tilde L(x,\dot x)$ for all $(x,\dot x) \in \mathbb{R}
	\times \mathbb{R}$, where
	\begin{equation}\label{eq:Lagrangian:SEL}
	\begin{aligned}
	&\tilde L(x,\dot x) = - \sin(2 \pi f_0 \tilde E_0(x,\dot x)) +
	\sqrt{2f_0\tilde{M}(x)} \, \pi\dot{x} \, \times \\
	&\Bigg[\cos(2\pi
	f_0\tilde{V}(x))\, \mathbf{C}\Big(\sqrt{2f_0\tilde{M}(x)}\,\dot{x}\Big)-\sin(2\pi
	f_0\tilde{V}(x))\, \mathbf{S}\Big(\sqrt{2f_0\tilde{M}(x)}\, \dot{x}\Big)\Bigg],
	\end{aligned}
	\end{equation}
	where $f_0 = 1 / \tilde V(T)$, $\tilde E_0(x,\dot x) = (1/2) \tilde
	M(x) \dot x^2 + \tilde V(x)$, and $\mathbf{C}(\cdot)$, $\mathbf{S}(\cdot)$ are the
	Fresnel cosine and sine integrals, defined as $\mathbf{C}(x) = \int_0^x
	\cos(\pi t^2/2) dt$, $\mathbf{S}(x) = \int_0^x \sin(\pi t^2/2) dt$.
\end{theorem}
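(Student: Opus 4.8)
The plan is to lift everything to the universal cover $\mathbb{R}\times\mathbb{R}$, where Theorem~\ref{thm:ILPsolution:part1} already gives the mechanical Lagrangian $L_0=\tfrac{1}{2}\tilde M(x)\dot x^2-\tilde V(x)$ with conserved energy $\tilde E_0=\tfrac{1}{2}\tilde M\dot x^2+\tilde V$, and then decide which Lagrangians descend to the cylinder $[\mathbb{R}]_T\times\mathbb{R}$. First I would record the transformation laws under the deck map $x\mapsto x+T$. Since $\tilde\Psi_1,\tilde\Psi_2$ are $T$-periodic, $\tilde M(x+T)=\tilde M(x)\exp(-2\int_0^T\tilde\Psi_2)$, so $\tilde M$ is periodic iff $\int_0^T\tilde\Psi_2=0$; and when $\tilde M$ is periodic, $\tilde\Psi_1\tilde M$ is periodic, whence $\tilde V(x+T)=\tilde V(x)+\tilde V(T)$ and therefore $\tilde E_0(x+T,\dot x)=\tilde E_0(x,\dot x)+\tilde V(T)$. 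Thus in regime (ii) the energy drifts by the nonzero constant $\tilde V(T)$, which is exactly what makes $f_0=1/\tilde V(T)$ well defined.

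Next I would characterize admissible Lagrangians through their multiplier. Any Lagrangian $L$ for~\eqref{eq:sys} has $g:=\partial^2 L/\partial\dot s^2$ equal to a Jacobi last multiplier of the vector field, i.e. $\dot s\,g_s+(\Psi_1+\Psi_2\dot s^2)g_{\dot s}+2\Psi_2\dot s\,g=0$. Solving this linear first-order PDE by characteristics, whose first integrals are $\tilde E_0$ and $g/\tilde M$, yields the general smooth solution $g=\tilde M(s)\,\phi(\tilde E_0)$ for an arbitrary smooth $\phi$. If $L$ descends to the cylinder then its lift $g$ is $T$-periodic in $x$; evaluating at $x=0$, where $\tilde M(0)=1$ and $\tilde V(0)=0$, and setting $\lambda=\tilde M(T)$, $\beta=\tilde V(T)$, $r=\tfrac{1}{2}\dot x^2$, periodicity forces the functional equation $\lambda\,\phi(\lambda r+\beta)=\phi(r)$.

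This functional equation drives the equivalence (i)$\Leftrightarrow$(ii). If $\tilde M$ is \emph{not} periodic then $\lambda\neq1$, and iterating the affine map $r\mapsto\lambda r+\beta$ (a contraction when $\lambda<1$) or its inverse (a contraction when $\lambda>1$) against the factors $\lambda^{\pm n}$ forces $\phi\equiv0$, since one side tends to a finite limit while the other blows up unless $\phi$ vanishes. Hence no Lagrangian with $\partial^2 L/\partial\dot s^2\not\equiv0$ descends, and the system is neither EL nor SEL. Consequently, if the system is SEL then $\tilde M$ must be periodic; and if in addition $\tilde V$ were periodic, Theorem~\ref{thm:ILPsolution:part2} would make the system mechanical, yielding a nondegenerate descending Lagrangian and contradicting the definition of SEL. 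This proves (i)$\Rightarrow$(ii).

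For (ii)$\Rightarrow$(i) I would exhibit the explicit Lagrangian. With $\tilde M$ periodic and $\tilde V$ aperiodic, $g=\tilde M\,\phi(\tilde E_0)$ descends iff $\phi$ is $(1/f_0)$-periodic; I take $\phi(E)=2\pi f_0\cos(2\pi f_0 E)$ and integrate $g$ twice in $\dot s$. The substitution $w=\sqrt{2f_0\tilde M(s)}\,\dot s$ converts $2\pi f_0\tilde E_0$ into $\tfrac{\pi}{2}w^2+2\pi f_0\tilde V$, so the angle-addition formula splits the cosine into pieces integrating to the Fresnel functions $\mathbf{C},\mathbf{S}$, and this is precisely how~\eqref{eq:Lagrangian:SEL} arises. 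I would then verify directly that $\tilde L$ in~\eqref{eq:Lagrangian:SEL} satisfies $\partial^2\tilde L/\partial\dot s^2=2\pi f_0\tilde M\cos(2\pi f_0\tilde E_0)$ and that its Euler-Lagrange expression equals $\alpha(s,\dot s)\,[\ddot s-\Psi_1-\Psi_2\dot s^2]$ with $\alpha=\partial^2\tilde L/\partial\dot s^2$, and separately check that $\tilde L$ is $T$-periodic in $x$ and hence descends. Since $\alpha$ has zeros the Lagrangian is degenerate, and since by Theorem~\ref{thm:ILPsolution:part2} no nondegenerate Lagrangian can descend while $\tilde V$ is aperiodic, every admissible Lagrangian is degenerate; thus the system is SEL. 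The genuinely delicate part is this last verification: showing that~\eqref{eq:Lagrangian:SEL} is globally $T$-periodic and reproduces~\eqref{eq:sys}, because it mixes the aperiodic $\tilde V$ with the periodic $\tilde M$ inside trigonometric and Fresnel terms. Here the choice $f_0=1/\tilde V(T)$ pays off: under $x\mapsto x+T$ one has $2\pi f_0\tilde V\mapsto 2\pi f_0\tilde V+2\pi$ and $2\pi f_0\tilde E_0\mapsto 2\pi f_0\tilde E_0+2\pi$, while $\tilde M$ and $\sqrt{2f_0\tilde M}\,\dot x$ are already periodic, so every factor in~\eqref{eq:Lagrangian:SEL} returns to itself; the functional-equation step of the converse is short once set up.
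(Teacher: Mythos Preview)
Your proof is correct and structurally parallel to the paper's, with one technical variation worth noting. The paper (Propositions~\ref{prop:EL} and~\ref{prop:SEL}) encodes the periodicity constraint through the energy $\tilde E=\dot x\,\partial_{\dot x}\tilde L-\tilde L$, solves the transport PDE~\eqref{eq:PDE} to obtain $\tilde E=F(\tilde E_0)$, and then iterates the deck translation via Lemma~\ref{lem:MVprop} to force $F$ constant on $\mathrm{Im}(\tilde E_0)$ when $\tilde M(T)\neq1$. You instead work with the Hessian $g=\partial^2\tilde L/\partial\dot x^2$ as a Jacobi last multiplier, solve its PDE to get $g=\tilde M\,\phi(\tilde E_0)$, and iterate the resulting affine functional equation $\lambda\phi(\lambda r+\beta)=\phi(r)$ on $\phi$. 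Since $\partial_{\dot x}\tilde E=\dot x\,g$, your $\phi$ is precisely the paper's $F'$, and the two iterations are the same computation read through a derivative. The multiplier route is slightly more direct in that it isolates exactly the object whose nonvanishing distinguishes SEL from the trivial case, while the energy route connects a bit more transparently to the construction of~\eqref{eq:Lagrangian:SEL} as a function of $\tilde E_0$. Your derivation of the Fresnel Lagrangian from the periodic choice $\phi(E)=2\pi f_0\cos(2\pi f_0 E)$, the periodicity check via $2\pi f_0\tilde V\mapsto 2\pi f_0\tilde V+2\pi$, and the appeal to Theorem~\ref{thm:ILPsolution:part2} to exclude any nondegenerate descending Lagrangian all match the paper's argument in Proposition~\ref{prop:SEL}.
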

\begin{remark}
	The periodicity conditions in Theorems~\ref{thm:ILPsolution:part2}
	and~\ref{thm:ILPsolution:part3} are coordinate invariant. In
	Proposition~\ref{prop:coord_transformations} we show that they are
	invariant under vector bundle isomorphisms $T[\mathbb{R}]_{T_1} \to T[\mathbb{R}]_{T_2}$,
	$(s,\dot s) \mapsto (\varphi(s),\varphi'(s)\dot s)$, where
	$T_1,T_2>0$.
\end{remark}
\begin{remark}\label{rem:exceptionality}
	Theorems~\ref{thm:ILPsolution:part2} and~\ref{thm:ILPsolution:part3}
	show that, when $\Theta = [\mathbb{R}]_T$ (which, in the setup presented in
	Section~\ref{sec:Prelim}, corresponds to the situation when the VHC
	$h(q)=0$ is a Jordan curve) the property of~\eqref{eq:sys} being
	either EL or SEL is {\em exceptional,} in that it is not satisfied by
	a generic system of the form~\eqref{eq:Lagrangian:SEL} with state
	space $T\Theta$. Indeed, in order for~\eqref{eq:sys} to be EL or SEL
	it is required at a minimum that $\tilde M(x)$ be $T$-periodic, which
	corresponds to requiring that the $T$-periodic function $\tilde
	\Psi_2:\mathbb{R} \to \mathbb{R}$ has zero average. In other words, the set
	$\{\tilde{\Psi}_2:\mathbb{R}\to\mathbb{R}| \int_0^T\tilde{\Psi}_2(\tau)d\tau=0\}$
	has measure zero in the set of all smooth $T$-periodic and real-valued
	functions defined on the real line.
\end{remark}
\begin{remark}\label{rem:lit}
	Having presented the main results of this paper, we now return to the
	literature on the IPLM and place the theorems above in this
	context. First off, it is a matter of straightforward computation to
	check that the reduced dynamics~\eqref{eq:sys} always satisfy the
	Helmholtz conditions and, as such, system~\eqref{eq:sys} is
	automatically guaranteed to be {\em locally} Lagrangian. This fact is
	known since the work of Darboux~\cite{Darboux-1894}. For the existence
	of {\em global} Lagrangian structures, Theorem 5.8
	in~\cite{Takens-1979} indicates that when the state space
	of~\eqref{eq:sys} is $\mathbb{S}^1 \times \mathbb{R}$, from the existence of a local
	Lagrangian structure one cannot deduce the existence of a global such
	structure. As a matter of fact, Theorems~\ref{thm:ILPsolution:part2}
	and~\ref{thm:ILPsolution:part3} show that a global Lagrangian
	structure generally does not exist. The work of Anderson and
	Duchamp~\cite[Theorem~4.2]{Anderson-1980} provides necessary and
	sufficient conditions under which a locally variational source form
	(in our context, the reduced dynamics~\eqref{eq:sys}) is globally
	variational (in our context, globally Lagrangian). The conditions are
	in terms of the vanishing of a cohomology class which is guaranteed to
	exist but for which there is no systematic construction method. The
	criterion in~\cite{Anderson-1980} is therefore indirect.  It might be
	possible to use the methodology of \cite{Anderson-1980} to obtain a
	different proof of some of the results presented above, the
	application of Theorem~4.2 in \cite{Anderson-1980} to the context of
	this paper is far from trivial, and it is unclear whether that
	formalism allows one to distinguish between the existence of EL and
	SEL structures.  In this sense, to the best of our knowledge the
	results stated above are not contained in existing literature. Owing
	to the very specific form of the differential equation we investigate,
	we take a direct route to solving the inverse Lagrangian problem for
	the reduced dynamics arising from a VHC. The results stated above
	present necessary and sufficient conditions which are explicit and
	checkable.
\end{remark}

In the next two sections we prove Theorems~\ref{thm:ILPsolution:part2}
and~\ref{thm:ILPsolution:part3} assuming that $\Theta = [\mathbb{R}]_T$.  We
now provide an outline of the arguments that follow.

{\sc Outline of proofs of Theorems~\ref{thm:ILPsolution:part2}
	and~\ref{thm:ILPsolution:part3}.}

\begin{enumerate}[Step 1] \itemsep1pt \parskip0pt \parsep0pt
	
	\item In Section~\ref{sec:lift}, we define a lifted system, $\ddot x =
	\tilde \Psi_1(x) + \tilde \Psi_2(x) \dot x^2$, with state space
	$\mathbb{R}^2$. In Lemma~\ref{thm:lem1}, we show that trajectories of the
	lifted system are related to trajectories of system~\eqref{eq:sys}
	through the map $d \pi$, where $\pi(x) = [x]_T$.
	
	\item In Lemma~\ref{thm:lem2}, we show that solutions of the
	Euler-Lagrange equation~\eqref{eq:EL} are related through the map $d
	\pi$ to solutions of the Euler-Lagrange equation with Lagrangian
	$\tilde L = L \circ d \pi$.
	
	\item
	Leveraging Lemmas~\ref{thm:lem1} and~\ref{thm:lem2}, in
	Proposition~\ref{prop:equivR_RmodT} we show that~\eqref{eq:sys} is
	EL or SEL if and only if the lifted system is EL or SEL with
	a Lagrangian $\tilde L(x,\dot x)$ which is $T$-periodic with respect to $x$.
	
	\item In Section~\ref{subsec:virMass}, we find necessary and
	sufficient conditions for the existence of a Lagrangian $\tilde L$
	for the lifted system which enjoys the periodicity property of
	Proposition~\ref{prop:equivR_RmodT}. In Proposition~\ref{prop:EL} we
	show that in order for a function $\tilde L(x,\dot x)$ which is
	nondegenerate and $T$-periodic with respect to $x$ to be a
	Lagrangian for the lifted system, it is necessary and sufficient
	that $\tilde M$ and $\tilde V$ in~\eqref{eq:M_V} are
	$T$-periodic. This result proves
	Theorem~\ref{thm:ILPsolution:part2}.
	
	\item In Lemma~\ref{lem:MVprop}, we find expressions for $\tilde
	M(x + n T)$, $\tilde V(x + n T)$, $n \in \mathbb{Z}$.
	
	\item Using Lemma~\ref{lem:MVprop}, in Proposition~\ref{prop:SEL}, we
	prove that the lifted system is SEL with a Lagrangian $\tilde
	L(x,\dot x)$ which is $T$-periodic with respect to $x$ if and only
	if $\tilde M$ in~\eqref{eq:M_V} is $T$-periodic, while $\tilde V$
	isn't. In light of Proposition~\ref{prop:equivR_RmodT}, this proves
	Theorem~\ref{thm:ILPsolution:part3}.
\end{enumerate}

\section{Lift of ILP to $\mathbf{\mathbb{R}^2}$}
\label{sec:lift}

Let $\pi: \mathbb{R} \to [\mathbb{R}]_T$ be defined as $\pi(x) =[x]_T$, and let
$\bar{\pi}: T \mathbb{R} \to T [\mathbb{R}]_T$ denote the global differential of $\pi$,
$\bar{\pi}:=d \pi$, so that $\bar{\pi}(x,\dot x) = ([x]_T,d \pi_x \dot x) =
([x]_T,\dot x)$. Given two functions $f:[\mathbb{R}]_T \to \mathbb{R}$ and $F : T
[\mathbb{R}]_T \to \mathbb{R}$, we define their {\bf lifts} to be functions $\tilde
f := f \circ \pi : \mathbb{R} \to \mathbb{R}$, and $\tilde F :=F \circ \bar{\pi} : T \mathbb{R}
\to \mathbb{R}$, as in the following commutative diagrams:
%
%
%
\begin{center}
	\begin{tikzpicture}[node distance=1.5cm, auto]
	\node (Mtilde) {$\mathbb{R}$};
	\node (M) [node distance=3cm,right of=Mtilde] {$[\mathbb{R}]_T$};
	\node (R) [below of=Mtilde] {$\mathbb{R}$};
	\draw[->] (Mtilde) to node {$\pi$} (M);
	\draw[->,dashed] (Mtilde) to node [swap] {$\tilde f$} (R);
	\draw[->] (M) to node {$f$} (R);
	\end{tikzpicture}
	\begin{tikzpicture}[node distance=1.5cm, auto]
	\node (Xtilde) {$T\mathbb{R}$};
	\node (X) [node distance=3cm,right of=Xtilde] {$T [\mathbb{R}]_T$};
	\node (R) [below of=Xtilde] {$\mathbb{R}$};
	\draw[->] (Xtilde) to node {$\bar{\pi}:=d \pi$} (X);
	\draw[->,dashed] (Xtilde) to node [swap] {$\tilde F$} (R);
	\draw[->] (X) to node {$F$} (R);
	\end{tikzpicture}
\end{center} 
\noindent If $\tilde L : T \mathbb{R} \to \mathbb{R}$ is a smooth function, its
associated Euler-Lagrange equation is
\begin{equation}\label{eq:EL:lifted}
\frac{d}{dt} \frac{\partial \tilde L}{\partial \dot x} -
\frac{\partial \tilde L}{\partial x}=0.
\end{equation}
Finally, we define the lift of system~\eqref{eq:sys} as
\begin{equation}\label{eq:sys:lift}
\ddot x = \tilde \Psi_1(x) + \tilde \Psi_2(x) \dot x^2,
\end{equation}
where $\tilde \Psi_1$ and $\tilde \Psi_2$ are the lifts of $\Psi_1$
and $\Psi_2$.  The state space of the above
differential equation is $\tilde{\mathcal{X}} = T\mathbb{R}$.  We will apply
to system~\eqref{eq:sys:lift} the terminology of
Definition~\ref{def:EL}, whereby $L$ will be replaced by $\tilde L$.
\begin{lemma}
	\label{thm:lem1}
	The vector field of equation~\eqref{eq:sys} is $\bar{\pi}$-related to the
	vector field of~\eqref{eq:sys:lift}. Therefore, pair $(s(t),\dot
	s(t))$ is a solution of~\eqref{eq:sys} if and only if there exists a
	solution $( x(t),\dot x(t))$ of~\eqref{eq:sys:lift} such that
	$(s(t),\dot s(t)) = \bar{\pi}(x(t),\dot x(t))$.
\end{lemma}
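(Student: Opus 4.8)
The plan is to rewrite each second-order equation as a first-order vector field, verify the $\bar{\pi}$-relatedness by a one-line coordinate computation, and then deduce the solution correspondence from the standard behaviour of integral curves under related maps, taking care of the maximal intervals via the covering structure of $\pi$. First I would introduce the velocity coordinate $v=\dot s$ on $T[\mathbb{R}]_T$ and $w=\dot x$ on $T\mathbb{R}$, so that \eqref{eq:sys} and \eqref{eq:sys:lift} become the first-order vector fields
\[
X(s,v) = \bigl(v,\ \Psi_1(s) + \Psi_2(s)\, v^2\bigr), \qquad
\tilde X(x,w) = \bigl(w,\ \tilde\Psi_1(x) + \tilde\Psi_2(x)\, w^2\bigr),
\]
on $T[\mathbb{R}]_T$ and $T\mathbb{R}$ respectively. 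The assertion to be proved is that $X$ is $\bar{\pi}$-related to $\tilde X$, i.e., $d\bar{\pi}_{(x,w)}\,\tilde X(x,w) = X(\bar{\pi}(x,w))$ for every $(x,w)\in T\mathbb{R}$.

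Next I would verify this identity directly. Since $\pi(x)=[x]_T$ is a covering map, in the standard chart it is locally the identity, so $d\pi_x=\mathrm{id}$; this is exactly what the definition $\bar{\pi}(x,\dot x)=([x]_T,\dot x)$ already records. Consequently, in the natural coordinates the differential $d\bar{\pi}_{(x,w)}$ is the identity matrix, so applying it to $\tilde X(x,w)$ returns $\bigl(w,\ \tilde\Psi_1(x)+\tilde\Psi_2(x)\,w^2\bigr)$, viewed as a tangent vector at $([x]_T,w)$. On the other hand, $X(\bar{\pi}(x,w)) = \bigl(w,\ \Psi_1([x]_T)+\Psi_2([x]_T)\,w^2\bigr)$. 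These two expressions coincide because, by the very definition of the lifts, $\tilde\Psi_i(x)=\Psi_i([x]_T)$ for $i=1,2$. This establishes the $\bar{\pi}$-relatedness.

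For the solution correspondence, the forward implication is immediate from the general fact that a map intertwining two related vector fields carries integral curves to integral curves: if $(x(t),\dot x(t))$ solves \eqref{eq:sys:lift}, then $\tfrac{d}{dt}\,\bar{\pi}(x(t),\dot x(t)) = d\bar{\pi}\,\tilde X = X\circ\bar{\pi}$ evaluated along the curve, so $\bar{\pi}(x(t),\dot x(t))$ solves \eqref{eq:sys}. For the converse, given a solution $(s(t),\dot s(t))$ of \eqref{eq:sys} with initial value $(s_0,v_0)$, I would choose $x_0\in\pi^{-1}(s_0)$, set $w_0=v_0$, and let $(x(t),\dot x(t))$ be the solution of \eqref{eq:sys:lift} through $(x_0,w_0)$. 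By the forward implication, $\bar{\pi}(x(t),\dot x(t))$ solves \eqref{eq:sys} with the same initial condition $(s_0,v_0)$, so by uniqueness of solutions it agrees with $(s(t),\dot s(t))$ wherever both are defined.

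The step requiring the most care is matching the maximal intervals of definition in the converse, since a priori the lifted solution could be defined on a strictly smaller interval than $(s(t),\dot s(t))$. I would settle this using the fact that $\bar{\pi}=\pi\times\mathrm{id}$ is itself a covering map of $T\mathbb{R}$ onto the cylinder $T[\mathbb{R}]_T$: the continuous solution curve $t\mapsto(s(t),\dot s(t))$ admits a unique lift through $\bar{\pi}$ defined on the \emph{entire} interval on which $(s(t),\dot s(t))$ exists, and starting at $(x_0,w_0)$. Because $\bar{\pi}$ is a local diffeomorphism and $X$ is $\bar{\pi}$-related to $\tilde X$, this lift is necessarily an integral curve of $\tilde X$, hence a solution of \eqref{eq:sys:lift} on the full interval. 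This upgrades the local agreement to an identity on the whole maximal interval and completes the ``if and only if.''
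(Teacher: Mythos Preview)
Your proof is correct and follows essentially the same approach as the paper: write out the two vector fields in coordinates, observe that $d\bar\pi$ acts as the identity, and verify $\bar\pi$-relatedness directly using $\tilde\Psi_i=\Psi_i\circ\pi$. For the solution correspondence the paper simply invokes~\cite[Proposition~9.6]{Lee13} (surjectivity of $\bar\pi$ plus $\bar\pi$-relatedness), whereas you unpack this by hand and additionally use the covering structure of $\bar\pi$ to match maximal intervals; this makes your argument more self-contained but is not a different strategy.
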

\begin{proof}
	The vector fields of system~\eqref{eq:sys} and
	system~\eqref{eq:sys:lift} are given by
	\[
	\begin{aligned}
	& F: \mathcal{X} \rightarrow T\mathcal{X}, \quad (s,\dot{s})\mapsto \dot{s}
	\frac{\partial}{\partial s} + \left( \Psi_1(s)+\Psi_2(s)\dot{s}^2
	\right) \frac{\partial}{\partial \dot s} \\
	& \tilde F: \tilde{\mathcal{X}} \rightarrow T\tilde{\mathcal{X}}, \quad (x,\dot{x})\mapsto
	\dot{x} \frac{\partial}{\partial x} + \left( \tilde \Psi_1(x)+\tilde
	\Psi_2(x)\dot{x}^2 \right) \frac{\partial}{\partial \dot x}.
	\end{aligned}
	\]
	Recall that $\pi(x) = [x]_T$, and $\bar{\pi}(x,\dot x) =
	([x]_T,d\pi_{x}\dot x) =([x]_T,\dot x)$. For all $(x,\dot x) \in
	\tilde{\mathcal{X}}$, the differential $d\bar{\pi}_{(x,\dot x)}: T_{(x,\dot x)}
	\tilde{\mathcal{X}} \to T_{\bar{\pi}(x,\dot x)} \mathcal{X}$ is the identity map
	\[
	d \bar{\pi}_{(x,\dot x)} \left(v_1 \frac{ \partial} {\partial x} + v_2
	\frac{\partial} { \partial \dot x}\right) = v_1 \frac{\partial}
	{\partial s} + v_2 \frac{\partial} {\partial \dot s}.
	\]
	We thus have
	\[
	\begin{aligned}
	d\bar{\pi}_{(x,\dot{x})}\tilde F(x,\dot{x}) &=\dot{x}
	\frac{\partial}{\partial s} + \left(
	\tilde{\Psi}_1(x)+\tilde{\Psi}_2(x)\dot{x}^2\right) \frac{\partial} {
		\partial \dot s}\\
	&= \left( \dot s \frac{\partial}{\partial s} + \left(
	\Psi_1(s)+\Psi_2(s)\dot s^2\right) \frac{\partial}{\partial \dot
		s}\right)\Bigg|_{(s,\dot s)= \bar{\pi}(x,\dot x)} \\
	&=F \circ \bar{\pi}(x, \dot x),
	\end{aligned}
	\]
	proving that $F$ and $\tilde F$ are $\bar{\pi}$-related.
	Since $\bar{\pi}$ is surjective, by~\cite[Proposition~9.6]{Lee13}, a
	pair $(s(t),\dot s(t))$ is a solution of~\eqref{eq:sys} if and only if
	there exists a solution $( x(t),\dot x(t))$ of~\eqref{eq:sys:lift}
	such that $(s(t),\dot s(t)) = \bar{\pi}(x(t),\dot x(t))$.
	\qquad\end{proof}
\begin{lemma}
	\label{thm:lem2}
	Let $I \subset \mathbb{R}$ be an open interval, and $s:I \to [\mathbb{R}]_T$,
	$x:I \to \mathbb{R}$ be $C^1$ signals such that $(s(t),\dot s(t)) =
	\bar{\pi}(x(t),\dot x(t))$ for all $t \in I$. Then, the pair $(s(t),\dot
	s(t))$ satisfies the Euler-Lagrange equation~\eqref{eq:EL} with smooth
	Lagrangian $L: T [\mathbb{R}]_T \to \mathbb{R}$ if and only if the pair $(x(t),\dot
	x(t))$ satisfies the lifted Euler-Lagrange
	equation~\eqref{eq:EL:lifted} with smooth Lagrangian $\tilde L = L
	\circ \bar{\pi}$.
\end{lemma}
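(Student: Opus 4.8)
The plan is to exploit that $\pi:\mathbb{R}\to[\mathbb{R}]_T$ is the smooth covering map $\mathbb{R}\to\mathbb{R}/T\mathbb{Z}$, hence a local diffeomorphism, so that in suitably adapted local coordinates $\bar{\pi}$ acts as the identity and the two Euler--Lagrange operators become literally the same expression evaluated at $\bar{\pi}(x,\dot x)$. The equivalence then follows by a pointwise comparison along the related signals, with no analytic content beyond the chain rule.

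First I would fix an arbitrary $t_0\in I$ and choose an open interval $U\ni x(t_0)$ of length less than $T$, so that $\pi|_U:U\to V:=\pi(U)\subset[\mathbb{R}]_T$ is a diffeomorphism. Using $(\pi|_U)^{-1}$ as a chart on $[\mathbb{R}]_T$, the configuration coordinate $s$ on $V$ is identified with $x$ on $U$, and the induced fibre coordinate $\dot s$ on $TV$ with $\dot x$. In these canonical tangent-bundle coordinates $\bar{\pi}|_{TU}$ is precisely the identity map $(x,\dot x)\mapsto(s,\dot s)=(x,\dot x)$, exactly as already recorded in the proof of Lemma~\ref{thm:lem1}, where $d\bar{\pi}_{(x,\dot x)}$ was shown to be the identity. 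Differentiating $\tilde L=L\circ\bar{\pi}$ by the chain rule and using that the Jacobian of $\bar{\pi}$ in these coordinates is the identity (so the cross terms vanish), I obtain on $TU$
\[
\frac{\partial \tilde L}{\partial x}(x,\dot x)=\frac{\partial L}{\partial s}\bigl(\bar{\pi}(x,\dot x)\bigr),\qquad
\frac{\partial \tilde L}{\partial \dot x}(x,\dot x)=\frac{\partial L}{\partial \dot s}\bigl(\bar{\pi}(x,\dot x)\bigr).
\]

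Next I would evaluate along the related signals. Since $(s(t),\dot s(t))=\bar{\pi}(x(t),\dot x(t))$ for all $t$, substitution into the two identities above, followed by differentiating the $\dot x$-identity in $t$ and subtracting the $x$-identity, yields
\[
\frac{d}{dt}\frac{\partial \tilde L}{\partial \dot x}-\frac{\partial \tilde L}{\partial x}
=\left(\frac{d}{dt}\frac{\partial L}{\partial \dot s}-\frac{\partial L}{\partial s}\right)\Bigg|_{(s,\dot s)=\bar{\pi}(x(t),\dot x(t))}.
\]
Thus the lifted equation~\eqref{eq:EL:lifted} holds at $t$ if and only if~\eqref{eq:EL} holds at $t$. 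Because $t_0\in I$ was arbitrary and the adapted charts cover every point of the signals, this pointwise equivalence upgrades to equivalence on all of $I$, proving the lemma in both directions simultaneously.

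The only delicate point is bookkeeping rather than a genuine obstacle: one must check that $\partial L/\partial s$ and $\partial L/\partial \dot s$ are the well-defined coordinate derivatives of the smooth function $L$ on the manifold $T[\mathbb{R}]_T$, and that they transform without a Jacobian factor under $\bar{\pi}$. This is immediate once the local-diffeomorphism charts are installed, since $d\pi_x$ equals the identity in the chosen coordinate; the entire argument is local and requires only that $U$ have length below $T$ so that $\pi|_U$ is injective. I expect the main care to go into phrasing the chart construction cleanly so that the identity $\tilde L=L\circ\bar{\pi}$ and the derivative transfer are manifestly coordinate computations.
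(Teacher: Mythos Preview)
Your proposal is correct and follows essentially the same approach as the paper: both arguments rest on the observation that $d\bar{\pi}_{(x,\dot x)}$ is the identity, so the chain rule gives $\partial\tilde L/\partial x=(\partial L/\partial s)\circ\bar{\pi}$ and $\partial\tilde L/\partial\dot x=(\partial L/\partial\dot s)\circ\bar{\pi}$, from which the equivalence of the two Euler--Lagrange equations along the related signals is immediate. The only difference is cosmetic: the paper writes this directly as $d\tilde L_{(x,\dot x)}=dL_{\bar{\pi}(x,\dot x)}\circ d\bar{\pi}_{(x,\dot x)}=dL_{\bar{\pi}(x,\dot x)}$ (using the identity computed in the proof of Lemma~\ref{thm:lem1}), whereas you spell out the local chart $U\to V$ to make the identity nature of $\bar{\pi}$ explicit before applying the chain rule.
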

\begin{proof}
	We have
	\[
	d\tilde{L}_{(x(t),\dot{x}(t))}=d(L\circ\bar{\pi})_{(x(t),\dot{x}(t))}=dL_{\bar{\pi}(x(t),\dot{x}(t))}\circ
	d\bar{\pi}_{(x(t),\dot{x}(t))}=dL_{\bar{\pi}(x(t),\dot x(t))}.
	\]
	Using the fact that the partial derivatives of $\tilde L$ and $L$ are
	the components of $d\tilde L_{(x,\dot x)}$ and $d L_{(s,\dot s)}$,
	respectively, we have
	\[
	\frac{\partial\tilde{L}}{\partial x}(x(t),\dot{x}(t)) = \frac{\partial
		L}{\partial s}(\bar{\pi}(x(t),\dot{x}(t))), \quad
	\frac{\partial\tilde{L}}{\partial
		\dot{x}}(x(t),\dot{x}(t))=
	\frac{\partial L}{\partial \dot{s}}
	(\bar{\pi}(x(t),\dot{x}(t))),
	\]
	from which it follows that the Euler-Lagrange
	equation~\eqref{eq:EL:lifted} with Lagrangian $\tilde L = L \circ
	\bar{\pi}$ is satisfied along $(x(t),\dot{x}(t))$ if and only if the
	Euler-Lagrange equation~\eqref{eq:EL} with Lagrangian $L$ is
	satisfied along $(s(t),\dot{s}(t))=\bar{\pi}(x(t),\dot{x}(t))$.
	\qquad\end{proof}
%
%
%
%
%
%
\begin{proposition}
	\label{prop:equivR_RmodT}
	The following statements are equivalent
	\begin{enumerate}[(i)]
		
		\item System~\eqref{eq:sys} with state space $\mathcal{X} = T [\mathbb{R}]_T$ is EL
		(resp., SEL) with Lagrangian $L$.
		
		\item System~\eqref{eq:sys:lift} with state space $\tilde{\mathcal{X}} =T \mathbb{R}$
		is EL (resp., SEL) with Lagrangian $\tilde{L}=L\circ \bar{\pi}$.
		
	\end{enumerate}
\end{proposition}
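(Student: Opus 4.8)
The plan is to build a dictionary between Lagrangians $L$ on the cylinder $\mathcal{X} = T[\mathbb{R}]_T$ and their lifts $\tilde L = L \circ \bar{\pi}$ on the plane $\tilde{\mathcal{X}} = T\mathbb{R}$, and to verify that each ingredient of Definition~\ref{def:EL}---smoothness, (non)degeneracy, and the requirement that every solution satisfy the Euler--Lagrange equation---is preserved in both directions. Lemmas~\ref{thm:lem1} and~\ref{thm:lem2} already carry the analytic weight; the proposition is the bookkeeping that repackages them into a statement about EL and SEL structures.

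First I would record the pointwise relation between the velocity-second-derivatives. Since $\bar{\pi}(x,\dot x) = ([x]_T,\dot x)$ is the identity in the velocity slot, $\partial^2 \tilde L / \partial \dot x^2 (x,\dot x) = \partial^2 L / \partial \dot s^2([x]_T,\dot x)$. Because $\pi$ is surjective, this identity transfers the three relevant conditions verbatim: $\partial^2 L/\partial \dot s^2 > 0$ everywhere iff $\partial^2 \tilde L/\partial \dot x^2 > 0$ everywhere; $\partial^2 L/\partial \dot s^2$ has a zero iff $\partial^2 \tilde L/\partial \dot x^2$ has a zero; and $\partial^2 L/\partial \dot s^2 \equiv 0$ iff $\partial^2 \tilde L/\partial \dot x^2 \equiv 0$. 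For the reproduction property I would combine the two lemmas: by Lemma~\ref{thm:lem1} (via $\bar{\pi}$-relatedness) the map $\bar{\pi}$ carries solutions of~\eqref{eq:sys:lift} to solutions of~\eqref{eq:sys} and every solution of~\eqref{eq:sys} arises this way, while by Lemma~\ref{thm:lem2}, along any such matched pair the equation~\eqref{eq:EL} for $L$ holds iff the lifted equation~\eqref{eq:EL:lifted} for $\tilde L$ holds. Hence $L$ satisfies property~(ii) of Definition~\ref{def:EL}(a) for~\eqref{eq:sys} iff $\tilde L$ satisfies it for~\eqref{eq:sys:lift}.

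With this dictionary the EL case is immediate: ``\eqref{eq:sys} is EL with $L$'' means $L$ is smooth, nondegenerate, and reproducing, and each clause is equivalent to its counterpart for $\tilde L$, i.e.\ to ``\eqref{eq:sys:lift} is EL with $\tilde L$''; no quantification over competing Lagrangians enters. For SEL, the existence clause again transfers through the reproduction correspondence, while the singularity clause---every reproducing Lagrangian whose velocity-Hessian is not identically zero is degenerate---transfers because $\ell \mapsto \ell \circ \bar{\pi}$ is a bijection from the smooth Lagrangians on $\mathcal{X}$ onto the smooth Lagrangians on $\tilde{\mathcal{X}}$ that are $T$-periodic in $x$, and this bijection preserves both reproduction (by the lemmas) and degeneracy (by the Hessian identity above).

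The real obstacle is this singularity clause, and specifically the class of Lagrangians over which it is quantified. On the plane the lifted system is of the form~\eqref{eq:sys} with $\Theta = \mathbb{R}$, so Theorem~\ref{thm:ILPsolution:part1} endows it with the nondegenerate mechanical Lagrangian $(1/2)\tilde M(x)\dot x^2 - \tilde V(x)$; quantified over \emph{all} planar Lagrangians the lifted system would therefore always be EL and never SEL, so the SEL half of the equivalence would fail. The resolution---the crux of the argument---is that this mechanical Lagrangian is $T$-periodic only when both $\tilde M$ and $\tilde V$ are, and in the SEL regime $\tilde V$ is not periodic (cf.\ Theorem~\ref{thm:ILPsolution:part3}); hence it does not descend to the cylinder and must be excluded. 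I would therefore make explicit that the lifted EL/SEL notions in~(ii) are understood relative to $T$-periodic Lagrangians, equivalently to Lagrangians of the form $\ell \circ \bar{\pi}$, so that the bijection above matches the two singularity quantifiers and closes the equivalence.
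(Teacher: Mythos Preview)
Your proposal is correct and follows the same approach as the paper: transfer (non)degeneracy via the Hessian identity $\partial^2 \tilde L/\partial \dot x^2 = (\partial^2 L/\partial \dot s^2)\circ\bar\pi$, and transfer the reproduction property by combining Lemmas~\ref{thm:lem1} and~\ref{thm:lem2} with the surjectivity of $\bar\pi$. Your treatment of the SEL universal quantifier is in fact more careful than the paper's own proof (which handles the ``respectively, SEL'' case only by a parenthetical ``analogous''); your observation that statement~(ii) must be read with the SEL competition restricted to $T$-periodic Lagrangians---else Theorem~\ref{thm:ILPsolution:part1} would furnish a nondegenerate Lagrangian for the lifted system and the equivalence would fail---is exactly right, and is precisely how the paper tacitly uses the proposition downstream in Propositions~\ref{prop:EL} and~\ref{prop:SEL}.
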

\begin{proof}
	Let $\tilde L = L \circ \bar{\pi}$. Then, by the reasoning used in the
	proof of Lemma~\ref{thm:lem2}, it is easy to see that $(\partial^2
	\tilde L / \partial \dot x^2)(x,\dot x) = (\partial^2 L / \partial
	\dot s^2) (\bar{\pi}(x,\dot x))$.  Therefore, $L$ is nondegenerate
	(respectively, degenerate) if and only if $\tilde{L}$ is nondegenerate
	(respectively, degenerate). Now, suppose that system \eqref{eq:sys} is
	EL (respectively, SEL) with Lagrangian $L$. Consider an arbitrary
	solution of \eqref{eq:sys:lift}, namely, $(x(t),\dot{x}(t))$, where $x
	:I \to \mathbb{R}$ is $C^1$ and $I \subset \mathbb{R}$ is an open interval. By Lemma
	\ref{thm:lem1}, $(s(t),\dot s(t)):=\bar{\pi}(x(t),\dot{x}(t))$ is a
	solution of \eqref{eq:sys}, and thus satisfies the Euler-Lagrange
	equation \eqref{eq:EL}. By Lemma \ref{thm:lem2}, $(x(t),\dot{x}(t))$
	satisfies the Euler-Lagrange equation with Lagrangian
	$\tilde{L}=L\circ \bar{\pi}$. Since $( x(t), \dot x(t) )$ is an arbitrary
	solution of~\eqref{eq:sys:lift}, and since $\bar{\pi}: T \mathbb{R} \to T [\mathbb{R}]_T$
	is onto, system~\eqref{eq:sys:lift} is EL (respectively, SEL) with
	Lagrangian $\tilde{L}=L\circ \bar{\pi}$. The proof that
	if~\eqref{eq:sys:lift} is EL (respectively, SEL) with Lagrangian
	$\tilde{L}=L\circ \bar{\pi}$, then~\eqref{eq:sys} is EL (respectively, SEL)
	with Lagrangian $L$ is analogous. We consider an arbitrary solution
	$(s(t),\dot s(t))$ of~\eqref{eq:sys}, and we let $(x(t),\dot x(t))$ be
	a solution of~\eqref{eq:sys:lift} such that $(s(t),\dot s(t)) =
	\bar{\pi}(x(t),\dot x(t))$. Such a solution exists by Lemma~\ref{thm:lem1}
	and the fact that $\bar{\pi}$ is onto. Thus, $(x(t),\dot x(t))$ is a
	solution of the Euler-Lagrange equation~\eqref{eq:EL:lifted} with
	Lagrangian $\tilde L = L \circ \bar{\pi}$. By
	Lemma~\ref{thm:lem2}, $(s(t),\dot s(t))$ is a solution of the
	Euler-Lagrange equation~\eqref{eq:EL} with Lagrangian $L$. Since
	$(s(t),\dot s(t))$ is an arbitrary solution of~\eqref{eq:sys}, we
	conclude that~\eqref{eq:sys} is EL (respectively, SEL).
	\qquad\end{proof}
%
%
%
\section{Proofs of Main Results}
\label{subsec:virMass} 
By virtue of Proposition~\ref{prop:equivR_RmodT}, solving ILP and
finding a Lagrangian $L$ for system~\eqref{eq:sys} is equivalent to
solving ILP and finding a Lagrangian $\tilde{L}$ for the lifted
system~\eqref{eq:sys:lift} such that $\tilde L = L \circ \bar{\pi}$, for
some smooth $L: T [\mathbb{R}]_T \to \mathbb{R}$. Given a smooth function $\tilde L
: T\mathbb{R} \to \mathbb{R}$, there exists a smooth function $L : T [\mathbb{R}]_T \to
\mathbb{R}$ satisfying $\tilde L = L \circ \bar{\pi}$ if and only if $\tilde L$ is
$T$-periodic with respect to its first argument, i.e., $\tilde
L(x+T,\dot x) = \tilde L(x,\dot x)$ for all $(x,\dot x) \in T\mathbb{R}$. In
this section, we leverage this fact to prove
Theorems~\ref{thm:ILPsolution:part2} and~\ref{thm:ILPsolution:part3}.

\begin{proposition}
	\label{prop:EL}
	The lifted system~\eqref{eq:sys:lift} is EL with a smooth Lagrangian
	$\tilde L: T \mathbb{R} \to \mathbb{R}$ such that $\tilde L(x+T,\dot x) = \tilde
	L(x,\dot x)$ for all $(x,\dot{x})\in T\mathbb{R}$, if and only if the virtual
	mass $\tilde{M}$ and virtual potential $\tilde{V}$
	in~\eqref{eq:sys:lift} are $T$-periodic. If this is the case, then
	system~\eqref{eq:EL} is mechanical with Lagrangian $L = (1/2) M(s)
	\dot s^2 - V(s)$, where $M$ and $V$ are defined through $\tilde M = M
	\circ \pi$, $\tilde V = V \circ \pi$.
\end{proposition}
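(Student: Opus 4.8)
The plan is to work entirely with the lifted system~\eqref{eq:sys:lift} on $\mathbb{R}^2$ and to play off against each other two conserved quantities. Throughout I write $\tilde E_0(x,\dot x):=\tfrac12\tilde M(x)\dot x^2+\tilde V(x)$ and record the normalizations $\tilde M(0)=1$, $\tilde V(0)=0$, $\tilde M>0$, which are immediate from~\eqref{eq:M_V}. For the \emph{sufficiency} direction, suppose $\tilde M,\tilde V$ are $T$-periodic. By the computation underlying Theorem~\ref{thm:ILPsolution:part1}, the function $\tilde L_0(x,\dot x)=\tfrac12\tilde M(x)\dot x^2-\tilde V(x)$ is a Lagrangian for~\eqref{eq:sys:lift}, and it is nondegenerate since $\partial^2\tilde L_0/\partial\dot x^2=\tilde M>0$. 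Periodicity of $\tilde M,\tilde V$ makes $\tilde L_0$ $T$-periodic in $x$, so it descends to a smooth $L(s,\dot s)=\tfrac12 M(s)\dot s^2-V(s)$ on $T[\mathbb{R}]_T$ with $M\circ\pi=\tilde M$ and $V\circ\pi=\tilde V$; this $L$ is mechanical, which settles the ``moreover'' part and, via Proposition~\ref{prop:equivR_RmodT}, one implication.

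For \emph{necessity}, let $\tilde L$ be any nondegenerate Lagrangian for~\eqref{eq:sys:lift} that is $T$-periodic in $x$, and set $g:=\partial^2\tilde L/\partial\dot x^2>0$. The tool I would introduce is the Jacobi energy $\tilde E(x,\dot x):=\dot x\,\partial\tilde L/\partial\dot x-\tilde L$. Since every solution of~\eqref{eq:sys:lift} satisfies the lifted Euler-Lagrange equation~\eqref{eq:EL:lifted}, one gets $\tfrac{d}{dt}\tilde E=\dot x\,(\tfrac{d}{dt}\partial_{\dot x}\tilde L-\partial_x\tilde L)=0$, so $\tilde E$ is a first integral, and $T$-periodicity of $\tilde L$ makes $\tilde E$ $T$-periodic in $x$ as well. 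Moreover $\partial\tilde E/\partial\dot x=\dot x\,g$, so for each fixed $x$ the map $\dot x\mapsto\tilde E(x,\dot x)$ is strictly increasing on $[0,\infty)$; the same holds for $\tilde E_0$ because $\partial\tilde E_0/\partial\dot x=\tilde M\dot x$.

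The crux is to confront these two first integrals along a high-energy orbit and its $T$-translate. From~\eqref{eq:M_V}, using the $T$-periodicity of $\tilde\Psi_1$ and $\tilde\Psi_2$ (they are lifts of functions on $[\mathbb{R}]_T$), a direct computation gives $\tilde M(x+T)=\tilde M(T)\,\tilde M(x)$ and $\tilde V(x+T)=\tilde M(T)\,\tilde V(x)+\tilde V(T)$, hence $\tilde E_0(x+T,\dot x)=\tilde M(T)\,\tilde E_0(x,\dot x)+\tilde V(T)$. Fix any $e>\max_{x\in[0,T]}\tilde V(x)$. On $\{\dot x>0\}$ the level set $\{\tilde E_0=e\}$ is the graph $\dot x(x)=\sqrt{2(e-\tilde V(x))/\tilde M(x)}$, which stays in the open upper half-plane over all of $[0,T]$; since $\tilde E_0$ is conserved and the vector field is nonzero there, this connected graph is a single orbit joining $(0,\dot x_0)$ to $(T,\dot x_1)$, with $\dot x_0=\sqrt{2e}$ and $\dot x_1=\sqrt{2(e-\tilde V(T))/\tilde M(T)}$. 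Conservation of $\tilde E$ along the orbit gives $\tilde E(0,\dot x_0)=\tilde E(T,\dot x_1)$, and $T$-periodicity of $\tilde E$ rewrites the right side as $\tilde E(0,\dot x_1)$; strict monotonicity of $\tilde E(0,\cdot)$ then forces $\dot x_0=\dot x_1$, i.e. $(\tilde M(T)-1)\,e=-\tilde V(T)$. As this holds for every sufficiently large $e$, I conclude $\tilde M(T)=1$ and $\tilde V(T)=0$, which by the transformation rules is exactly the periodicity of $\tilde M$ and $\tilde V$.

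The main obstacle is the temptation to argue that the two first integrals are functionally dependent, $\tilde E=\Phi(\tilde E_0)$ with $\Phi$ strictly increasing, and to invoke injectivity of $\Phi$ on the relation $\Phi(\tilde M(T)\tilde E_0+\tilde V(T))=\Phi(\tilde E_0)$. This is morally the right idea, but it is genuinely delicate: when $\tilde V$ has several wells the level sets of $\tilde E_0$ are disconnected, so a single global $\Phi$ need not exist, and this is precisely the global-versus-local phenomenon flagged in Remark~\ref{rem:lit}. The device above—comparing one high-energy orbit with its $T$-translate and using monotonicity only on the single transversal $\{x=0\}$—circumvents this global issue, and getting that reduction right is the heart of the argument. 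The remaining items (the two transformation rules, conservation of $\tilde E$, and the existence of the high-energy rotation graph over the compact interval $[0,T]$, which is automatic since $\tilde M,\tilde V$ are bounded there) are routine verifications.
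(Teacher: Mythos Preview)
Your argument is correct and shares the paper's opening move: introducing the Jacobi energy $\tilde E=\dot x\,\partial_{\dot x}\tilde L-\tilde L$, observing it is a $T$-periodic first integral, and exploiting $\partial_{\dot x}\tilde E=\dot x\,\partial_{\dot x}^{2}\tilde L$. The paper then diverges from you at the key step: it invokes the method of characteristics to assert a global representation $\tilde E=F(\tilde E_0)$ for a single smooth $F$, uses nondegeneracy to get $F'>0$, and reads off $\tilde E_0(x+T,\dot x)=\tilde E_0(x,\dot x)$ from injectivity of $F$. You instead follow one high-energy orbit from $(0,\dot x_0)$ to $(T,\dot x_1)$, combine conservation of $\tilde E$ with its $T$-periodicity to obtain $\tilde E(0,\dot x_0)=\tilde E(0,\dot x_1)$, and use strict monotonicity of $\tilde E(0,\cdot)$ on $(0,\infty)$ to force $\dot x_0=\dot x_1$; letting $e$ vary yields $\tilde M(T)=1$, $\tilde V(T)=0$. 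Your critique of the global-$F$ route is well taken---level sets of $\tilde E_0$ can be disconnected at low energies, so a single $F$ need not exist on all of $\mathbb{R}^2$, a point the paper passes over---and your orbitwise reduction to a single transversal $\{x=0\}$ is an honest way around it. The paper's presentation is terser once one accepts the characteristics claim; yours is slightly longer but more self-contained and makes explicit that only a one-parameter family of rotation-type orbits is needed. Both routes ultimately encode the same mechanism: two first integrals plus monotonicity in $\dot x$ force $\tilde E_0$ to inherit the $T$-periodicity of $\tilde E$.
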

\begin{proof}
	$(\Leftarrow)$ If $\tilde M$, $\tilde V$ are $T$-periodic, then
	$\tilde L(x,\dot x) = (1/2) \tilde M(x) \dot x^2 - \tilde V(x)$ is
	$T$-periodic with respect to $x$, and
	\[
	\frac{d}{dt} \frac{\partial \tilde L}{\partial \dot x} -
	\frac{\partial \tilde L}{\partial x} = \tilde M(x)
	\big(\ddot{x}-\tilde{\Psi}_1(x)-\tilde{\Psi}_2(x)\dot{x}^2\big).
	\]
	Since $\tilde M>0$, the lifted system is mechanical with Lagrangian $\tilde L$.
	
	$(\Rightarrow)$ Assume that system \eqref{eq:sys:lift} is EL with
	smooth Lagrangian $\tilde L : T \mathbb{R} \to \mathbb{R}$ such that $\tilde
	L(x+T,\dot x) = \tilde L(x,\dot x)$ for all $(x,\dot x) \in T \mathbb{R}$.
	By definition of EL system, $\tilde L$ is nondegenerate, i.e.,
	$\partial^2 \tilde L / \partial \dot x^2 \neq 0$.  Define a smooth
	function $\tilde E : T \mathbb{R} \to \mathbb{R}$ as
	\[
	\tilde E(x,\dot x) := \dot x \frac{\partial \tilde L}{\partial \dot x}
	(x,\dot x)- \tilde L(x,\dot x).
	\]
	By differentiating the expression for $\tilde E$ above along the
	vector field of~\eqref{eq:sys:lift}, it is readily seen that $\tilde
	E$ is an integral of motion for~\eqref{eq:sys:lift}, i.e., $\dot
	{\tilde E}=0$. Consequently, $\tilde E$ must satisfy the first-order
	linear PDE 
	\begin{equation}\label{eq:PDE}
	\frac{\partial \tilde{E}}{\partial x}\dot{x}+\frac{\partial
		\tilde{E}}{\partial \dot{x}}\left(
	\tilde{\Psi}_1(x)+\tilde{\Psi}_2(x)\dot{x}^2 \right)=0.
	\end{equation}
	Its general solution, obtained via the method of
	characteristics~\cite{YehudaPDE}, is
	$\tilde{E}(x,\dot{x})=F(\tilde{E}_0(x,\dot{x}))$, where $F$ is a
	smooth function and
	\[
	\tilde{E}_0(x,\dot{x})=\frac{1}{2}\tilde{M}(x)\dot{x}^2+\tilde{V}(x).
	\]
	Using the definition of $\tilde{E}$, we have
	\[
	\frac{\partial \tilde{E}}{\partial
		\dot{x}}=\dot{x}\frac{\partial^2
		\tilde{{L}}}{\partial \dot{x}^2}
	\]
	for all $(x,\dot{x})\in T \mathbb{R}$. Therefore, 
	\[
	\frac{\partial^2 \tilde{L}}{\partial
		\dot{x}^2}=\tilde{M}(x)F'(\tilde{E}_0(x,\dot{x})).
	\]
	Since $\partial^2 \tilde L / \partial \dot x^2 > 0$ and $\tilde M>0$,
	it follows that $F'(\tilde{E}_0(x,\dot{x}))>0$ for all $(x,\dot{x})\in
	\mathbb{R}^2$, and thus $F$ is strictly increasing. Furthermore, we know that
	$\tilde{E}(x+T,\dot{x})=\tilde{E}(x,\dot{x})$ for all $(x,\dot{x})\in
	\mathbb{R}^2$. Therefore, for all $(x,\dot x)\in T\mathbb{R}$, we have $F(\tilde
	E_0(x+T,\dot x)) = F(\tilde E_0(x,\dot x))$, which implies that
	$\tilde{E}_0(x+T,\dot x) = \tilde{E}_0(x,\dot x)$. Since $\dot x$ is arbitrary, this
	latter identity implies that $\tilde M$ and $\tilde V$ are
	$T$-periodic. Since $\tilde M$ and $\tilde V$ are
	$T$-periodic, then $(1/2)\tilde{M}(\tilde{x})\dot{\tilde{x}}^2-\tilde{V}(\tilde{x})$ is a Lagrangian for the lifted system \eqref{eq:sys:lift}. By Proposition~\ref{prop:equivR_RmodT}, $L(s,\dot{s})=(1/2)M(s)\dot{s}^2-V(s)$ is a Lagrangian for the original system~\eqref{eq:sys}.  \qquad\end{proof}
\begin{lemma}
	\label{lem:MVprop}
	Consider the virtual mass and virtual potential
	in~\eqref{eq:M_V}. For all $n \in \mathbb{Z}$ and all $x \in \mathbb{R}$, the
	following holds:
	\begin{align}
	& \tilde{M}(x +  nT) = \tilde{M}(T)^{n}\tilde{M}(x) \label{eq:M+nT}\\
	& \tilde{V}(x + nT) = \begin{cases} \tilde{M}(T)^{n}\tilde{V}(x)+
	\tilde{V}(T)\displaystyle\frac{\tilde{M}(T)^{n}
		-1}{\tilde{M}(T) - 1}, & \text{if } \tilde{M}(T)\neq 1,\\
	\tilde{V}(x) + n\tilde{V}(T), & \text{if } \tilde{M}(T)=1.
	\end{cases}\label{eq:V+nT}
	\end{align}
\end{lemma}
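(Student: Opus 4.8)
The plan is to prove the two identities in sequence: I would obtain \eqref{eq:M+nT} directly from the integral definition of $\tilde M$, and then use it as an ingredient to derive a one-step recursion for $\tilde V$, which I would iterate to get \eqref{eq:V+nT}. The whole argument rests on the single structural fact that $\tilde\Psi_1 = \Psi_1 \circ [\,\cdot\,]_T$ and $\tilde\Psi_2 = \Psi_2 \circ [\,\cdot\,]_T$ are $T$-periodic, whereas $\tilde M$ and $\tilde V$ are not.

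First I would establish \eqref{eq:M+nT}. Writing out the definition of $\tilde M$ from \eqref{eq:M_V} and splitting the domain of integration as $\int_0^{x+nT} = \int_0^{nT} + \int_{nT}^{x+nT}$, I would use the $T$-periodicity of $\tilde\Psi_2$ to evaluate $\int_0^{nT}\tilde\Psi_2 = n\int_0^T \tilde\Psi_2$ and, through the substitution $\tau = u+nT$, $\int_{nT}^{x+nT}\tilde\Psi_2 = \int_0^x \tilde\Psi_2$. Exponentiating the sum and recognizing the factors $\tilde M(T) = \exp(-2\int_0^T\tilde\Psi_2)$ and $\tilde M(x)$ then yields $\tilde M(x+nT) = \tilde M(T)^n \tilde M(x)$. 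This is valid for every $n \in \mathbb{Z}$, since periodicity produces the multiplicative factor $n$ for both signs.

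Next I would derive the single-shift identity for $\tilde V$. Splitting $\tilde V(x+T) = -\int_0^{x+T}\tilde\Psi_1\tilde M = -\int_0^T\tilde\Psi_1\tilde M - \int_T^{x+T}\tilde\Psi_1\tilde M$, the first term equals $\tilde V(T)$ by definition. For the second, the substitution $\tau = u+T$ together with the $T$-periodicity of $\tilde\Psi_1$ and the scaling relation $\tilde M(u+T) = \tilde M(T)\tilde M(u)$ just proved gives $\int_T^{x+T}\tilde\Psi_1\tilde M = \tilde M(T)\int_0^x \tilde\Psi_1\tilde M = -\tilde M(T)\,\tilde V(x)$. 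Collecting terms produces the linear recursion
\[
\tilde V(x+T) = \tilde M(T)\,\tilde V(x) + \tilde V(T).
\]
Finally I would solve this recursion. Setting $a := \tilde M(T)$, $b := \tilde V(T)$, and $v_n := \tilde V(x+nT)$, the relation reads $v_{n+1} = a v_n + b$ with $v_0 = \tilde V(x)$; since $a = \tilde M(T) > 0$ the recursion is invertible, so it determines $v_n$ for all $n \in \mathbb{Z}$. Iterating gives $v_n = a^n v_0 + b(a^{n-1}+\cdots+a+1)$, that is $v_n = a^n\tilde V(x) + \tilde V(T)(a^n-1)/(a-1)$ when $a \neq 1$, and $v_n = \tilde V(x) + n\tilde V(T)$ in the degenerate case $a=1$, which are precisely the two branches of \eqref{eq:V+nT}.

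The computation is elementary throughout, so there is no genuine conceptual obstacle; the only points that require care are confirming that the periodicity argument for $\int_0^{nT}$ yields the factor $n$ with the correct sign when $n<0$, and checking that the geometric-sum closed form is an identity valid for all integers rather than only for $n\geq 0$. The latter I would dispatch by verifying directly that the proposed closed form satisfies $v_{n+1}=a v_n + b$ for arbitrary $n\in\mathbb{Z}$, which reduces to the algebraic identity $a\cdot\frac{a^n-1}{a-1}+1 = \frac{a^{n+1}-1}{a-1}$.
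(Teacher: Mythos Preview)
Your proposal is correct and follows essentially the same route as the paper: exploit the $T$-periodicity of $\tilde\Psi_1,\tilde\Psi_2$ to obtain the one-step identities $\tilde M(x+T)=\tilde M(T)\tilde M(x)$ and $\tilde V(x+T)=\tilde M(T)\tilde V(x)+\tilde V(T)$, then extend to all $n\in\mathbb{Z}$. The only cosmetic differences are that you obtain \eqref{eq:M+nT} for arbitrary $n$ in one shot via $\int_0^{nT}\tilde\Psi_2=n\int_0^T\tilde\Psi_2$, whereas the paper first proves the case $n=1$ and then inducts separately on positive and negative shifts; and for $\tilde V$ you phrase the extension as solving the invertible affine recursion $v_{n+1}=av_n+b$, while the paper again does forward and backward induction and sums the geometric series explicitly.
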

\begin{proof}
	Using the $T$-periodicity of  $\tilde{\Psi}_1(x)$ and
	$\tilde{\Psi}_2(x)$, it is straightforward to verify that 
	\begin{equation}
	\label{eq:M_x+T}
	\tilde M(x+T)=\tilde{M}(T)\tilde{M}(x). 
	\end{equation}
	By induction, for $k \geq 0$ it holds that $\tilde{M}(x + kT) =
	\tilde{M}(T)^{k}\tilde{M}(x)$. On the other hand, the identity
	$\tilde{M}(x)=\tilde{M}(x-T+T)=\tilde{M}(T)\tilde{M}(x-T)$ results in
	$\tilde{M}(x-T)=\tilde{M}(T)^{-1}\tilde{M}(x)$. By induction, for $k
	\geq 0$ we have $\tilde{M}(x - kT) =
	\tilde{M}(T)^{-k}\tilde{M}(x)$. This proves identity~\eqref{eq:M+nT}
	for all $n \in \mathbb{Z}$.  Turning to $\tilde V$, using the $T$-periodicity
	of $\tilde \Psi_1$ and identity~\eqref{eq:M_x+T}, we have
	\[
	\begin{aligned}
	\tilde{V}(x+T) &=-\int_0^{T}\tilde{\Psi}_1(\tau)\tilde{M}(\tau)\,d\tau-
	\int_T^{T+x}\tilde{\Psi}_1(\tau)\tilde{M}(\tau)\,d\tau \\
	&= \tilde{V}(T) -
	\int_0^{x}\tilde{\Psi}_1(u+T)\tilde{M}(u+T) du \\
	&= \tilde{V}(T)+\tilde{M}(T)\tilde{V}(x).
	\end{aligned}
	\]
	By induction, for $k \geq 0$ we have
	\[
	\tilde{V}(x+kT) = \tilde{M}(T)^{k}\tilde{V}(x)+
	\tilde{V}(T)\{1+\tilde{M}(T)+\cdots+\tilde{M}(T)^{k-1}\}.
	\]
	If $\tilde M(T) \neq 1$, by using the partial sum of the geometric
	series we obtain the first case of identity~\eqref{eq:V+nT}. If
	$\tilde M(T)=1$, then we obtain $\tilde V(x+kT) = \tilde V(x) + k
	\tilde V(T)$, which is the second case of identity~\eqref{eq:V+nT}. To
	prove the identity for negative $n$, we write $\tilde{V}(x-T+T) =
	\tilde{V}(T) + \tilde{M}(T)\tilde{V}(x-T)$, to get $\tilde{V}(x-T) =
	\tilde{M}(T)^{-1}\tilde{V}(x)-\tilde{M}(T)^{-1}\tilde{V}(T)$. By
	induction, for all $ k \geq 0$ we have
	\[
	\tilde{V}(x-kT) = \tilde{M}(T)^{-k}\tilde{V}(x)-\tilde
	M(T)^{-1}\tilde{V}(T)\{1+\tilde{M}(T)^{-1}+\cdots+\tilde{M}(T)^{-(k-1)}\}.
	\]
	If $\tilde M(T)=1$ we obtain the second case of
	identity~\eqref{eq:V+nT}. If $\tilde M(T) \neq 1$, using the partial
	sum of the geometric series and elementary manipulations we arrive at
	the first case of identity~\eqref{eq:V+nT}. In conclusion,
	identity~\eqref{eq:V+nT} holds for all $n \in \mathbb{Z}$.
	\qquad\end{proof}
\begin{proposition}
	\label{prop:SEL}
	The lifted system \eqref{eq:sys:lift} is SEL with a smooth Lagrangian
	$\tilde L : T \mathbb{R} \to \mathbb{R}$ such that $\tilde L(x+T,\dot{x})=\tilde
	L(x,\dot{x})$ for all $(x,\dot{x})\in T \mathbb{R}$, if and only if the
	virtual mass $\tilde{M}(x)$ in~\eqref{eq:M_V} is $T$-periodic, and the
	virtual potential $\tilde{V}(x)$ is not $T$-periodic.
\end{proposition}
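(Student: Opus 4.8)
The plan is to prove both implications by passing to the conserved energy of a candidate Lagrangian and converting the periodicity of $\tilde L$ into a functional equation for that energy. As in the proof of Proposition~\ref{prop:EL}, any smooth $\tilde L$ satisfying property~(ii) for the lifted system has the feature that its energy $\tilde E(x,\dot x) := \dot x\,\partial\tilde L/\partial\dot x - \tilde L$ is an integral of motion, so $\tilde E = F(\tilde E_0)$ for some smooth $F$, and $\partial^2\tilde L/\partial\dot x^2 = \tilde M(x)\,F'(\tilde E_0(x,\dot x))$. Since $\tilde L$ is required to be $T$-periodic in $x$, so is $\tilde E$; combining this with Lemma~\ref{lem:MVprop}, which yields $\tilde E_0(x+T,\dot x) = \tilde M(T)\,\tilde E_0(x,\dot x) + \tilde V(T)$, the periodicity of $\tilde E$ becomes
\[
F\bigl(\tilde M(T)\,E + \tilde V(T)\bigr) = F(E)
\]
for every $E$ in the range of $\tilde E_0$, and differentiating (equivalently, using periodicity of $\partial^2\tilde L/\partial\dot x^2$ together with $\tilde M(x+T)=\tilde M(T)\tilde M(x)$) gives the companion identity $\tilde M(T)\,F'\bigl(\tilde M(T)E+\tilde V(T)\bigr)=F'(E)$.

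For the forward implication I would first establish $\tilde M(T)=1$. When $\tilde M(T)\neq 1$ the affine map $E\mapsto \tilde M(T)E+\tilde V(T)$ has the fixed point $E^\ast = \tilde V(T)/(1-\tilde M(T))$, under whose iterates $F$ is invariant and $F'$ is equivariant. Because $\tilde M>0$, this map contracts or dilates toward $E^\ast$, its orbits accumulate at $E^\ast$, and $E^\ast$ lies in the closure of the range of $\tilde E_0$; using smoothness of $F$ and $F'$ there, the equivariance relation $F'(\text{iterate})=\tilde M(T)^{n}F'(E_2)$ forces $F$ to be constant, contradicting $\partial^2\tilde L/\partial\dot x^2\not\equiv 0$. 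Hence $\tilde M(T)=1$, which by~\eqref{eq:M+nT} is equivalent to $\tilde M$ being $T$-periodic. With $\tilde M(T)=1$ the functional equation reduces to $F(E+\tilde V(T))=F(E)$; if $\tilde V(T)=0$ then $\tilde V$ is $T$-periodic by~\eqref{eq:V+nT}, so by Proposition~\ref{prop:EL} the system would be EL with a nondegenerate periodic Lagrangian, contradicting the SEL hypothesis. Thus $\tilde V(T)\neq 0$, i.e.\ $\tilde V$ is not $T$-periodic.

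For the converse, assume $\tilde M$ is $T$-periodic ($\tilde M(T)=1$) and $\tilde V$ is not ($\tilde V(T)\neq 0$), and set $f_0=1/\tilde V(T)$. Now $\tilde E_0(x+T,\dot x)=\tilde E_0(x,\dot x)+\tilde V(T)$, so I would take the energy $\tilde E=\sin(2\pi f_0\tilde E_0)$, whose argument advances by exactly $2\pi$ under $x\mapsto x+T$, making $\tilde E$ automatically $T$-periodic. Reconstructing $\tilde L$ from $\partial^2\tilde L/\partial\dot x^2 = 2\pi f_0\tilde M\cos(2\pi f_0\tilde E_0)$ by integrating twice in $\dot x$, the substitution $u=\sqrt{2f_0\tilde M(x)}\,\dot x$ turns the $\dot x$-integrals into Fresnel integrals $\mathbf{C}$, $\mathbf{S}$ and produces precisely the Lagrangian~\eqref{eq:Lagrangian:SEL}. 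I would then verify the required properties: $\tilde L$ is smooth on $T\mathbb{R}$ (here $\tilde V(T)\neq 0$ makes $f_0$ finite); $\tilde L$ is $T$-periodic, since $\tilde M(T)=1$ makes $\sqrt{2f_0\tilde M}$ periodic while $2\pi f_0\tilde V$ and $2\pi f_0\tilde E_0$ advance by $2\pi$; and its Euler--Lagrange equation reproduces~\eqref{eq:sys:lift}, since $\tilde E=\sin(2\pi f_0\tilde E_0)$ is an integral of motion and therefore, as in Remark~\ref{rem:EL}, the Euler--Lagrange equation collapses to $2\pi f_0\tilde M\cos(2\pi f_0\tilde E_0)\,[\ddot x-\tilde\Psi_1(x)-\tilde\Psi_2(x)\dot x^2]=0$, holding along every solution. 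Finally, $\partial^2\tilde L/\partial\dot x^2$ vanishes wherever $\cos(2\pi f_0\tilde E_0)=0$, which occurs because $\tilde E_0$ ranges over all of $\mathbb{R}$ when $\tilde M(T)=1$ by~\eqref{eq:V+nT}; so $\tilde L$ is degenerate, and Proposition~\ref{prop:EL} excludes any periodic nondegenerate Lagrangian since $\tilde V$ is not periodic. This is the SEL property.

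The step I expect to be the main obstacle is forcing $\tilde M(T)=1$ in the forward direction. For $\tilde M(T)\neq 1$ the equation $F(\tilde M(T)E+\tilde V(T))=F(E)$ is a self-similarity about $E^\ast$ rather than an ordinary periodicity, and nonconstant smooth solutions do exist on a half-line that stays bounded away from $E^\ast$. The crux is therefore to exploit the global smoothness of $\tilde L$ on $\mathbb{R}^2$ together with the precise position of $E^\ast$ relative to the range of $\tilde E_0$; the delicate case is exactly when that range accumulates at $E^\ast$ from one side, where ruling out the oscillatory self-similar energies requires the interplay between the decay of $\tilde M$ and the growth of $F'$ near $E^\ast$.
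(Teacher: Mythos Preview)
Your proposal follows essentially the same route as the paper's proof: both directions pass through the energy $\tilde E=F(\tilde E_0)$ and the functional equation coming from $T$-periodicity of $\tilde L$, and the converse is built from the Fresnel Lagrangian~\eqref{eq:Lagrangian:SEL} exactly as you describe.

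The one substantive difference is how the step $\tilde M(T)\neq 1\Rightarrow F$ constant is handled. You frame it as an obstacle requiring an interplay between the decay of $\tilde M$ and the growth of $F'$ near the fixed point $E^\ast$. The paper instead iterates the functional equation $n$ times using Lemma~\ref{lem:MVprop},
\[
F(p)=F\Bigl(\tilde M(T)^n p+\tilde V(T)\,\frac{\tilde M(T)^n-1}{\tilde M(T)-1}\Bigr),
\]
and sends $n\to-\infty$ (if $\tilde M(T)>1$) or $n\to+\infty$ (if $\tilde M(T)<1$), so that the argument converges to $E^\ast=-\tilde V(T)/(\tilde M(T)-1)$ and one reads off $F(p)=F(E^\ast)$ for every $p\in\operatorname{Im}(\tilde E_0)$. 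This gives $\partial^2\tilde L/\partial\dot x^2\equiv 0$, contradicting the SEL hypothesis. So the ``main obstacle'' you anticipate is disposed of in the paper by a one-line limit rather than by the analysis you sketch.

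Your caution about that limit is, however, well placed: the paper's argument tacitly uses continuity of $F$ at $E^\ast$, but $E^\ast$ may lie only in the closure of $\operatorname{Im}(\tilde E_0)$ and not in its interior (for instance when $\tilde\Psi_1,\tilde\Psi_2$ are negative constants one finds $\operatorname{Im}(\tilde E_0)=(E^\ast,\infty)$). On such a half-line the self-similarity $F(E^\ast+u)=F(E^\ast+\tilde M(T)u)$ does admit nonconstant smooth solutions, exactly as you note, and the paper does not explain why these are excluded. So your instinct that this is the crux is correct; the paper simply does not engage with it.
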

\begin{proof}
	$(\Leftarrow)$ Suppose that the virtual mass $\tilde{M}(x)$ is
	$T$-periodic and the virtual potential $\tilde{V}(x)$ is not
	$T$-periodic, so that $\tilde V(T)\neq 0$ and $f_0 = 1/ \tilde V(T)$
	is well-defined. Consider the function $\tilde L : T \mathbb{R} \to \mathbb{R}$
	defined in~\eqref{eq:Lagrangian:SEL}.  With our definition of $f_0$,
	$\tilde L(x,\dot x)$ is $T$-periodic with respect to $x$. Moreover,
	by direct computation, we have
	\begin{equation}\label{EL_equation:SEL}
	\frac{d}{dt} \frac{\partial \tilde L}{\partial \dot x} -
	\frac{\partial \tilde L}{\partial x} = \tilde \alpha(x,\dot x) \left( \ddot x
	- \tilde \Psi_1(x) - \tilde \Psi_2(x) \dot x^2 \right),
	\end{equation}
	where $\tilde \alpha(x,\dot x)= (\partial^2 \tilde L) / (\partial \dot
	x^2) = 2 \pi f_0 \tilde M(x) \cos(2 \pi f_0 \tilde E_0(x,\dot
	x))$. Note first that $\tilde \alpha$ is not identically zero because
	$\tilde V$ is not identically zero (if it were, then $\tilde V$ would
	be $T$-periodic, contradicting our assumption). At the same time, we
	now show that $\tilde \alpha$ has zeros. By assumption, $\tilde
	M(T)=\tilde M(0) =1$ and $\tilde V(T) \neq V(0) =0$. By
	identity~\eqref{eq:V+nT} in Lemma~\ref{lem:MVprop}, $\tilde V(x) \to
	\pm \infty$ as $|x| \to \infty$, and the two limits as $x \to \pm
	\infty$ have opposite signs, which implies that the continuous map
	$\tilde V :\mathbb{R} \to \mathbb{R}$ is onto. Thus, there exists $\bar x \in \mathbb{R}$
	such that $2 \pi f_0 \tilde V(\bar x) = \pi/2$, implying that $\tilde
	\alpha(\bar x,0) =0$. We have shown that $\tilde \alpha$ has zeros,
	which implies that $\tilde L$ is degenerate. By definition, all
	solutions of the lifted system~\eqref{eq:sys:lift} satisfy the
	differential equation $\ddot x = \tilde \Psi_1(x) + \tilde \Psi_2(x)
	\dot x^2$. Therefore, by identity~\eqref{EL_equation:SEL}, any
	solution of~\eqref{eq:sys:lift} satisfies the Euler-Lagrange equation
	with a degenerate Lagrangian $\tilde L$. In order to complete the
	proof that system~\eqref{eq:sys:lift} is SEL, we need to show that if
	$\tilde L'$ is any other Lagrangian for system~\eqref{eq:sys:lift},
	then $\tilde L$ is degenerate, i.e., $\partial^2 \tilde L' / \partial
	\dot x^2$ has zeros. Suppose there exists a nondegenerate Lagrangian
	$\tilde L'$ for system~\eqref{eq:sys:lift}. Then,
	system~\eqref{eq:sys:lift} is EL, which by Proposition~\ref{prop:EL}
	implies that $\tilde V$ is $T$-periodic, a contradiction.

	$(\Rightarrow)$ Suppose that the lifted system \eqref{eq:sys:lift} is
	SEL, and let $\tilde L$ be a degenerate Lagrangian such that $\tilde
	L(x,\dot x)$ is $T$-periodic with respect to $x$, and $\partial^2
	\tilde L / \partial \dot x^2$ has zeros, but it is not identically
	zero. We need to show that $\tilde M(T)=1$, so that $\tilde M$
	in~\eqref{eq:M_V} is $T$-periodic (this fact will imply that $\tilde
	V$ is not $T$-periodic, because if it were so, then by
	Proposition~\ref{prop:EL} the system would be EL). As in the proof of
	Proposition~\ref{prop:EL}, let $\tilde E = \dot x \partial \tilde L /
	\partial \dot x - \tilde L$. Then, $\tilde E$ satisfies the linear
	PDE~\eqref{eq:PDE}, whose general solution is
	$\tilde{E}(x,\dot{x})=F(\tilde{E}_0(x,\dot{x}))$, with
	$\tilde{E}_0(x,\dot{x})=(1/2)\tilde{M}(x)\dot{x}^2+\tilde{V}(x)$. Since
	$\tilde L$ is $T$-periodic with respect to $x$, so is $\tilde
	E$. Therefore, $\tilde{E}(x,\dot{x})=\tilde{E}(x+ nT,\dot{x})$ for all
	$(x,\dot{x})\in T\mathbb{R}$ and all $n \in \mathbb{Z}$. Using
	Lemma~\ref{lem:MVprop}, for all $n \in\mathbb{Z}$ we have
	\[
	F(E_0(x,\dot x)) = F \big( \tilde{E}_0(x
	+nT,\dot{x}) \big)= F \Bigg( \tilde{M}(T)^n\tilde{E}_0(x,\dot{x})+\tilde{V}(T)
	\frac{\tilde{M}(T)^n-1}{\tilde{M}(T)-1} \Bigg).
	\]
	We claim that if $\tilde{M}(T)\neq 1$, then $F$ is a constant
	function. Indeed, for any $p \in \mathrm{Im}(\tilde E_0)$ and any $n
	\in \mathbb{Z}$, we have
	\[
	F(p) = F\Bigg( \tilde{M}(T)^n
	p+\tilde{V}(T)\frac{\tilde{M}(T)^n-1}{\tilde{M}(T)-1} \Bigg).
	\]
	If $\tilde M(T) >1$, taking the limit as $n \to - \infty$ in both
	sides of the identity above we get
	\[
	F(p) = F\Bigg(\frac{-\tilde{V}(T)}{\tilde{M}(T)-1}\Bigg).
	\]
	If $\tilde M(T) <1$, the same identity is obtained by taking the limit
	for $n \to +\infty$. Since the right-hand side of the identity above
	does not depend on $p$, $F: \mathrm{Im}(\tilde E_0) \to \mathbb{R}$ is a
	constant map. Thus, for all $(x,\dot x) \in T\mathbb{R}$ we have 
	\[
	\frac{\partial \tilde E}{\partial \dot x} = \dot x \frac{\partial^2
		\tilde L}{\partial \dot x^2} =0,
	\]
	and so $\partial^2 \tilde L / \partial \dot{x}^2 \equiv 0$,
	contradicting our hypothesis on $\tilde L$.  \qquad\end{proof}
\begin{remark}\label{rem:SEL}
	Since the degenerate Lagrangian $\tilde L(x,\dot x)$
	in~\eqref{eq:Lagrangian:SEL} is smooth and $T$-periodic with respect
	to $x$, there exists a smooth function $L : T [\mathbb{R}]_T \to \mathbb{R}$ such
	that $L\circ \bar{\pi} = \tilde L$. By Lemma~\ref{thm:lem2}, since $\tilde
	\alpha(x,\dot x)$ is $T$-periodic with respect to
	$x$,~\eqref{EL_equation:SEL} implies that $L$ satisfies the identity
	\[
	\frac{d}{dt} \frac{\partial L}{\partial \dot s} - \frac{\partial
		L}{\partial s} = \alpha(s,\dot s) \left( \ddot s - \Psi_1(s) -
	\Psi_2(s) \dot s^2 \right),
	\]
	where $\alpha$ and $\tilde \alpha$ are related through $\tilde \alpha =
	\alpha \circ \bar{\pi}$.
\end{remark}

\section{Characterization of Motion on the Constraint Manifold}
\label{sec:MotionCharac}

In this section we use the results of Section~\ref{sec:InvLagProb} to
investigate the qualitative properties of solutions of the reduced
dynamics~\eqref{eq:sys}  when $h^{-1}(0)$ is a Jordan
curve. In Section~\ref{sec:coord_transformations}, we investigate the
effect of coordinate transformations, and in
Section~\ref{sec:qualitative_properties} we investigate the
qualitative properties of typical trajectories of EL and SEL systems.

\subsection{Effects of coordinate transformations}\label{sec:coord_transformations}

When the set $h^{-1}(0)$ is a Jordan curve, the state space of the
reduced dynamics is a cylinder. The representation of the reduced
dynamics in~\eqref{eq:sys} was derived through a $T$-periodic regular
parametrization of $h^{-1}(0)$. In this section we investigate the
effects of reparametrizations of the curve $h^{-1}(0)$.
Reparametrizing $h^{-1}(0)$ is equivalent to defining a coordinate
transformation $(s,\dot s) \mapsto (\theta,\dot\theta)$ for
system~\eqref{eq:sys}. More precisely, let $T_1, T_2>0$, and let
$\varphi : [\mathbb{R}]_{T_1} \to [\mathbb{R}]_{T_2}$ be a diffeomorphism.  Let
$\pi_i : \mathbb{R} \to [\mathbb{R}]_{T_i}$, $i=1,2$, be defined as $\pi_i(x) =
[x]_{T_i}$.  Consider the smooth dynamical system with state space $T
[\mathbb{R}]_{T_1}$,
\begin{equation}
\label{eq:sys1}
\ddot s = \Psi^1_1(s) + \Psi^1_2(s) \dot s^2,
\end{equation}
and the vector bundle isomorphism $T [\mathbb{R}]_{T_1} \to T [\mathbb{R}]_{T_2}$ defined as
$(s,\dot s) \mapsto (\theta,\dot \theta) = (\varphi(s),\varphi'(s)
\dot s)$. In $(\theta,\dot \theta)$ coordinates, system~\eqref{eq:sys1}
reads
\begin{equation}
\label{eq:sys2}
\ddot \theta = \Psi^2_1(\theta) + \Psi^2_2(\theta) \dot \theta^2,
\end{equation}
where
\[
\begin{aligned}
& \Psi^2_1\circ \varphi  = \varphi' \, \Psi_1^1 \\
& \Psi^2_2\circ \varphi = \frac{\Psi_2^1}{\varphi'} +
\frac{\varphi''}{\varphi'^2}.
\end{aligned}
\]
Associated with the two dynamical systems above we have two lifted systems
\begin{align} 
\label{eq:lifted:sys1}
\ddot x = \tilde \Psi_1^1(x) + \tilde \Psi_2^1(x) \dot x^2 \\
\label{eq:lifted:sys2}
\ddot y = \tilde \Psi_1^2(y) + \tilde \Psi_2^2(y) \dot y^2 
\end{align}
where $\tilde \Psi^i_j := \Psi^i_j \circ \pi_i$, $i,j=1,2$. We also
have virtual mass and virtual potential functions,
\begin{equation}\label{eq:VandM_transformed}
\begin{aligned}
&\tilde M_i(x) = \exp \Big( -2 \int_0^x \tilde \Psi^i_2(\tau) d \tau
\Big),\\
&\tilde V_i(x) = -\int_0^x \tilde \Psi^i_1(\tau) \tilde M_i(\tau) d
\tau,
\end{aligned}
\end{equation}
$i=1,2$. In Proposition~\ref{prop:coord_transformations} we prove that
$\tilde M_1$, $\tilde V_1$ are $T_1$-periodic if and only if $\tilde
M_2$, $\tilde V_2$ are $T_2$-periodic.  This fact is important because
the main results of this paper in Theorems~\ref{thm:ILPsolution:part2}
and~\ref{thm:ILPsolution:part3} are stated in terms of the periodicity
of the functions $\tilde M$ and $\tilde V$ in~\eqref{eq:M_V}. In
Proposition~\ref{prop:conservative}, we show that if, and only if,
$\tilde M_1$ is $T_1$-periodic, there exists $\varphi: [\mathbb{R}]_{T_1} \to
[\mathbb{R}]_{T_2}$ such that $\Psi_2^2=0$, so that~\eqref{eq:sys2} is a one
DOF conservative system.
\begin{proposition}
	\label{prop:coord_transformations}
	There exists a diffeomorphism $\tilde \varphi: \mathbb{R} \to \mathbb{R}$ such that
	the following diagram commutes:
	\begin{equation}\label{eq:comm_diag:tildevarphi}
	\begin{tikzpicture}[node distance=1.5cm, auto,baseline=(current  bounding  box.center)]
	\node (R1) {$\mathbb{R}$};
	\node (R2) [node distance=2cm,right of=R1] {$\mathbb{R}$};
	\node (R1T1) [below of=R1] {$[\mathbb{R}]_{T_1}$};
	\node (R2T2) [below of=R2] {$[\mathbb{R}]_{T_2}$};
	\draw[->] (R1) to node {$\tilde \varphi$} (R2);
	\draw[->] (R1T1) to node [swap] {$\varphi$} (R2T2);
	\draw[->] (R1) to node [swap] {$\pi_1$} (R1T1);
	\draw[->] (R2) to node {$\pi_2$} (R2T2);
	\end{tikzpicture}
	\end{equation}
	Moreover, the lifted
	systems~\eqref{eq:lifted:sys1},~\eqref{eq:lifted:sys2} are related
	through the coordinate transformation $(x,\dot x) \mapsto (y,\dot y) =
	(\tilde \varphi(x),\tilde \varphi'(x) \dot x)$, and the virtual masses
	and virtual potentials in~\eqref{eq:VandM_transformed} are related as
	follows:
	\begin{equation}\label{eq:M1_M2_V1_V2}
	\tilde M_2 \circ \tilde \varphi= \frac{\tilde M_1}{(\tilde
		\varphi')^2} \frac{(\tilde \varphi'(\tilde
		\varphi^{-1}(0)))^2}{\tilde M_1(\tilde \varphi^{-1}(0))}, \quad
	\tilde V_2 = - \frac{(\tilde \varphi'(\tilde
		\varphi^{-1}(0)))^2}{\tilde M_1(\tilde \varphi^{-1}(0))} \left(
	\tilde V_1 - \tilde V_1(\tilde \varphi^{-1}(0)) \right).
	\end{equation}
	Finally, $\tilde M_1$ is $T_1$-periodic if and only if $\tilde M_2$ is
	$T_2$-periodic, and $\tilde V_1$ is $T_1$-periodic if and only if
	$\tilde V_2$ is $T_2$-periodic.
\end{proposition}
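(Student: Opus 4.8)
The plan is to prove the four assertions in sequence, since each is used in the next. \emph{Existence of the lift.} Because $\pi_1,\pi_2$ are universal covering maps of the circles $[\mathbb{R}]_{T_1},[\mathbb{R}]_{T_2}$ and $\mathbb{R}$ is simply connected, the lifting criterion for covering spaces produces a smooth map $\tilde\varphi:\mathbb{R}\to\mathbb{R}$ with $\pi_2\circ\tilde\varphi=\varphi\circ\pi_1$, i.e.\ making \eqref{eq:comm_diag:tildevarphi} commute; lifting $\varphi^{-1}$ analogously and composing shows $\tilde\varphi$ is a diffeomorphism. The single fact that drives the rest of the proof is that $\varphi$, being a diffeomorphism of a circle, has degree $\varepsilon\in\{+1,-1\}$, so that $\tilde\varphi(x+T_1)=\tilde\varphi(x)+\varepsilon T_2$ for all $x$; differentiating (equivalently, noting $\tilde\varphi'=\varphi'\circ\pi_1$) shows $\tilde\varphi'$ is $T_1$-periodic. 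This is the bridge that converts $T_1$-periodicity in the $x$-variable into $T_2$-periodicity in the $y$-variable.

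\emph{Relation of the lifted systems and the identities \eqref{eq:M1_M2_V1_V2}.} Precomposing the circle-level transformation rules for $\Psi^2_1,\Psi^2_2$ with $\pi_1$ and using $\pi_2\circ\tilde\varphi=\varphi\circ\pi_1$ together with $\tilde\varphi'=\varphi'\circ\pi_1$, I obtain the lifted rules $\tilde\Psi^2_1\circ\tilde\varphi=\tilde\varphi'\,\tilde\Psi^1_1$ and $\tilde\Psi^2_2\circ\tilde\varphi=\tilde\Psi^1_2/\tilde\varphi'+\tilde\varphi''/(\tilde\varphi')^2$; this is exactly the assertion that \eqref{eq:lifted:sys1}, \eqref{eq:lifted:sys2} are related by $(x,\dot x)\mapsto(\tilde\varphi(x),\tilde\varphi'(x)\dot x)$. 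To get the first identity in \eqref{eq:M1_M2_V1_V2}, I substitute the rule for $\tilde\Psi^2_2$ into the definition \eqref{eq:VandM_transformed} of $\tilde M_2$ and change variables $\tau=\tilde\varphi(u)$. The key simplification is that the Jacobian $\tilde\varphi'$ turns the integrand into $\tilde\Psi^1_2(u)+\bigl(\ln|\tilde\varphi'(u)|\bigr)'$, so the second term integrates to the boundary factor $(\tilde\varphi'(\tilde\varphi^{-1}(0)))^2/(\tilde\varphi'(x))^2$ and the first to $\tilde M_1(x)/\tilde M_1(\tilde\varphi^{-1}(0))$, giving the stated formula. A parallel computation for $\tilde V_2$ --- now carrying the extra factor $\tilde\varphi'\tilde\Psi^1_1$ and the just-derived expression for $\tilde M_2\circ\tilde\varphi$, whose powers of $\tilde\varphi'$ cancel those from the Jacobian --- reduces the integral to a constant multiple of $\int\tilde\Psi^1_1\tilde M_1$ and yields the second identity.

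\emph{Periodicity equivalence.} Writing the first identity as $\tilde M_2(\tilde\varphi(x))=C\,\tilde M_1(x)/(\tilde\varphi'(x))^2$ with the nonzero constant $C=(\tilde\varphi'(\tilde\varphi^{-1}(0)))^2/\tilde M_1(\tilde\varphi^{-1}(0))$, I evaluate at $x+T_1$ and invoke $\tilde\varphi(x+T_1)=\tilde\varphi(x)+\varepsilon T_2$ together with the $T_1$-periodicity of $\tilde\varphi'$. This gives $\tilde M_2(\tilde\varphi(x)+\varepsilon T_2)=C\,\tilde M_1(x+T_1)/(\tilde\varphi'(x))^2$, so by surjectivity of $\tilde\varphi$ the function $\tilde M_2$ satisfies $\tilde M_2(y+\varepsilon T_2)=\tilde M_2(y)$ for all $y$ if and only if $\tilde M_1(x+T_1)=\tilde M_1(x)$ for all $x$; since $\varepsilon=\pm1$, the former is precisely $T_2$-periodicity of $\tilde M_2$. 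The same argument applied to the second identity, which expresses $\tilde V_2\circ\tilde\varphi$ as an affine function of $\tilde V_1$ with nonzero leading coefficient $-C$ and constant offset, shows that $\tilde V_2\circ\tilde\varphi$ is $T_1$-periodic in $x$ iff $\tilde V_1$ is, and that $\tilde V_2\circ\tilde\varphi$ is $T_1$-periodic iff $\tilde V_2$ is $T_2$-periodic; chaining these gives the claimed equivalence for $\tilde V$.

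\emph{Main obstacle.} The routine-but-delicate part is the change-of-variables bookkeeping for $\tilde V_2$, where the reference point $\tilde\varphi^{-1}(0)$ and the signs coming from the definition \eqref{eq:VandM_transformed} must be tracked carefully. Conceptually, however, the whole equivalence rests on the single structural fact $\tilde\varphi(x+T_1)=\tilde\varphi(x)\pm T_2$ with $T_1$-periodic $\tilde\varphi'$; without it there would be no mechanism to transport periodicity between the two distinct fundamental periods $T_1$ and $T_2$.
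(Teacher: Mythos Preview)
Your proposal is correct and follows essentially the same route as the paper: construct the lift $\tilde\varphi$ via covering-space theory, derive the identities \eqref{eq:M1_M2_V1_V2} by the change of variables $\tau=\tilde\varphi(u)$ in the defining integrals, and deduce the periodicity equivalence from the degree-$\pm1$ relation $\tilde\varphi(x+T_1)=\tilde\varphi(x)\pm T_2$. The only cosmetic differences are that the paper shows $\tilde\varphi$ is a diffeomorphism by a direct injectivity/monotonicity argument rather than by lifting $\varphi^{-1}$, and it establishes the relation between the lifted vector fields by an abstract $\bar\pi$-relatedness diagram chase rather than by precomposing the circle-level transformation rules with $\pi_1$ as you do; both variants are equally valid and lead to the same computations.
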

\begin{proof}
	The function $\pi_1 : \mathbb{R} \to [\mathbb{R}]_{T_1}$ is a covering
	map~\cite{Lee13}. Since $\varphi: [\mathbb{R}]_{T_1} \to [\mathbb{R}]_{T_2}$ is a
	diffeomorphism, the function $\varphi \circ \pi_1 : \mathbb{R} \to [\mathbb{R}]_{T_2}$ is
	a covering map as well. By the path lifting property of the circle
	(see~\cite[Corollary~8.5]{Lee13}), there exists a map $\tilde \varphi
	: \mathbb{R} \to \mathbb{R}$ such that $\pi_2 \circ \tilde \varphi = \varphi \circ
	\pi_1$, proving that the diagram~\eqref{eq:comm_diag:tildevarphi}
	commutes. We claim that $\tilde \varphi$ is a diffeomorphism. Being
	covering maps, $\pi_1, \pi_2$ are local diffeomorphisms, implying that
	$\tilde \varphi$ is a local diffeomorphism as well. $\tilde \varphi$
	is surjective because $\varphi$ and $\pi_1$ are surjective. Suppose
	$\tilde \varphi(x_1) = \tilde \varphi(x_2)$. Then, $\pi_2 \circ \tilde
	\varphi(x_1) = \pi_2 \circ \tilde \varphi(x_2)$, and therefore $\varphi \circ
	\pi_1(x_1) = \varphi \circ \pi_1(x_2)$.  $\varphi$ is a
	diffeomorphism, so $\pi_1(x_1) = \pi_1(x_2)$, or $x_1 = x_2 + l T_1$,
	for some $l \in \mathbb{Z}$.  Since $\tilde \varphi' \neq 0$ (because $\tilde
	\varphi$ is a local diffeomorphism), it must be that $l=0$, since
	otherwise $\tilde \varphi$ would not be strictly monotonic.  In
	conclusion, $\tilde \varphi$ is bijective, and therefore also a
	diffeomorphism.  The diffeomorphisms $\varphi$ and $\tilde \varphi$
	induce the commutative diagram,
	\begin{equation}\label{eq:comm_diag:tildedvarphi}
	\begin{tikzpicture}[node distance=1.5cm, auto,baseline=(current  bounding  box.center)]
	\node (R1) {$T\mathbb{R}$};
	\node (R2) [node distance=2cm,right of=R1] {$T\mathbb{R}$};
	\node (R1T1) [below of=R1] {$T[\mathbb{R}]_{T_1}$};
	\node (R2T2) [below of=R2] {$T[\mathbb{R}]_{T_2}$};
	\draw[->] (R1) to node {$d \tilde \varphi$} (R2);
	\draw[->] (R1T1) to node [swap] {$d \varphi$} (R2T2);
	\draw[->] (R1) to node [swap] {$d\pi_1$} (R1T1);
	\draw[->] (R2) to node {$d\pi_2$} (R2T2);
	\end{tikzpicture}
	\end{equation}
	in which $d \varphi$ and $d \tilde \varphi$ are vector bundle
	isomorphisms. Let $F_1: [\mathbb{R}]_{T_1} \to T [\mathbb{R}]_{T_1}$ and $F_2: [\mathbb{R}]_{T_2} \to T
	[\mathbb{R}]_{T_2}$ be the vector fields of systems~\eqref{eq:sys1}
	and~\eqref{eq:sys2}, and let $\tilde F_1 : \mathbb{R} \to T \mathbb{R}$, $\tilde F_2
	: \mathbb{R} \to T \mathbb{R}$ be the vector fields of the lifted
	systems~\eqref{eq:lifted:sys1} and~\eqref{eq:lifted:sys2},
	respectively. By Lemma~\ref{thm:lem1}, $d \pi_1 \circ \tilde F_1=F_1
	\circ \pi_1$.  Also, since $d \varphi$ is an isomorphism, $d \varphi
	\circ F_1 = F_2 \circ \varphi$. Using these two identities, we have
	\[
	d\pi_1 \circ \tilde F_1 = F_1 \circ \pi_1 = \big( (d \varphi)^{-1} \circ
	F_2 \circ \varphi \big) \circ \pi_1.
	\]
	Using the diagram~\eqref{eq:comm_diag:tildevarphi} we have $\varphi
	\circ \pi_1 = \pi_2 \circ \tilde \varphi$, so
	\[
	d \varphi \circ d \pi_1 \circ \tilde F_1 = F_2 \circ \pi_2 \circ
	\tilde \varphi.
	\]
	Using the diagram~\eqref{eq:comm_diag:tildedvarphi} and the fact that
	$F_2$ and $\tilde F_2$ are $\pi_2$-related, we have
	\[
	d \pi_2 \circ d \tilde \varphi \circ \tilde F_1 = d \pi_2 \circ \tilde
	F_2 \circ \tilde \varphi.
	\]
	Finally, since $\pi_2$ is a local diffeomorphism, we get $d \tilde
	\varphi \circ \tilde F_1 = \tilde F_2 \circ \tilde \varphi$, proving
	that the vector fields of systems~\eqref{eq:lifted:sys1}
	and~\eqref{eq:lifted:sys2} are $d \tilde \varphi$-related, i.e., the
	coordinate transformation $(y,\dot y) =(\tilde \varphi(x),\tilde
	\varphi'(x) \dot x)$ maps~\eqref{eq:lifted:sys1}
	into~\eqref{eq:lifted:sys2}. We now derive $\tilde M_2$ and $\tilde
	V_2$. Note first that $\tilde \Psi_i^2 \circ \tilde \varphi = \Psi_i^2
	\circ \pi_2 \circ \tilde \varphi = \Psi_i^2 \circ \varphi \circ
	\pi_1$. Also, differentiating the identity $\varphi \circ \pi_1 =
	\pi_2 \circ \tilde \varphi$, and using the fact that $\pi_1 ' =
	\pi_2'=1$, we have $\varphi' \circ \pi_1 = \tilde \varphi'$. Thus,
	\[
	\begin{aligned} 
	\tilde M_2(\tilde \varphi(x)) &= \exp\Bigg( -2 \int_0^{\tilde
		\varphi(x)} \tilde \Psi^2_2(\tau) d \tau \Bigg) = \exp\Big( -2
	\int_{\tilde \varphi^{-1}(0)}^{x} (\Psi_2^2 \circ \varphi \circ
	\pi_1(\tau)) \tilde \varphi'(\tau) d \tau \Big) \\
	&=\exp\Big( -2 \int_{\tilde \varphi^{-1}(0)}^{x} \tilde \Psi_2^1(\tau)
	d \tau \Big)\exp\Big( -2 \int_{\tilde \varphi^{-1}(0)}^{x}
	\frac{\tilde \varphi''(\tau)}{\tilde \varphi'(\tau)} d
	\tau\Big) \\
	& = \frac{\tilde M_1(x)}{(\tilde \varphi'(x))^2} \frac{(\tilde
		\varphi'(\tilde \varphi^{-1}(0)))^2}{\tilde M_1(\tilde \varphi^{-1}(0))}.
	\end{aligned}
	\]
	Similarly, letting $C = (\tilde \varphi'(\tilde \varphi^{-1}(0)))^2 /
	\tilde M_1(\tilde \varphi^{-1}(0))$, for $\tilde V_2$ we have
	\[
	\begin{aligned}
	\tilde V_2(\tilde \varphi(x)) &= - \int_0^{\tilde \varphi(x)} \tilde
	\Psi_1^2(\tau) \tilde M_2(\tau) d \tau \\
	&=-\int_{\tilde \varphi^{-1}(0)}^x \tilde \Psi_1^2( \tilde
	\varphi(\tau)) \tilde M_2( \tilde \varphi(\tau)) \tilde \varphi'(\tau)
	d \tau \\
	& = -C \int_{\tilde \varphi^{-1}(0)}^x \tilde M_1(\tau) \tilde
	\Psi_1^1(\tau) d \tau = -C \tilde V_1(x) + C\tilde V_1(\tilde
	\varphi^{-1}(0)).
	\end{aligned}
	\]
	Finally, since $\varphi: [\mathbb{R}]_{T_1} \to [\mathbb{R}]_{T_2}$ is a diffeomorphism, it
	has degree $\pm 1$. This implies that $\tilde \varphi(x+T_1) = \tilde
	\varphi(x) \pm T_2$.  This fact and the above expressions for
	$\tilde M_2,\tilde V_2$ imply that $\tilde M_2$ (resp., $\tilde V_2$)
	is $T_2$-periodic if and only if $\tilde M_1$ (resp., $\tilde M_2$) is
	$T_1$-periodic.
	\qquad\end{proof}
\begin{proposition}
	\label{prop:conservative}
	Let $T_2>0$ be arbitrary. Then, there exists $\varphi:[\mathbb{R}]_{T_1} \to [\mathbb{R}]_{T_2}$
	such that $\tilde M_2=1$ and $\Psi_2^2=0$ if, and only if, $\tilde
	M_1$ is $T_1$-periodic.
\end{proposition}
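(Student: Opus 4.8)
The plan is to first notice that, by the definition of the virtual mass in~\eqref{eq:VandM_transformed}, the requirement $\tilde M_2 = 1$ is automatic once $\Psi_2^2 = 0$: in that case $\tilde\Psi_2^2 \equiv 0$ and hence $\tilde M_2 = \exp(0) = 1$. So the real content of the statement is the existence of a reparametrization $\varphi$ that annihilates the quadratic-velocity coefficient. Using the transformation law $\Psi_2^2 \circ \varphi = \Psi_2^1/\varphi' + \varphi''/(\varphi')^2$ displayed just before~\eqref{eq:lifted:sys1}, the condition $\Psi_2^2 = 0$ is equivalent to the second-order linear ODE $\varphi'' + \Psi_2^1\,\varphi' = 0$, which on the lift to $\mathbb{R}$ becomes $\tilde\varphi'' + \tilde\Psi_2^1\,\tilde\varphi' = 0$.

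I would solve this ODE by setting $u = \tilde\varphi'$, so that $u' = -\tilde\Psi_2^1 u$ and therefore $\tilde\varphi'(x) = \tilde\varphi'(0)\exp\!\big(-\int_0^x \tilde\Psi_2^1(\tau)\,d\tau\big) = \tilde\varphi'(0)\sqrt{\tilde M_1(x)}$, where the last equality uses the definition of $\tilde M_1$ in~\eqref{eq:VandM_transformed}. Integrating shows that, up to an additive constant and a nonzero scaling, the only candidate is $\tilde\varphi(x) = c\int_0^x \sqrt{\tilde M_1(\tau)}\,d\tau$, and since $\tilde M_1 > 0$ this $\tilde\varphi$ is smooth and strictly increasing (for $c>0$), hence a diffeomorphism of $\mathbb{R}$. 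For the implication $(\Leftarrow)$, assuming $\tilde M_1$ is $T_1$-periodic I would choose $c = T_2/\int_0^{T_1}\sqrt{\tilde M_1(\tau)}\,d\tau$, so that $\tilde\varphi(x+T_1) - \tilde\varphi(x) = c\int_x^{x+T_1}\sqrt{\tilde M_1} = c\int_0^{T_1}\sqrt{\tilde M_1} = T_2$ for every $x$. Then $\pi_2\circ\tilde\varphi$ is $T_1$-periodic, so $\tilde\varphi$ factors through $\pi_1,\pi_2$ to a diffeomorphism $\varphi:[\mathbb{R}]_{T_1}\to[\mathbb{R}]_{T_2}$ (as in the descent argument of Proposition~\ref{prop:coord_transformations}), which by construction satisfies $\Psi_2^2 = 0$ and hence $\tilde M_2 = 1$.

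For the converse $(\Rightarrow)$, suppose some $\varphi$ achieves $\Psi_2^2 = 0$. Its lift then again satisfies $\tilde\varphi'(x) = \tilde\varphi'(0)\sqrt{\tilde M_1(x)}$ with $\tilde\varphi'(0)\neq 0$. Since $\varphi$ is a circle diffeomorphism of degree $\pm 1$, Proposition~\ref{prop:coord_transformations} gives $\tilde\varphi(x+T_1) = \tilde\varphi(x)\pm T_2$; differentiating yields $\tilde\varphi'(x+T_1) = \tilde\varphi'(x)$, so $\tilde\varphi'$, and therefore $\sqrt{\tilde M_1}$ and $\tilde M_1$, is $T_1$-periodic. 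The step I expect to demand the most care is the descent bookkeeping: one must check that the analytically produced $\tilde\varphi$ is not merely a diffeomorphism of $\mathbb{R}$ but projects to a genuine diffeomorphism of the quotient circles. This hinges precisely on the constancy of $\int_x^{x+T_1}\sqrt{\tilde M_1}$ in $x$, and since its $x$-derivative equals $\sqrt{\tilde M_1(x+T_1)} - \sqrt{\tilde M_1(x)}$, that constancy is equivalent to the $T_1$-periodicity of $\tilde M_1$. This equivalence is the heart of the proposition.
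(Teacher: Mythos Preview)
Your proposal is correct and, for the $(\Leftarrow)$ direction, essentially identical to the paper's: both construct $\tilde\varphi(x) = c\int_0^x \sqrt{\tilde M_1(\tau)}\,d\tau$ with $c = T_2/\int_0^{T_1}\sqrt{\tilde M_1}$, check $\tilde\varphi(x+T_1)=\tilde\varphi(x)+T_2$, and descend to the circles. The paper then verifies $\tilde M_2=1$ by plugging into formula~\eqref{eq:M1_M2_V1_V2}, whereas you conclude $\Psi_2^2=0$ directly from the ODE and infer $\tilde M_2=1$; these are equivalent.

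The $(\Rightarrow)$ direction is where you take a genuinely different route. The paper's argument is a one-liner: $\tilde M_2=1$ is trivially $T_2$-periodic, and Proposition~\ref{prop:coord_transformations} says periodicity of $\tilde M_2$ is equivalent to periodicity of $\tilde M_1$. You instead reuse the ODE characterization: the lift of any $\varphi$ with $\Psi_2^2=0$ must satisfy $\tilde\varphi' = \tilde\varphi'(0)\sqrt{\tilde M_1}$, and the degree-$\pm1$ property forces $\tilde\varphi'$ to be $T_1$-periodic, hence so is $\tilde M_1$. Your argument is slightly longer but more self-contained (it does not rely on the explicit transformation formula~\eqref{eq:M1_M2_V1_V2}), and it has the advantage of making transparent that both directions of the proposition hinge on exactly the same ODE $\tilde\varphi'' + \tilde\Psi_2^1\tilde\varphi'=0$. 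The paper's approach buys brevity by outsourcing the work to the previously established formula.
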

\begin{proof}
	$(\Rightarrow)$ Let $T_2>0$ be arbitrary and $\varphi: [\mathbb{R}]_{T_1} \to [\mathbb{R}]_{T_2}$
	be a diffeomorphism. If $\tilde M_2=1$, then $\tilde M_2$ is
	$T_2$-periodic which by Proposition~\ref{prop:coord_transformations}
	implies that $\tilde M_1$ is $T_1$-periodic.
	
	$(\Leftarrow)$ Let $T_2>0$ be arbitrary, and let $\tilde \varphi: \mathbb{R}
	\to \mathbb{R}$ be defined as
	\[
	\tilde \varphi(x) = \lambda \int_0^x \sqrt{\tilde M_1(\tau) d \tau}, \quad
	\lambda:=\frac{T_2}{\int_0^{T_1} \sqrt{\tilde M_1(\tau)} d \tau}.
	\]
	Since $\inf \tilde \varphi'>0$, $\tilde \varphi$ is a diffeomorphism
	$\mathbb{R} \to \mathbb{R}$. Moreover, $\tilde \varphi'$ is $T_1$ periodic, from
	which it is readily seen that $\tilde \varphi(x+T_1) = \tilde
	\varphi(x) + T_2$. For all $ s \in [\mathbb{R}]_{T_1}$, letting $x \in
	\pi_1^{-1}(s)$, we have
	\[ 
	\pi_2 \circ
	\tilde \varphi \circ \pi_1^{-1}(s)=\pi_2 \circ \tilde \varphi(\{x+l
	T_1: l \in \mathbb{Z}\}) = \pi_2 (\{ \tilde \varphi(x)+lT_2: l \in \mathbb{Z}\}) =
	\pi_2(\varphi(x)).
	\]
	Thus, there exists a smooth function $\varphi: [\mathbb{R}]_{T_1} \to [\mathbb{R}]_{T_2}$ such
	that the diagram~\eqref{eq:comm_diag:tildevarphi} commutes. This
	function is a diffeomorphism because $\tilde \varphi$ is such. By
	Proposition~\ref{prop:coord_transformations}, we have
	\[
	\tilde M_2 (\tilde \varphi(x)) = \frac{\tilde M_1(x)}{\lambda^2 \tilde
		M_1(x)} \frac{\lambda^2 \tilde M_1(0)}{\tilde M_1(0)} =1,
	\]
	proving that $\tilde M_2=1$. By~\eqref{eq:VandM_transformed}, it
	follows that $\tilde \Psi_2^2=0$, and also $\Psi_2^2 =0$.
	\qquad\end{proof}

\subsection{Qualitative properties of the reduced dynamics}
\label{sec:qualitative_properties}

Consider again the reduced dynamics
\begin{equation}\label{eq:sys:3}
\ddot s = \Psi_1 (s) + \Psi_2(s) \dot s^2,
\end{equation}
with state space the cylinder $T[\mathbb{R}]_T$. We now characterize the
qualitative properties of ``typical'' solutions of~\eqref{eq:sys:3}.
%
%
\begin{definition}\label{defn:solution_types}
	A solution $(s(t),\dot s(t))$ of~\eqref{eq:sys:3} is said to be:
	\begin{enumerate}[(i)]
		\item A {\bf rotation} of~\eqref{eq:sys:3} if the set $\gamma =
		\text{Im}((s(\cdot),\dot s(\cdot)))$ is homeomorphic to a circle
		$\{(s,\dot s) \in T [\mathbb{R}]_T : \dot s = \text{constant}\}$ via a vector
		bundle isomorphism of the form $(s,\dot s) \mapsto (s,\mu(s) \dot
		s)$, $\mu \neq 0$.
		
		\item An {\bf oscillation} of~\eqref{eq:sys:3} if $\gamma$ is
		homeomorphic to a circle $\{(s,\dot s) \in T [\mathbb{R}]_T: (s,\dot s) =
		\bar{\pi}(x,\dot x), (x,\dot x) \in T\mathbb{R}, x^2+\dot x^2=\text{constant}\}$
		via a vector bundle isomorphism of the form above.
		
		\item A {\bf helix} of~\eqref{eq:sys:3} if $\gamma$ is homeomorphic to
		the set $\{(s,\dot s) \in T [\mathbb{R}]_T: (s,\dot s) = \bar{\pi}(x,\dot x),
		(x,\dot x) \in T\mathbb{R},\dot x^2 + x=\text{constant}\}$ via a vector
		bundle isomorphism of the form above.
		
	\end{enumerate}
\end{definition}
We now discuss the ``typical'' solutions of EL and SEL systems. The
next result for EL systems is taken
from~\cite[Proposition~4.7]{Maggiore-2012}.
\begin{proposition}[\cite{Maggiore-2012}] \label{prop:EL:qualitative} 
	Suppose that the dynamical system \eqref{eq:sys:3} is EL and let $V, M
	: [\mathbb{R}]_T \to \mathbb{R}$ be the unique smooth functions such that $\tilde V = V
	\circ \pi$, $\tilde M = M \circ \pi$, with $\tilde V,\tilde M$ defined
	in~\eqref{eq:M_V}. Let $\underbar{V} = \min_{x \in [0,T]} \tilde
	V(x)$, $\bar V = \max_{x \in [0,T]} \tilde V(x)$.  Then, all solutions
	of \eqref{eq:sys:3} in the set $\{(s,\dot{s})\in T[\mathbb{R}]_T:
	1/2M(s)\dot{s}^2 + V(s)> \overline{V}\}$ are rotations, and almost all
	(in the Lebesgue sense) solutions of \eqref{eq:sys:3} in the set
	$\{(s,\dot{s})\in T [\mathbb{R}]_T: \underline{V}<1/2M(s)\dot{s}^2 +
	V(s)<\overline{V}\}$ are oscillations.
\end{proposition}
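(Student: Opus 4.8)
The plan is to exploit the conserved energy of the mechanical system together with a phase-plane analysis on the cylinder, using Sard's theorem to dispatch the exceptional solutions. Since \eqref{eq:sys:3} is EL, Theorem~\ref{thm:ILPsolution:part2} guarantees that it is mechanical, with Lagrangian $L(s,\dot s) = (1/2)M(s)\dot s^2 - V(s)$, $M>0$, where $M,V:[\mathbb{R}]_T \to \mathbb{R}$ are the smooth functions satisfying $\tilde M = M\circ\pi$ and $\tilde V = V\circ\pi$. I would introduce the energy $E(s,\dot s) = (1/2)M(s)\dot s^2 + V(s)$ and verify, by differentiating along \eqref{eq:sys:3}, that $\dot E \equiv 0$, so that every orbit lies in a single level set $E^{-1}(c)$. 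Solving for the fiber coordinate then gives $\dot s = \pm g_c(s)$ with $g_c(s) = \sqrt{2(c - V(s))/M(s)}$ on the region $\{V \le c\}$.

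Next I would treat the high-energy case $c > \bar V$. Here $c - V(s) \ge c - \bar V > 0$ for every $s$, so $g_c$ is smooth and strictly positive on the whole base circle, and the orbit is the full graph $\{(s, \pm g_c(s)) : s \in [\mathbb{R}]_T\}$. Since $\dot s$ never vanishes, $s(t)$ is strictly monotone and the orbit winds once around the cylinder. Setting $\mu(s) := 1/g_c(s)$, which is smooth, nonvanishing and $T$-periodic, the vector bundle isomorphism $(s,\dot s)\mapsto(s,\mu(s)\dot s)$ carries the orbit onto $\{\dot s = \pm 1\}$, so by Definition~\ref{defn:solution_types} every such orbit is a rotation. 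Because a rotation requires $\dot s \neq 0$ everywhere, no rotations occur for $c < \bar V$.

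Then I would analyze the intermediate band $\underline{V} < c < \bar V$. The critical points of $E$ are exactly the points $(s^*,0)$ with $V'(s^*)=0$, since $\partial E/\partial\dot s = M(s)\dot s$ and $M>0$; hence the critical values of $E$ coincide with those of $V$, and by Sard's theorem applied to $V:[\mathbb{R}]_T\to\mathbb{R}$ they form a measure-zero set $B\subset\mathbb{R}$. As $E$ is a submersion off the zero section $\{\dot s=0\}$ (which is itself of measure zero in the cylinder), the preimage $E^{-1}(B)$ has measure zero. Every equilibrium lies on the zero section, and every separatrix lies in a level set of critical energy, so all non-oscillating orbits in the band are contained in the measure-zero set $E^{-1}(B)\cup\{\dot s=0\}$. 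For a regular value $c\in(\underline{V},\bar V)$ the set $\{V\le c\}$ is a proper, nonempty, finite union of closed arcs (the crossings $V=c$ being transverse); over each arc $[a,b]$ the level set is the smooth closed loop $\{(s,\pm g_c(s)): s\in[a,b]\}$, which is contractible in the cylinder because it is confined to a proper arc. A fiber rescaling built from $g_c$ brings this loop to the standard form in Definition~\ref{defn:solution_types}(ii), identifying it as an oscillation, so almost every orbit in the band is an oscillation.

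The main obstacle I anticipate is the measure-theoretic bookkeeping in the intermediate band: one must confirm that the exceptional orbits are genuinely confined to $E^{-1}(B)\cup\{\dot s=0\}$ and that this set has planar measure zero, which rests on $E$ being a submersion away from the zero section together with Sard's theorem. A secondary technical point is verifying that each regular level component is truly contractible and matched to the standard oscillation by a fiber-preserving isomorphism, rather than winding around the cylinder; this is where the condition $c<\bar V$, forcing the arc $\{V\le c\}$ to be proper, does the essential work.
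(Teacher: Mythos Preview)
The paper does not supply its own proof of Proposition~\ref{prop:EL:qualitative}; it merely cites~\cite[Proposition~4.7]{Maggiore-2012} and moves on. So there is nothing in the paper to compare your proposal against line by line.

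That said, your proposal is sound and is exactly the natural argument. The conserved energy $E=(1/2)M\dot s^2+V$ reduces the problem to a phase-plane analysis on the cylinder; the case $c>\bar V$ gives graphs of nonvanishing functions over the whole base circle, which your fibre rescaling $\mu=1/g_c$ carries to $\{\dot s=\pm1\}$, matching Definition~\ref{defn:solution_types}(i); and for $\underline V<c<\bar V$ the projection of $E^{-1}(c)$ to the base is the proper subset $\{V\le c\}$, forcing every component to be a contractible loop. Your use of Sard's theorem on $V$ to confine the exceptional orbits to a null set is correct, and your observation that $E$ is a submersion away from the critical points $\{(s,0):V'(s)=0\}$ (themselves a null set) is the right way to pass from a null set of energy values to a null set of initial conditions. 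This is precisely the mechanism the paper does spell out in the neighbouring proof of Proposition~\ref{prop:SEL:qualitative}, so your approach is fully in the spirit of the paper.

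One cosmetic point: in Definition~\ref{defn:solution_types} the bundle isomorphism fixes the base coordinate, so strictly speaking you should check that the $s$-range of your loop can be matched to the $s$-range of a model circle $x^2+\dot x^2=\text{const}$; the paper itself defers this detail to~\cite[Lemma~3.12]{Consolini-2010}, so you are not being held to a higher standard, but if you want a self-contained write-up you could note that by choosing the radius of the model circle equal to half the length of the arc $[a,b]$ (after a base translation, which is also a vector bundle isomorphism) and then rescaling the fibre, the identification goes through.
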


Next, a new result concerning SEL systems.
\begin{proposition}\label{prop:SEL:qualitative}
	Suppose that the dynamical system \eqref{eq:sys:3} is SEL. Then,
	almost all solutions of~\eqref{eq:sys:3} are either oscillations or helices.
\end{proposition}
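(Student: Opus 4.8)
The plan is to reduce the statement for the cylinder system~\eqref{eq:sys:3} to a statement about the lifted system~\eqref{eq:sys:lift} on $T\mathbb{R}$, where the dynamics are governed by the explicit integral of motion $\tilde E_0(x,\dot x) = (1/2)\tilde M(x)\dot x^2 + \tilde V(x)$. Since~\eqref{eq:sys:3} is SEL, Proposition~\ref{prop:SEL} tells me that $\tilde M$ is $T$-periodic (hence bounded away from zero and bounded above on $\mathbb{R}$) while $\tilde V$ is not periodic. By Lemma~\ref{lem:MVprop} with $\tilde M(T)=1$, I have $\tilde V(x+nT) = \tilde V(x) + n\tilde V(T)$ with $\tilde V(T)\neq 0$, so $\tilde V$ is strictly monotone-on-average and $\tilde V(x)\to\pm\infty$ as $x\to\pm\infty$, with opposite signs at the two ends. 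The key observation is that trajectories of the lifted system are exactly the level sets $\{\tilde E_0 = c\}$, and I want to understand, for almost every $c$, the topology of the projection $\bar\pi(\{\tilde E_0=c\})$ down to the cylinder.

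First I would analyze the level sets of $\tilde E_0$ on $T\mathbb{R}$. Because $\tilde M>0$ is periodic and $\tilde V\to\mp\infty$, for each fixed $x$ the map $\dot x\mapsto \tilde E_0(x,\dot x)$ is a positive-definite quadratic with vertex value $\tilde V(x)$. A level set $\{\tilde E_0 = c\}$ is nonempty in the fibre over $x$ iff $c\ge \tilde V(x)$, i.e. over the sublevel set $\{x : \tilde V(x)\le c\}$. Since $\tilde V$ tends to $-\infty$ on one side (say $x\to +\infty$) and $+\infty$ on the other, for every value of $c$ the sublevel set $\{\tilde V\le c\}$ is a half-line of the form $[\,x_c,\infty)$ up to the bounded oscillations of $\tilde V$ about its linear trend; generically (for $c$ a regular value of $\tilde V$, which excludes only a measure-zero set by Sard's theorem) the boundary is a single point $x_c$ where $\tilde V(x_c)=c$ and $\tilde V'(x_c)\ne 0$. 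Over such a half-line the level set is a curve that starts at $(x_c,0)$ and runs off to $x=+\infty$ with $\dot x^2$ growing, i.e. it is an unbounded arc in $T\mathbb{R}$ homeomorphic to $\mathbb{R}$, with the two branches $\dot x=\pm\sqrt{2(c-\tilde V(x))/\tilde M(x)}$ meeting at the single turning point $(x_c,0)$.

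Next I would push this picture down to the cylinder via $\bar\pi$. The image $\bar\pi(\{\tilde E_0=c\})$ is the trajectory $\gamma$ of the reduced system. I must determine whether $\gamma$ is an oscillation or a helix in the sense of Definition~\ref{defn:solution_types}. The dichotomy should hinge on whether the unbounded arc, after projection modulo $T$ in the $x$-direction, closes up into a circle or wraps around the cylinder infinitely: the turning-point arc that goes off to $x=+\infty$ monotonically in $\dot x^2$ will wind around the cylinder without reclosing, yielding a helix, whereas if the level set were a bounded closed loop in $T\mathbb{R}$ it would project to an oscillation. I expect to show that, because of the turning point at $(x_c,0)$ combined with the one-sided escape $x\to+\infty$, the generic level set is exactly the model helix $\{\dot x^2 + x = \text{const}\}$ up to a fibre-preserving isomorphism $(s,\dot s)\mapsto (s,\mu(s)\dot s)$ (using $\mu(s)=\sqrt{\tilde M}$ to absorb the nonconstant mass), while the remaining trajectories in the bounded oscillatory regions of $\tilde V$ give oscillations.

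The main obstacle I anticipate is the bookkeeping of which level values $c$ give which solution type, and handling the non-generic values cleanly. The function $\tilde V$ is not monotone — it has bounded periodic fluctuations superimposed on a linear drift — so for certain values of $c$ the sublevel set $\{\tilde V\le c\}$ can have several components (a bounded ``well'' to the left together with the unbounded half-line to the right), and these extra components produce oscillatory closed loops. I would argue that the set of $c$ for which the well structure degenerates (critical values of $\tilde V$, and values where a bounded component merges with the unbounded one) is a closed set of measure zero, invoking Sard's theorem together with the periodic-plus-linear structure of $\tilde V$ from Lemma~\ref{lem:MVprop}; for all other $c$ the trajectory is cleanly either an oscillation (inside a bounded well) or a helix (on the escaping branch). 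Establishing the required homeomorphism to the model sets in Definition~\ref{defn:solution_types}, through an explicit fibre-preserving bundle isomorphism, is then a routine verification, and the measure-zero exceptional set of $c$ is exactly what the phrase ``almost all solutions'' in the statement accommodates.
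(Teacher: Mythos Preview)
Your strategy is essentially the paper's: lift to $T\mathbb{R}$, use the integral $\tilde E_0$, invoke Sard on $\tilde V$ to restrict to regular energy values, and split the sublevel set $\{\tilde V\le c\}$ into one unbounded component (giving the helix) and finitely many bounded intervals (giving oscillations). The only organizational difference is that the paper first applies Proposition~\ref{prop:conservative} to replace~\eqref{eq:sys:3} by a diffeomorphic conservative system $\ddot s=\Psi(s)$ with $\tilde M\equiv 1$, and only then carries out the level-set analysis; this buys a slightly cleaner computation but is not essential.

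Two points in your write-up need correction. First, your claim that for a regular value $c$ the boundary of $\{\tilde V\le c\}$ is ``a single point $x_c$'' is wrong: even for regular $c$, the periodic fluctuation of $\tilde V$ about its linear drift typically produces a finite set $\{x_0,\ldots,x_N\}=\tilde V^{-1}(c)$, so that $\{\tilde V\le c\}$ decomposes as one half-line together with several bounded intervals. You partly acknowledge this in your last paragraph, but your earlier description of the level set as a single arc with one turning point is incorrect and should be replaced by this decomposition from the outset. Second, the fibre rescaling $\mu(s)=\sqrt{M(s)}$ does not by itself send your energy level set to the model helix $\{\dot x^2+x=\text{const}\}$; after that rescaling you still have $(1/2)\dot y^2+\tilde V(x)=c$, not $(1/2)\dot y^2+x=\text{const}$. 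The paper's explicit homeomorphism on the unbounded branch is $(x,\dot x)\mapsto\big(x,\dot x\sqrt{(x_0-x)/(2(h-\tilde V(x)))}\big)$, which simultaneously absorbs the mass and straightens $\tilde V$; in your setting the analogous map is $(x,\dot x)\mapsto\big(x,\dot x\sqrt{\tilde M(x)(x_0-x)/(2(c-\tilde V(x)))}\big)$.
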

\begin{proof}
	Since~\eqref{eq:sys:3} is a SEL system, by
	Proposition~\ref{prop:conservative} it is diffeomorphic to a one DOF
	conservative system
	\begin{equation}
	\label{eq:sys:unit}
	\ddot{s}=\Psi(s)
	\end{equation}
	with state space $T [\mathbb{R}]_T$, whose associated virtual potential $\tilde
	V(x)=-\int_0^x \tilde \Psi(\tau)d \tau$ (where $\tilde \Psi = \Psi
	\circ \pi$) is {\em not} $T$-periodic, i.e., $\tilde V(T) \neq \tilde
	V(0)=0$.  The lifted system is given by
	\begin{equation}
	\label{eq:sys:unit:lifted}
	\ddot x = \tilde \Psi(x).
	\end{equation}
	In light of Lemma~\ref{thm:lem1}, the solutions of
	systems~\eqref{eq:sys:unit} and~\eqref{eq:sys:unit:lifted} are
	$\bar{\pi}$-related, and to prove the proposition it suffices to show that
	almost all solutions of~\eqref{eq:sys:unit:lifted} are either closed
	curves homeomorphic to $\{(x,\dot x): x^2 + \dot x^2 =
	\text{constant}\}$ or open curves homeomorphic to parabolas $\{(x,\dot
	x) : x + \dot x^2 = \text{constant}\}$. Without loss of generality, we
	assume that $\tilde V(T)>0$. By Lemma~\ref{lem:MVprop}, $\tilde
	{V}(x+nT)= \tilde{V}(x)+n\tilde{V}(T)$ for all $x\in \mathbb{R}$ and all
	$n\in \mathbb{Z}$, implying that $\tilde{V}:\mathbb{R}\to\mathbb{R}$ is onto. Each
	phase curve of \eqref{eq:sys:unit} lies entirely in a level set of
	$\tilde E_0(x,\dot x)=1/2\dot x^2+\tilde V(x)$. 
	%
	%
	By Sard's theorem~\cite{GuiPol:74}, for almost all $h \in \mathbb{R}$,
	$\tilde V' \neq 0$ on the set $\tilde V^{-1}(h)$, which implies that
	the set $\tilde E_0^{-1}(h)$ does not contain equilibria.  Moreover,
	since $\tilde V$ is onto, $\tilde V^{-1}(h)$ is non-empty. Let $h$ be
	such that $\tilde V'\neq 0$ on $\tilde V^{-1}(h)$, and consider the
	set $\Omega_h=\{x\in\mathbb{R}: \tilde V(x) \leq h\}$.  Let
	$\{x_0,\ldots,x_N\}:=\tilde V^{-1}(h)$ be ordered so that $x_i <
	x_{i+1}$.  The sequence is finite since the continuity of $\tilde V$
	and the fact that $\tilde V(x) \to \pm\infty$ as $x \to \pm\infty$
	imply that $x_0 = \inf \tilde V^{-1}(h)$ and $x_N = \sup \tilde
	V^{-1}(h)$ exist and are finite. For all $x < x_0$, $\tilde V(x) < h$, for
	otherwise it would hold that $\inf \tilde V^{-1}(h) < x_0$. Moreover,
	since $\tilde V' \neq 0$ on the set $\tilde V^{-1}(h)$, it follows
	that $\Omega_h$ is the union of disjoint intervals with nonzero
	measure. This latter fact implies that $\Omega_h = (-\infty,x_0]
	\bigcup [x_1,x_2] \bigcup \cdots \bigcup [x_{N-1},x_N]$.  Now we
	apply the classical theory of one DOF conservative
	systems~\cite{ArnoldClassicalMech}, from which we conclude that the
	energy level set $\tilde E_0^{-1}(h)$ is the union of $N+1$
	trajectories. On each band $[x_{2i-1}, x_{2i}] \times \mathbb{R}$,
	$i=1,\ldots, N/2$, the set $\tilde E_0^{-1}(h) \cap \big( [x_{2i-1},
	x_{2i}] \times \mathbb{R}\big)$ is a closed curve homeomorphic to a
	circle $x^2 + \dot x^2 =\text{constant}$ (see also the proof of
	Lemma~3.12 in~\cite{Consolini-2010}). On the band $(-\infty,x_0]
	\times \mathbb{R}$, the set $\tilde E_0^{-1}(h) \cap ((-\infty,x_0]
	\times \mathbb{R})$ is homeomorphic to a parabola $\{(x,\dot x): x+\dot
	x^2=x_0\}$ via the homeomorphism $(x,\dot x) \mapsto \big( x,
	\dot x \sqrt{(-x+x_0)/{2( h - \tilde V(x))}}
	\big)$. \qquad\end{proof}

\begin{remark}
	By virtue of Propositions~\ref{prop:EL:qualitative}
	and~\ref{prop:SEL:qualitative}, EL and SEL systems cannot possess
	limit cycles or asymptotically stable equilibria. Typical solutions of
	an EL system are rocking motions (oscillations) or complete
	revolutions of $s$ (rotations). Typical solutions of a SEL system are
	complete revolutions of $s$ with either a periodic speed profile
	(oscillations) or monotonically increasing or decreasing speed profiles (helices).
\end{remark}

We conclude this section with a slight extension of a result
in~\cite[Proposition 4.1]{Consolini-2012} which shows that certain
systems of the form~\eqref{eq:sys:3} which have no Lagrangian structure
(i.e., they are neither EL nor SEL) possess exponentially stable limit
cycles.
\begin{proposition}[\cite{Consolini-2012}]\label{prop:limit_cycle}
	Consider the dynamical system~\eqref{eq:sys:3}, and assume that either
	$\Psi_1>0$ and $\int_0^T \tilde \Psi_2(\tau) d \tau<0$ or $\Psi_1<0$
	and $\int_0^T \tilde \Psi_2(\tau) d \tau>0$. Define the $T$-periodic
	smooth function $\tilde \nu:\mathbb{R} \to \mathbb{R}$ as
	\[
	\tilde{\nu}(x)=\mbox{sgn}({\Psi}_1)\sqrt{\frac{-2\tilde{M}^{-1}(x)[\tilde{V}(x+T)-\tilde{V}(x)]}{\tilde{M}(T)-1}},
	\]
	and let $\nu: [\mathbb{R}]_T \to \mathbb{R}$ be the unique smooth function such that
	$\tilde \nu = \nu \circ \pi$.  Then the closed orbit ${\cal R} =
	\{(s,\dot{s})\in T[\mathbb{R}]_T\times: \dot{s}=\nu(s)\}$ is exponentially stable
	for~\eqref{eq:sys:3}, with domain of attraction containing the set
	$\mathcal{D}=\{(s,\dot{s})\in T[\mathbb{R}]_T: \mbox{sgn}(\Psi_1)\dot{s}\geq
	0\}$.
\end{proposition}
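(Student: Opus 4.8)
The plan is to reduce the second-order equation~\eqref{eq:sys:3} to a scalar \emph{linear} equation in the squared velocity, and then to read off the closed orbit $\mathcal{R}$ as the fixed point of an affine first-return map. I would first treat the case $\Psi_1>0$, $\int_0^T\tilde\Psi_2\,d\tau<0$; the case $\Psi_1<0$ is entirely symmetric, with the roles of increasing and decreasing $s$ interchanged, and this splitting into the two sign regimes is exactly the ``slight extension'' over the source result. The key preliminary observation is that $\mathcal{D}=\{(s,\dot s):\sign(\Psi_1)\dot s\ge 0\}$ is positively invariant: on its boundary $\dot s=0$ one has $\ddot s=\Psi_1(s)>0$, so the vector field points strictly into $\{\dot s>0\}$, and a trajectory that has entered $\{\dot s>0\}$ can never again reach $\dot s=0$, since that would force $\ddot s\le 0$ there. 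Hence in the interior $s$ is strictly increasing, and I may use $s$ (equivalently its lift $x$) as independent variable. Writing $w=\dot x^2$ along a lifted trajectory, equation~\eqref{eq:sys:3} becomes $dw/dx=2\tilde\Psi_1(x)+2\tilde\Psi_2(x)w$, and multiplication by the integrating factor $\tilde M(x)$ gives $\tfrac{d}{dx}\!\left[\tilde M(x)w\right]=-2\tilde V'(x)$, using $\tilde M'=-2\tilde\Psi_2\tilde M$ and $\tilde V'=-\tilde\Psi_1\tilde M$ from~\eqref{eq:M_V}.

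Next I would construct the first-return map. Fixing a base point $x_0$ over $s_0$, integrating the above identity from $x_0$ to $x_0+T$ yields $\tilde M(x_0+T)w(x_0+T)-\tilde M(x_0)w(x_0)=-2[\tilde V(x_0+T)-\tilde V(x_0)]$. Invoking the scaling relation $\tilde M(x_0+T)=\tilde M(T)\tilde M(x_0)$ from Lemma~\ref{lem:MVprop}, the Poincar\'e map $P:w(x_0)\mapsto w(x_0+T)$ on the section $\{s=s_0\}$ is affine,
\[
P(w)=\frac{1}{\tilde M(T)}\,w-\frac{2[\tilde V(x_0+T)-\tilde V(x_0)]}{\tilde M(T)\,\tilde M(x_0)},
\]
with constant slope $P'\equiv\tilde M(T)^{-1}$ and unique fixed point $w^\ast=-2\tilde M^{-1}(x_0)[\tilde V(x_0+T)-\tilde V(x_0)]/(\tilde M(T)-1)=\tilde\nu(x_0)^2$, which is precisely the point of $\mathcal{R}$ over $s_0$. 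Two sign checks close this step: since $\Psi_1>0$ gives $\tilde V(x_0+T)-\tilde V(x_0)=\int_{x_0}^{x_0+T}(-\tilde\Psi_1\tilde M)\,d\tau<0$, while $\int_0^T\tilde\Psi_2\,d\tau<0$ gives $\tilde M(T)=\exp(-2\int_0^T\tilde\Psi_2\,d\tau)>1$, the fixed point $w^\ast$ is strictly positive (so $\tilde\nu$ is real and $w^\ast>0$), and the multiplier satisfies $0<\tilde M(T)^{-1}<1$.

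Finally, the discrete contraction transfers to the flow. The section $\{s=s_0\}$ is globally transverse because $\dot s>0$ in the interior, the return time is bounded by periodicity, and $P$ has a hyperbolic fixed point with Floquet multiplier $\tilde M(T)^{-1}\in(0,1)$; the standard Poincar\'e-map theorem then makes $\mathcal{R}$ exponentially orbitally stable. Since $P$ is a globally defined affine contraction of $[0,\infty)$ into $(0,\infty)$ with fixed point $w^\ast$, every trajectory with $w(x_0)>0$ obeys $w(x_0+nT)\to w^\ast$ geometrically; combined with the invariance of $\mathcal{D}$ and the fact that any boundary point $\dot s=0$ instantly enters $\{\dot s>0\}$ (as $\ddot s=\Psi_1>0$), this shows the domain of attraction contains all of $\mathcal{D}$. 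I expect the main obstacle to be the bookkeeping that converts the contraction of the $w$-return map into genuine continuous-time exponential orbital stability---specifically, verifying that interior trajectories wind around the cylinder infinitely often, so that $P$ iterates indefinitely, and that $w$ stays bounded away from $0$, which follows because $dw/dx\approx 2\Psi_1>0$ for small $w$, so the line $\{w=0\}$ repels.
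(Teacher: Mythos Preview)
Your argument is correct and is precisely the approach the paper has in mind: the paper omits the proof, stating it is ``almost identical to the proof of Proposition~4.1 in~\cite{Consolini-2012},'' with the only novelty being the explicit formula for the limit cycle via Lemma~\ref{lem:MVprop}. Your reduction to the linear equation $\tfrac{d}{dx}[\tilde M w]=-2\tilde V'$ in $w=\dot x^2$, the affine Poincar\'e map with multiplier $\tilde M(T)^{-1}\in(0,1)$, and the identification of the fixed point $w^\ast=\tilde\nu(x_0)^2$ using the scaling relation $\tilde M(x_0+T)=\tilde M(T)\tilde M(x_0)$ from Lemma~\ref{lem:MVprop} reproduce exactly that strategy, including the use of the lemma that the paper singles out as the new ingredient.
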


We omit the proof of this result, since it is almost identical to the
proof of Proposition~4.1 in~\cite{Consolini-2012}. The element of
novelty here is the explicit determination of the limit cycle $\dot s
= \nu(s)$ which is made possible by Lemma~\ref{lem:MVprop}. This latter
result can also be used to show that $\tilde \nu(x)$ is a $T$-periodic
function.

\begin{remark}
	Proposition~\ref{prop:limit_cycle} shows that, generally, the flow of
	the reduced dynamics induced by a VHC does not preserve volume. This
	is in contrast with the flow of Hamiltonian systems which, according
	to the Liouville-Arnold theorem~\cite{ArnoldClassicalMech}, preserves
	volume. Moreover, the sufficient conditions of the proposition are
	expressed in terms of strict inequalities involving continuous
	functions and, as such, they persist under small perturbations of the
	vector field in~\eqref{eq:sys:3}. In other words, the existence of
	stable limit cycles is not an ``exceptional''
	phenomenon. In~\cite{Consolini-2012}, it was shown that the reduced
	dynamics of a bicycle traveling along a closed curve and subject to a
	regular VHC meet the conditions of Proposition~\ref{prop:limit_cycle}.
\end{remark}

\section{Examples}\label{sec:examples}

We now present a number of examples illustrating the results of this
paper. Later, we return to the material particle example of
Section~\ref{sec:introductory_example} and analyze its Lagrangian
structure using Theorems~\ref{thm:ILPsolution:part2}
and~\ref{thm:ILPsolution:part3}.
\begin{example}
	Consider the system
	\[
	\ddot{s} = \frac{1}{2+\cos(s)}[\sin(2s)-\sin(s)\dot{s}^2],
	\]
	where $s\in[\mathbb{R}]_{2\pi}$. The virtual mass and potential are given by
	$\tilde M(x)=9/(\cos x+2)^2$ and $\tilde V(x)=4-18(\cos x+1)/(\cos
	x+2)^2$. Since $\tilde M$ and $\tilde V$ are $2\pi$-periodic, by
	Theorem~\ref{thm:ILPsolution:part2} the system is EL and
	mechanical. By Proposition~\ref{prop:EL:qualitative}, almost all solutions
	are either oscillations or rotations. Figure~\ref{fig:EL} shows the
	phase portrait of the system and two phase curves of the system on the
	phase cylinder $[\mathbb{R}]_{2\pi}\times\mathbb{R}$ corresponding to an oscillation
	and a rotation. 
	\begin{figure}
		\label{fig:EL}
		\includegraphics[scale=0.4]{./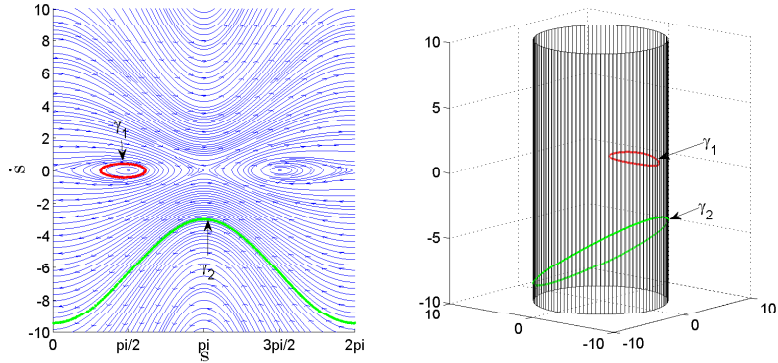}
		\caption{Left: Phase portrait of an EL system. Right: An oscillation
			($\gamma_1$) and a rotation ($\gamma_2$) on the cylinder.}
	\end{figure}
\end{example}

\begin{example}
	For the system
	\[
	\ddot{s} = \cos(s)+0.5+\cos(s)\dot{s}^2,
	\]
	where $s\in[\mathbb{R}]_{2\pi}$, we have 
	\[
	\tilde M(x) = \exp \Big(-2 \int_0^x \tilde \Psi_2(\tau) d \tau \Big) =
	\exp\big(-2 \int_0^x \cos \tau d \tau \big) = \exp ( -2 \sin x),
	\]
	is $2 \pi$-periodic. On the other hand, one can check that $\tilde V(2
	\pi) = -\int_0^{2\pi} (\cos \tau+0.5) \exp(-2 \sin \tau) d \tau \simeq
	7.1615 \neq 0$, so that $\tilde V$ is not $2 \pi$-periodic. By
	Theorem~\ref{thm:ILPsolution:part3}, the system is SEL. By
	Proposition~\ref{prop:SEL:qualitative}, almost all its solutions are
	either oscillations or helices.  Figure~\ref{fig:SEL} shows the phase
	portrait and two typical phase curves on the cylinder, an oscillation
	and a helix.
	\begin{figure}
		\label{fig:SEL}
		\includegraphics[scale=0.4]{./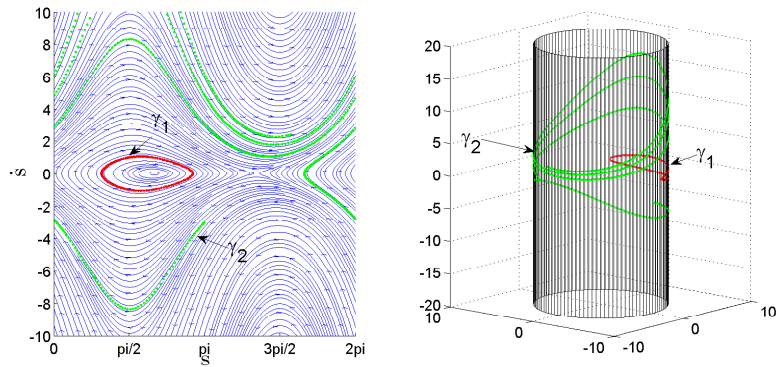} 
		\caption{Left: Phase portrait of a SEL system. Right: An oscillation
			($\gamma_1$) and a helix ($\gamma_2$) on the cylinder.}
	\end{figure}
\end{example}

\begin{example}\label{ex:SEL_contained}
	For the system $\ddot{s}=\lambda$, with $\lambda \neq 0$ and $s \in
	[\mathbb{R}]_T$, we have $\tilde{M}(x)=1$ and $\tilde{V}(x)=-\lambda x$. Since
	$\tilde M$ is $T$ periodic and $\tilde V$ isn't, the system is
	SEL. By Theorem~\ref{thm:ILPsolution:part3}, the Lagrangian is given
	by~\eqref{eq:Lagrangian:SEL}. The Euler-Lagrange equation with this
	Lagrangian reads
	\[
	\ELeq{\tilde{L}}{x} = \frac{2\pi}{\lambda T} \cos\Big(\frac{2\pi}{\lambda T}(\dot{x}^2/2-\lambda x)\Big)(\ddot{x}-\lambda)=0.
	\]
	We see that all solutions of the system $\ddot s = \lambda$ satisfy
	the Euler-Lagrange equation, but there are signals $(x(t),\dot x(t)) =
	(T/4 + kT,0)$, $ k \in \mathbb{Z}$ satisfying the Euler-Lagrange equation
	which do not satisfy the equation $\ddot s=\lambda$. Thus, the
	collection of solutions of a SEL system is contained, but is not equal
	to, the collection of solutions of the associated Euler-Lagrange
	equation.
\end{example}

\begin{example} Consider the system
	\[
	\dot{s} = -\cos(s) - 2 + (\sin(s) + 2)\dot{s}^2
	\]
	with $s \in [\mathbb{R}]_{2 \pi}$. We have $\Psi_1(s) = -\cos( s) -2 <0$ and
	$\int_0^{2\pi} \tilde \Psi_2(\tau) d \tau = \int_0^{2\pi} (\sin \tau
	+2) d \tau = 4 \pi>0$. This latter identity implies that $\tilde M(2
	\pi) \neq 0$, so that $\tilde M$ is not $2 \pi$-periodic, and the
	system is neither EL nor SEL. Moreover, by
	Proposition~\ref{prop:limit_cycle} the system has an exponentially
	stable limit cycle with domain of attraction including
	$\mathcal{D}=\{(s,\dot{s})\in T [\mathbb{R}]_{2 \pi}: \dot{s}\leq
	0\}$. Figure~\ref{fig:non-EL} depicts the phase portrait of the system
	along with the stable limit cycle.
	\begin{figure}
		\label{fig:non-EL}
		\includegraphics[scale=0.4]{./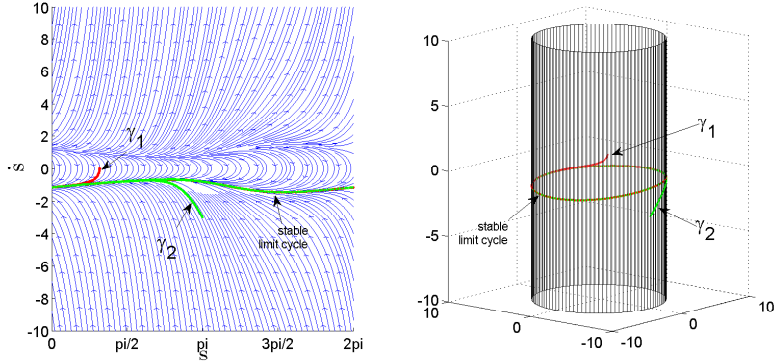}
		\caption{Left: Phase portrait of a non EL system with an attractive
			limit cycle. Right: Two phase curves and the stable limit cycle of
			the system on the phase cylinder.}
	\end{figure}
\end{example}

\begin{example}
	We return to the particle mass example of
	Section~\ref{sec:introductory_example}, in which $s \in [\mathbb{R}]_{2 \pi}$
	and
	\[
	\begin{aligned}
	& \Psi_1(s) = - \frac{\left(a_1 b_2 + a_2 b_1 -a_1 \sin(s)+a_2
		\cos(s)\right)\left(b_1 \cos(s)+b_2 \sin(s) +1\right)}{\left[(b_1
		- a_1 + \cos(s))^2 + (b_2 - a_2 + \sin(s))^2\right]^{3/2}} \\
	& \Psi_2(s) = -\frac{b_1 \sin(s) + b_2 \cos(s)}{b_1 \cos(s) + b_2
		\sin(s) +1},
	\end{aligned}
	\]
	where $a_i,b_i$ are the components of $a,b \in \mathbb{R}^2$.  We now revisit
	the four cases discussed in Section~\ref{sec:introductory_example}.
	
	{\bf Case 1: $a =b=0$.} In this case the reduced dynamics reads as
	$\ddot s=0$, an EL system.
	
	{\bf Case 2: $a=0,b \neq 0$.} Here we have $\Psi_1=0$, implying that
	$\tilde V$ is $2 \pi$-periodic. Moreover, one can check that $\tilde
	M(x) = (4+\cos x)^2/25$, a $2\pi$-periodic function. Thus the reduced
	dynamics are EL.  In this case, the Lagrangian function $L(s,\dot s) =
	1/2 M(s) \dot s^2$ is not equal to the restriction of the Lagrangian
	of the particle mass, $\mathcal{L}(q,\dot q) = (1/2) \|\dot q\|^2 - P(q)$ to
	the constraint manifold.
	
	{\bf Case 3: $a=[1/4 \ \ 3/4]^\top, b=[3/4 \ \ 0]^\top$.} In this
	case $\Psi_2(s)$ is the same as in case 2, but now $\Psi_1(s)\neq
	0$. While $\tilde M$ is $2 \pi$-periodic, one can check that $\tilde
	V(2 \pi) = 0.2762 \neq 0$. The virtual potential is not
	$2\pi$-periodic and thus the system is
	SEL. Figure~\ref{fig:particle:SEL} shows two typical solutions on the
	cylinder, an oscillation and a helix.
	\begin{figure}
		\psfrag{s}{$s$}
		\psfrag{t}{$\dot s$}
		\centerline{\includegraphics[width=.95\textwidth]{./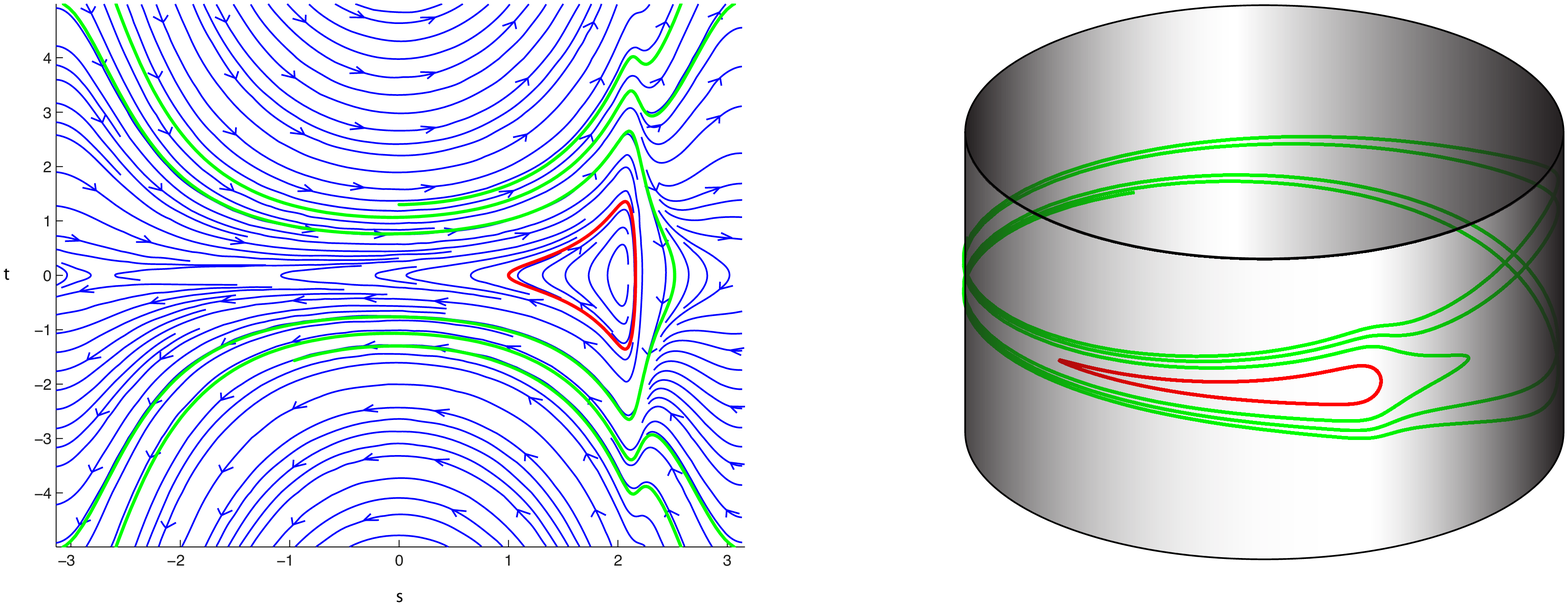}}
		\caption{Left: phase portrait of the particle mass example in case 3. Right: 
			an oscillation and a helix on the phase cylinder.}
		\label{fig:particle:SEL}
	\end{figure}

	{\bf Case 4: $a=b=0, B(q) = R_\theta q$, $\theta \in (-\pi/2,\pi/2)$,
		$\theta \neq 0$.} In this case, the reduced dynamics read as
	\[
	\ddot s = \frac{\tan \theta } 5 - (\tan \theta) \ddot s^2.
	\]
	We have $\tilde M(x) = \exp(-2 \int_0^x -\tan (\theta) d \tau) =
	\exp(2 (\tan \theta) x)$. This is not $2 \pi$-periodic and thus the
	reduced dynamics is neither EL nor SEL.  In sum, arbitrarily small
	variations of the parameters $a,b,\theta$ have drastic effects on the
	Lagrangian properties of the reduced dynamics of the particle.
\end{example}



\bibliography{PhDBiblio}
\end{document}